\def\r{\rightarrow}
\newcommand{\fdem}{\hspace*{\fill}~$\Box$\par\endtrivlist\unskip}
\newcommand{\E}{\mathbb{E}}     
\renewcommand{\P}{\mathbb{P}}     
\renewcommand{\L}{\mathbb{L}}
\newcommand{\N}{\mathbb{N}}     
\newcommand{\Z}{\mathbb{Z}}
\newcommand{\R}{\mathbb{R}}     
\newcommand{\C}{\mathbb{C}} 
\newcommand{\X}{\mathbb{X}}
\newcommand{\V}{\mathbb{V}}
\renewcommand{\dim}{\mathop{\rm dim}}
\renewcommand{\ker}{\mathop{\rm Ker}}
\renewcommand{\r}{\mathop{\rightarrow}}
\newcommand{\cB}{\mbox{$\cal B$}}
\newcommand{\cC}{\mbox{$\cal C$}}
\newcommand{\cL}{\mbox{$\cal L$}}
\newcommand{\cM}{\mbox{$\cal M$}}
\newcommand{\cN}{\mbox{$\cal N$}}
\newcommand{\cR}{\mbox{$\cal R$}}
\newcommand{\cX}{\mbox{$\cal X$}}
\newcommand{\cU}{\mbox{$\cal U$}}
\newcommand{\cV}{\mbox{$\cal V$}}
\newtheorem{theo}{Theorem}
\newtheorem{pro}{Proposition}
\newenvironment{proof}[1]{\textit{Proof#1.\,}}{\fdem}
\newtheorem{lem}{Lemma}
\newtheorem{rem}{Remark}
\newtheorem{ex}{Example}
\newtheorem{cor}{Corollary}
\newtheorem{defi}{Definition}
\newtheorem{alem}{Lemma}[section]
\newtheorem{apro}[alem]{Proposition}
\title{Quasi-compactness of Markov kernels on weighted-supremum spaces and geometrical ergodicity}
\author{Denis GUIBOURG, Loïc HERV\'E, and James LEDOUX \footnote{INSA de Rennes, IRMAR, F-35000, France; CNRS, UMR 6625, Rennes, F-35000, France; Université Européenne de Bretagne, France.
 Denis.Guibourg@wanadoo.fr, \{Loic.Herve,James.Ledoux\}@insa-rennes.fr}
}
\begin{document}

\maketitle

\begin{abstract}
Let $P$ be a Markov kernel on a measurable space $\X$ and let $V:\X\r[1,+\infty)$. We provide various assumptions, based on drift conditions, under which $P$ is quasi-compact on the weighted-supremum Banach space $(\cB_V,\|\cdot\|_V)$ of all  the measurable functions $f : \X\r\C$ such that $\|f\|_V  := \sup_{x\in \X} |f(x)|/V(x) < \infty$. Furthermore we give bounds  for the essential spectral radius of $P$. Under additional assumptions, these results allow us to derive the convergence rate of $P$ on $\cB_V$, that is the geometric rate of convergence of the iterates $P^n$ to the stationary distribution in operator norm. Applications to discrete Markov kernels and to iterated function systems are presented. 
\end{abstract}

\begin{center}

AMS subject classification : 60J10; 47B07

Keywords : Markov chain, drift condition, essential spectral radius, convergence rate, birth-death Markov chains. 
\end{center}
%========================
%========================
\newpage
{\scriptsize
\tableofcontents
}

\newpage

%=====================
%=====================
\section{Introduction} \label{intro}
%====================
%=====================

Let $P$ be a Markov kernel on a measurable space $(\X,\cX)$. Let us introduce the weighted-supremum Banach space $(\cB_V,\|\cdot\|_V)$ composed of  measurable functions $f : \X\r\C$ such that 
$$\|f\|_V  := \sup_{x\in \X} \frac{|f(x)|}{V(x)} < \infty$$
where $V:\X\r[1,+\infty)$. Let $(\cB_0,\|\cdot\|_0)$ be the usual Banach space composed of all the bounded measurable functions $f : \X\r\C$ equipped with the supremum norm $\|f\|_0:=\sup_{x\in\X}|f(x)|$. 

The first purpose of the paper is to study the quasi-compactness of $P$ on $\cB_V$ with a control of its essential spectral radius $r_{ess}(P)$. Recall that $r_{ess}(P)$ is the infimum bound of the positive real numbers $r_0$ for which the following property holds:  the spectral values of $P$ of modulus greater than $r_0$ are finitely many eigenvalues having a finite-dimensional characteristic space. $P$ is said to be quasi-compact on $\cB_V$ if $r_{ess}(P)$ is strictly less than the spectral radius of $P$ (see Section~\ref{sec-mino} for details).  
The second purpose of the paper is to specify the link between quasi-compactness and the so-called $V$-geometric ergodicity \cite{MeyTwe93}, namely with the convergence of $P^n$ to $\pi$ in operator norm on $\cB_V$, where $\pi$ denotes the $P$-invariant probability measure. In this case, we are interested in finding upper bounds for the convergence rate $\rho_V(P)$ defined by 
\begin{equation} \label{Def_rhoV}
	\rho_V(P) := \inf\big\{ \rho \in (0,1),\sup_{\|f\|_V\leq1}\|P^nf-\pi(f)\|_V = O(\rho^n)\big\}.
\end{equation}
Finally the third purpose of the paper is to derive the $V$-geometric ergodicity of $P$, with a control of $\rho_V(P)$, from the strong ergodicity property with respect to some Lipschitz-weighted spaces.

 Note that this paper is not directly concerned with the essential spectral radius or the convergence rate of Markov chains either with respect to  the Lebesgue space $\L^2(\pi)$ as studied for instance in \cite[Sec.~5]{Wu04} for general Markov kernels and in \cite{AtcPer07} for Hastings and Metropolis algorithms using operator methods (see \cite[Section~2]{FerHerLed10} for an overview), or with respect to $\cB_V$ with a bounded function $V$, that is for uniformly ergodic Markov chains as investigated for instance in \cite[Th.~3.10]{Wu04} and \cite[Cor.~IV.1]{Hen07}. Mention that the paper \cite{Wu04} also deals with the essential spectral radius and the convergence rate of iterates  of $P$ acting on $\cB_V$. Actually Wu's article is the closest work to ours. Precise comparisons between our results and those of \cite{Wu04} are included throughout the paper but the core is in Subsection~\ref{Sec_Wu}.

Let us give an account of the main results of the paper in regards to our objectives. Under irreducibility and aperiodicity assumptions, it is well-known that the $V$-geometric ergodicity holds under the following drift condition:  
\begin{equation} \label{inequality-drift}
 \exists \varrho\in(0,1),\ \exists M\in(0,+\infty),\quad PV \leq \varrho \, V + M\, 1_S, \tag{\textbf{D}}
\end{equation}
where $S\subset\X$ satisfies the minorization condition 
\begin{equation} \label{small} 
 \forall x\in \X,\ \forall A\in\cX,\ \ \ P(x,A) \geq \nu(A)\, 1_S(x), \tag{\textbf{S}}
\end{equation}
for some positive measure $\nu$ on $(\X,\cX)$ (see \cite{MeyTwe93}). In Theorem~\ref{main} (Subsection~\ref{subsec-qc-drift}), without assuming any irreducibility or aperiodicity conditions, the quasi-compactness of $P$ on $\cB_V$ is proved under Conditions~(\ref{inequality-drift})-(\ref{small}). This is an expected result, already obtained in \cite{Hen06,Hen07}. We provide a simple and short proof of Theorem~\ref{main} which enables to well understand why the drift condition implies good spectral properties of $P$ on $\cB_V$. Furthermore we obtain the following upper bound for $r_{ess}(P)$ which is more explicit than in \cite{Hen06}:  
\begin{equation} \label{intro-main-r-ess}
r_{ess}(P) \leq \frac{\varrho\, \nu(1_\X) + \tau}{\nu(1_\X)+\tau} \quad \text{with } \tau:=\max(0,M-\nu(V)).
\end{equation} 
\indent In Theorem~\ref{pro-qc-bis} (Subsection~\ref{sub-suf-K}), assuming that,  for some $\ell\geq 1$, $P^\ell$ is a compact operator from $\cB_0$ to $\cB_V$, $P$ is shown to be power-bounded and quasi-compact on $\cB_V$ under the following weak drift condition 
\begin{equation} \label{cond-D}
\exists N\in\N^*,\ \exists d\in(0,+\infty),\ \exists \delta\in(0,1),\quad P^NV \leq \delta^N\, V + d\, 1_{\X}. \tag{\textbf{WD}}
\end{equation} 
Such a condition with $N=1$ has been introduced in \cite[Lem.~15.2.8]{MeyTwe93} as an alternative to the drift condition \cite[(V4)]{MeyTwe93} under suitable assumption on $V$. Under Condition~(\ref{cond-D}), let us define the real number $\delta_V(P)\in(0,1)$ as the infimum of the real numbers $\delta\in[0,1)$ such that we have (\ref{cond-D}):
\begin{equation} \label{def-hat-rho}
\delta_V(P) := \inf\big\{\delta\in[0,1) : \exists N\in\N^*,\, \exists d\in(0,+\infty),\ P^NV \leq \delta^N\, V + d\, 1_{\X}\big\}.
\end{equation}
Then the upper bound obtained in Theorem~\ref{pro-qc-bis} for $r_{ess}(P)$ is more precise than (\ref{intro-main-r-ess}), that is: 
 $$r_{ess}(P) \leq \delta_V(P).$$  
The key idea to prove Theorem~\ref{pro-qc-bis} is that Condition~(\ref{cond-D}) yields a Doeblin-Fortet inequality on the dual of $\cB_V$. This fact has been already used in \cite{FerHerLed11} to study regular perturbations of $V$-geometrically ergodic Markov chains. Under assumptions based on sophisticated parameters $\beta_w(\cdot)$ and $\beta_\tau(\cdot)$ for measure of non-compactness of $P$, Wu presented in \cite[Th.~3.11]{Wu04} a formula for $r_{ess}(P)$ involving equivalent functions to $V$. The assumptions, the conclusion and the proof of Wu's result are different from ours, as explained in Subsection~\ref{Sec_Wu}. The question to know if the equality $r_{ess}(P) = \delta_V(P)$ holds true under the hypotheses of Theorem~\ref{pro-qc-bis} is open.  However, by combining our Theorem~\ref{pro-qc-bis} and Wu's result, we prove in Subsection~\ref{Sec_Wu} that the answer to the previous question is positive in many situations. In particular we have $r_{ess}(P) = \delta_V(P)$ in all the examples of our paper. 

When the Markov kernel $P$ has an invariant probability distribution, the connection between the $V$-geometric ergodicity and the quasi-compactness of $P$ is recalled in Theorem~\ref{CNS-qc-Vgeo} 
(Subsection~\ref{sec-qc-V-geo}). Namely, $P$ is $V$-geometrically ergodic if and only if $P$ is a power-bounded quasi-compact operator on $\cB_V$ for which $\lambda=1$ is a simple eigenvalue and the unique eigenvalue of modulus one. In this case, if $\cV$ denotes the set of all the eigenvalues $\lambda$ of $P$ such that $r_{ess}(P)<|\lambda|<1$, then the convergence rate $\rho_V(P)$ is given by: $$\rho_V(P)=r_{ess}(P)\ \text{ if }\ \cV=\emptyset \quad \text{ and } \quad \rho_V(P)=\max\{|\lambda|,\, \lambda\in\cV\}\ \text{ if }\ \cV\neq\emptyset.$$
This result is valid for any quasi-compact operator, however we have not found such an explicit result in the literature on $V$-geometric ergodicity. 

Theorem~\ref{pro-tail-fct-propre} proved in Subsection~\ref{subsec-taille-fct-propre} is of great interest to investigate the eigenvalues of modulus one and the above set $\cV$ in order to obtain the $V$-geometric ergodicity of $P$ and,  more importantly, an upper bound for $\rho_V(P)$ from Theorem~\ref{CNS-qc-Vgeo}. Namely, under Condition (\ref{cond-D}), for any $\lambda\in\C$ such that $\delta \leq |\lambda| \leq 1$ where $\delta$ is given in (\ref{cond-D}), and for any $p\in\N^*$, we obtain with $\beta(\lambda) := \ln|\lambda|/\ln\delta$: 

\begin{equation} \label{intro-siez-fc}
f\in\cB_V\cap\ker(P-\lambda I)^p\ \Rightarrow\ \exists c\in(0,+\infty),\ |f| \leq c\, (\ln V)^{p(p-1)/2}\, V^{\beta(\lambda)}. 
\end{equation}  
In particular, if $\lambda$ is an eigenvalue such that $|\lambda|=1$, then any associated eigen-function $f$ is bounded on $\X$. By contrast, if $|\lambda|$ is close to $\delta_V(P)$, then $|f| \leq c\, V^{\beta(\lambda)}$ with $\beta(\lambda)$ close to~1. 

In Section~\ref{sub-sec-countable}, applications of Theorems~\ref{pro-qc-bis}-\ref{pro-tail-fct-propre} to discrete Markov chains are presented. When  $X:=\N$ and $\lim_nV(n)=+\infty$, any Markov kernel $P$ is compact from $\cB_0$ to $\cB_V$, and Theorem~\ref{pro-qc-bis}-Theorem~\ref{CNS-qc-Vgeo} are then specially relevant: if $P$ satisfies Condition~(\ref{cond-D}), then $P$ is  power-bounded and quasi-compact on $\cB_V$; if in addition $P$ is irreducible and aperiodic, then $P$ is $V$-geometrically ergodic.  In Subsection~\ref{sub-basic-ex-revis}, Property~(\ref{intro-siez-fc}) is used to compute the convergence rate $\rho_V(P)$ for birth-and-death Markov chains. 

 Section~\ref{sect-strong-ergo} is devoted to $V$-geometrical ergodicity of iterated function systems (IFS). The ideas developed in this section are based on Lipschitz contractive properties of $P$ as in \cite[Sect.~7.2]{Wu04}. More precisely, in \cite[Sect.~7.2]{Wu04} the contractive properties are expressed in terms of Wassertein distance. Ours are expressed in terms of moment/contraction conditions, called $(\cC_{a})$ (for some ${a}\in[1,+\infty)$), which are classical for IFS, see \cite{Duf97,Ben98,DiaFre99}. Under Conditions~$(\cC_{a})$ and our compactness assumption on  $P^\ell:\cB_0\r\cB_V$ for some $\ell\geq1$, the same precise bounds on $r_{ess}(P)$ and $\rho_V(P)$ as in \cite[Sect.~7.2]{Wu04} are obtained for IFSs in Corollary~\ref{pro-D-K1}. The others statements of Section~\ref{sect-strong-ergo} show that, in certain cases, Condition~$(\cC_{a})$ can be used directly to obtain further interesting rates of convergence of IFSs with explicit constants.

To the best of our knowledge, Theorems~\ref{main}, \ref{pro-qc-bis} and \ref{pro-tail-fct-propre} of Section~\ref{sec-mino} are new. Moreover Theorem~\ref{pro-qc-bis} and Wu's result \cite[Th.~3.11]{Wu04} are complementary since their combination provides the expected formula $r_{ess}(P) = \delta_V(P)$ under general assumptions. As in \cite{Wu04}, the bounds on $\rho_V(P)$ are derived from those on $r_{ess}(P)$ by using Theorem~\ref{CNS-qc-Vgeo}, but here we take advantage of Theorem~\ref{pro-tail-fct-propre} to study the eigenvalues $\lambda$ of $P$ such that $\delta_V(P)<|\lambda|\leq1$. This approach is original and often provides the exact value of $\rho_V(P)$. Most of bounds on $\rho_V(P)$ obtained in Section~\ref{sub-sec-countable} are new. Of course this method can only be used for Markov kernels $P$ such that $P^\ell$ is compact from $\cB_0$ to $\cB_V$ for some $\ell\geq1$. Classical instances of $V$-geometrically ergodic Markov kernels concern the discrete state-space, the  autoregressive models on $\X=\R^q$ with absolutely continuous noise with respect to the Lebesgue measure, and finally the MCMC algorithms. Our compactness assumption is fulfilled for the two first instances, see Section~\ref{sub-sec-countable} and Subsection~\ref{ex1-auto}. Unfortunately it does not hold in general for non-discrete Markov kernels arising from Hastings and Metropolis algorithms. Concerning the last issue, we refer to the works \cite{Bax05,Lun97,LunTwe96,MenTwe96,MeyTwe94,RobTwe99} where the convergence rate $\rho_V(P)$ is investigated by probabilistic methods. The best rates are obtained in \cite{LunTwe96} under the stochastic monotonicity assumption for $P$ which cannot be compared with our compactness assumption. 

Throughout the paper, when the function $V\equiv V_c$ depends on some parameter $c$, we use the notation $\cB_c \equiv \cB_{V_c}$ so that $\cB_c$ may stand for different sets from section to section. 

%=========================================
%=========================================
\section{Quasi-compactness on $\cB_V$ and $V$-geometric ergodicity} \label{sec-mino}
%==========================================
%==========================================
 Let $(\cB,\|\cdot\|)$ be a complex Banach space, and let $L$ be a bounded linear operator on $\cB$ with positive spectral radius $r(L):=\lim_n\|L^n\|^{1/n}$, where $\|\cdot\|$ also stands for the operator norm on $\cB$. For the sake of simplicity, we assume that $r(L):=1$ (if not, replace $L$ with $r(L)^{-1}L$). The restriction of $L$ to a $L$-invariant subspace $H$ is denoted by $L_{|H}$, and $I$ stands for the identity operator on $\cB$. 
 
 The simplest definition of quasi-compactness is the following (compare the definition below with the reduction of matrices or compact operators).
\begin{defi} \label{def-q-c} 
$L$ is quasi-compact on $\cB$ if there exist $r_0\in(0,1)$ and $m\in\N^*$, $\lambda_i\in\C$, $p_i\in\N^*$ ($\, i=1,\ldots,m$) such that: 
\begin{subequations}
\begin{equation} \label{noyit}
\cB = \overset{m}{\underset{i=1}{\oplus}} \ker(L - \lambda_i I)^{p_i}\, \oplus H, 
\end{equation}
where the $\lambda_i$'s are such that 
\begin{equation} \label{noyit-lambda}
|\lambda_i| \geq r_0\quad \text{ and } \quad 1\le \dim\ker(L-\lambda_i I)^{p_i} < \infty,
\end{equation}
and $H$ is a closed $L$-invariant subspace such that 
\begin{equation} \label{noyit-H}
\sup_{h\in H,\, \|h\|\leq1}\|L^nh\| = O({r_0}^n).
\end{equation}
\end{subequations}
\end{defi}

Concerning the essential spectral radius of $L$, denoted by $r_{ess}(L)$, here it is enough to have in mind that, if $L$ is quasi-compact on $\cB$, then we have (see for instance \cite{Hen93})
$$r_{ess}(L) = \inf\big\{r_0\in(0,1) \text{ s.t.~we have (\ref{noyit}) (\ref{noyit-lambda}) (\ref{noyit-H})} \big\}.$$
It is also well-known (e.g.~see \cite{Nev64,Kre85}) that $r_{ess}(L)$ is defined by 
\begin{equation} \label{ray-ess}
r_{ess}(L) := \lim_n\big(\inf \|L^n-K\|\big)^{\frac{1}{n}} 
\end{equation}
where the infimum is taken over the ideal of compact operators $K$ on  $\cB$. Consequently $L$ is quasi-compact if and only if there exist some $n_0\in\N^*$ and some compact operator $K_0$ on $\cB$ such that 
$r(L^{n_0}-K_0)<1$. Under the previous condition we have 
\begin{equation} \label{ray-ess-bis}
r_{ess}(L) \leq (r(L^{n_0}-K_0))^{1/n_0}.
\end{equation}
Indeed, for all $k\geq1$  we have $\|(L^{n_0}-K_0)^k\|^{1/(n_0k)} = \|L^{n_0k} - K_k\|^{1/(n_0k)}$ with some compact operator $K_k$ on $\cB$. Then (\ref{ray-ess}) gives:  
$r_{ess}(L) \leq \lim_k \|(L^{n_0}-K_0)^k\|^{1/(n_0k)} = (r(L^{n_0}-K_0))^{1/n_0}$. 
Finally, for any $\ell\geq1$, since 
$\lim_n(\inf \|L^n-K\|)^{1/n} = \lim_k(\inf \|L^{\ell k} - K\|)^{1/(\ell k)}$, we obtain  
\begin{equation} \label{ray-ess-puissance}
r_{ess}(L) = (r_{ess}(L^\ell))^{1/\ell}.
\end{equation}

Throughout the paper, we consider a function $V : \X\r[1,+\infty)$ and a Markov kernel $P$ on $(\X,\cX)$ such that $PV/V$ is bounded on $\X$ (i.e. $\|PV\|_{V}<\infty$). So $P$ continuously acts on~$\cB_V$. 

%=========================================
\subsection{Quasi-compactness on $\cB_V$ under the drift condition} \label{subsec-qc-drift}
%==========================================

\begin{theo} \label{main}
Let us assume that the Conditions~\emph{(\ref{inequality-drift})-(\ref{small})} in Introduction hold true. Then $P$ is a power-bounded quasi-compact operator on $\cB_V$ with 
\begin{equation} \label{first-r-ess}
 r_{ess}(P) \leq \frac{\varrho\, \nu(1_\X) + \tau}{\nu(1_\X)+\tau} \quad \text{with } \tau:=\max(0,M-\nu(V)).
\end{equation}
\end{theo}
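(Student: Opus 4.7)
My plan is to exhibit a splitting $P = R + K$ where $K$ is a rank-one compact operator on $\cB_V$ built from (\ref{small}) and where $R$ satisfies a sharp geometric bound $\|R^n\|_V = O(r_0^n)$ with $r_0 := (\varrho\,\nu(1_\X)+\tau)/(\nu(1_\X)+\tau)$. Combined with power-boundedness, applying (\ref{ray-ess-bis}) with $n_0=1$ and $K_0:=K$ will then yield $r_{ess}(P) \le r(P-K) = r(R) \le r_0$. Power-boundedness is immediate: iterating (\ref{inequality-drift}) and using $\mathbf{1}_S\le 1 \le V$ together with $P^k\mathbf{1}_S \le 1$ gives
$$P^n V \;\le\; \varrho^n V + \frac{M(1-\varrho^n)}{1-\varrho} \;\le\; \bigl(1+M/(1-\varrho)\bigr)\,V,$$
so $\sup_n\|P^n\|_V<\infty$. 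Since $P\mathbf{1}_\X=\mathbf{1}_\X$ with $\mathbf{1}_\X\in\cB_V$, one also has $r(P)=1$, so establishing $r_{ess}(P)<1$ produces quasi-compactness in the sense of Definition~\ref{def-q-c}.

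Define $Kf := \nu(f)\,\mathbf{1}_S$. For any $x\in S$, (\ref{small}) gives $\nu\le P(x,\cdot)$, hence $\nu(V) \le PV(x) \le \varrho V(x)+M<\infty$, so $K$ is a well-defined rank-one (therefore compact) operator on $\cB_V$. The minorization also ensures that $R:=P-K$ is a positive sub-Markov kernel, so pointwise inequalities propagate under iteration of $R$. I would then pass to the equivalent weight $V':=V+c$ with $c:=\tau/\alpha$ where $\alpha:=\nu(1_\X)$ (taking $c=0$ when $\tau=0$); since $V\le V'\le (1+c)V$, the norms on $\cB_V$ and $\cB_{V'}$ are equivalent. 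A direct computation gives
$$RV' \;=\; PV - \nu(V)\,\mathbf{1}_S + c\,(1-\alpha\,\mathbf{1}_S) \;\le\; \varrho V + c + \bigl(M-\nu(V)-c\alpha\bigr)\,\mathbf{1}_S.$$
The choice $c=\tau/\alpha$ annihilates the $\mathbf{1}_S$-coefficient (using $\tau=\max(0,M-\nu(V))$), leaving $RV' \le \varrho V + \tau/\alpha$, and a short algebraic check confirms $\varrho V + \tau/\alpha \le r_0\,V'$ whenever $V\ge 1$. By positivity of $R$, iteration yields $R^n V' \le r_0^n V'$, hence $\|R^n\|_{V'}\le r_0^n$ and $\|R^n\|_V=O(r_0^n)$, so $r(R)\le r_0$. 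Feeding this into (\ref{ray-ess-bis}) closes the proof.

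The main obstacle is identifying the correct shifted weight $V'=V+\tau/\alpha$. Working directly with $V$, the cleanest estimate for $R$ is $RV \le \varrho V + \tau\,\mathbf{1}_S$, and crude iteration only gives $r(R) \le \varrho + \tau$, which is typically much worse than the announced $r_0$; the shift by $\tau/\alpha$ is exactly what is needed to trade the additive residual $\tau\,\mathbf{1}_S$ for a multiplicative factor $r_0$ on $V'$, with the total mass $\alpha$ of the minorizing measure appearing naturally as the exchange rate between $\tau$ and the additive constant. Once this ansatz is in place, the rest of the argument is a mechanical computation.
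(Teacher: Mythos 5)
Your proof is correct, and it takes a genuinely different route from the paper's. Both proofs begin with the same rank-one split $P=R+K$, $Kf=\nu(f)\,1_\X$ replaced by $Kf=\nu(f)\,1_S$, and both reduce the problem via (\ref{ray-ess-bis}) to bounding $r(R)$. But the paper then invokes Lemma~\ref{Q-eta}, which rests on the Schaefer/Krein--Rutman-type fact that a positive operator on $\cB_V$ admits a nonnegative eigenfunctional $\eta$ at its spectral radius; it extracts the bound on $r(R)$ by testing the drift inequality $RV\le\varrho V+\tau\,1_S$ against $\eta$ and combining with the identity $\eta(T1_\X)=(1-r)\eta(1_\X)$. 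You instead shift the weight to $V'=V+\tau/\nu(1_\X)$, and a direct computation (which I checked: the $1_S$-coefficient is annihilated by the choice $c=\tau/\alpha$, and $\varrho V+c\le r_0 V'$ reduces to $V\ge 1$ after dividing by the common negative factor $(\varrho-1)/(\alpha+\tau)$) shows $RV'\le r_0 V'$, so $R$ is a strict contraction on the equivalent space $\cB_{V'}$ with $\|R^n\|_{V'}\le r_0^n$. Your approach is more elementary (no appeal to positivity theory on Banach lattices, no duality), and it is quantitatively slightly stronger: it delivers the explicit operator-norm decay $\|R^n\|_{V'}\le r_0^n$ rather than only $r(R)\le r_0$. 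The weight-shift $V\mapsto V+\tau/\nu(1_\X)$ is exactly the right ansatz and is not used in the paper; it is a nice constructive alternative to the abstract eigenfunctional argument.
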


The proof of Theorem~\ref{main} is based on the next lemma. 
\begin{lem} \label{Q-eta}Let $Q(x,dy)$ be a Markov kernel on $(\X,\cX)$ having a continuous action on $\cB_V$ (i.e.~$\|QV\|_{V}<\infty$) such that $Q = A + B$ for some nonnegative bounded linear operators $A$ and $B$ on $\cB_V$. Let $r(B)$ denote the spectral radius of $B$ which is assumed to be positive. \newline
Then, there exists a nontrivial nonnegative continuous linear form $\eta$ on $\cB_V$ such that $\eta \circ B= r(B)\, \eta $ and $\eta(A1_\X) = (1-r(B))\eta(1_\X)$.  
\end{lem}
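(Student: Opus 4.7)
The plan is to split the statement into two assertions, because the second one, $\eta(A1_\X) = (1-r(B))\eta(1_\X)$, is automatic once we have the first one, $\eta \circ B = r(B)\eta$. Indeed, the Markov property $Q 1_\X = 1_\X$ together with the decomposition $Q = A + B$ gives $A 1_\X + B 1_\X = 1_\X$; applying $\eta$ and using $\eta(B 1_\X) = r(B)\eta(1_\X)$ then yields the second identity. So the heart of the proof is to construct a nontrivial nonnegative $\eta \in \cB_V^*$ with $B^*\eta = r(B)\eta$.

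The construction will be a Krein-Rutman-type argument in the dual, taking advantage of the fact that $V$ is an order unit for $\cB_V$: every $f \in \cB_V$ satisfies $|f| \leq \|f\|_V V$, which gives $\|\eta\|_{\cB_V^*} = \eta(V)$ for every nonnegative $\eta \in \cB_V^*$. Consequently the convex set $K := \{\eta \in \cB_V^* : \eta \geq 0,\ \eta(V) = 1\}$ is contained in the unit ball of $\cB_V^*$ and is weak-$*$ compact by Banach-Alaoglu combined with the weak-$*$ closedness of the positivity and normalization conditions.

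For each real $\lambda > r(B)$ the resolvent $R(\lambda) := (\lambda I - B)^{-1} = \sum_{n \geq 0} \lambda^{-n-1} B^n$ is a well-defined positive bounded operator on $\cB_V$. Starting from any nonzero nonnegative $\mu_0 \in \cB_V^*$ (for instance $\mu_0(f) = f(x_0)/V(x_0)$), I set $\mu_\lambda := R(\lambda)^*\mu_0$, which is positive and satisfies $B^*\mu_\lambda = \lambda\mu_\lambda - \mu_0$; after normalization, $\eta_\lambda := \mu_\lambda/\mu_\lambda(V) \in K$ satisfies $B^*\eta_\lambda = \lambda\eta_\lambda - \mu_0/\mu_\lambda(V)$. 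I would then extract a sequence $\lambda_k \downarrow r(B)$ along which $\eta_{\lambda_k}$ converges weak-$*$ to some $\eta \in K$ (so $\eta(V) = 1$, hence $\eta \neq 0$); weak-$*$ continuity of $B^*$ together with $\mu_0/\mu_{\lambda_k}(V) \to 0$ will yield $B^*\eta = r(B)\eta$ in the limit.

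The main obstacle is guaranteeing that $\mu_{\lambda_k}(V) = \|\mu_{\lambda_k}\|_{\cB_V^*}$ blows up along some sequence $\lambda_k \downarrow r(B)$ for a suitable choice of $\mu_0$. This uses that $r(B) \in \sigma(B) = \sigma(B^*)$, a classical fact for positive operators on order-unit spaces (provable from the identity $r(B) = \lim_n \|B^n V\|_V^{1/n}$), so the resolvent norms $\|R(\lambda)\|$ must diverge as $\lambda \downarrow r(B)$. I would argue by contradiction: if $\|\mu_\lambda\|_{\cB_V^*}$ stayed bounded for every positive $\mu_0$, a weak-$*$ limit $\mu_\infty$ would satisfy $(r(B) I - B^*)\mu_\infty = \mu_0$, so every positive functional would lie in the range of $r(B) I - B^*$. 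Using the order-unit decomposition of arbitrary functionals into positive parts together with Banach-Steinhaus and the open mapping theorem, $r(B) I - B^*$ would then be invertible, contradicting $r(B) \in \sigma(B^*)$. This produces the required $\mu_0$ and sequence $\lambda_k$, and completes the construction.
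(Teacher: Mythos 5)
Your proposal is essentially the paper's own route: the lemma reduces algebraically (via $Q1_\X = 1_\X$) to producing a nontrivial nonnegative $\eta \in \cB_V'$ with $\eta\circ B = r(B)\eta$, and you construct it by the same resolvent-plus-weak-$*$-compactness argument the paper carries out in Appendix~\ref{B} (the main text cites \cite[App., Cor.~2.6]{Sch71}; Appendix~\ref{B} is a self-contained proof for $\cB_V$). The order-unit normalization $\|\eta\|_{\cB_V'} = \eta(V)$ for $\eta\geq 0$, the positivity of $R(\lambda)^*$ for $\lambda>r(B)$, the weak-$*$ extraction, and the Banach--Steinhaus argument to make the normalizer blow up are exactly the ingredients of that appendix.

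One step does not go through as written, though. You argue that if $\mu_\lambda := R(\lambda)^*\mu_0$ stayed bounded for every positive $\mu_0$, then every functional would lie in the range of $r(B)I-B^*$, and that surjectivity plus the open mapping theorem yields invertibility. Surjectivity plus open mapping gives only openness; without injectivity you do not get invertibility, and nothing in the setup rules out a nontrivial kernel. (The assignment $\mu_0\mapsto\mu_\infty$ is also only a weak-$*$ cluster point, not obviously a well-defined linear map.) The direct finish bypasses all of this: boundedness of $\|R(\lambda)^*\mu_0\|$ for all positive $\mu_0$ extends by lattice decomposition to all $\mu_0$, so Banach--Steinhaus gives $\sup_{\lambda>r(B)}\|R(\lambda)^*\| = \sup_\lambda \|R(\lambda)\| < \infty$; but $\|R(\lambda)\|\geq \mathrm{dist}(\lambda,\sigma(B))^{-1}\geq (\lambda-r(B))^{-1}\to\infty$ once $r(B)\in\sigma(B)$, a contradiction. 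Two smaller points: the fact $r(B)\in\sigma(B)$ is not a consequence of the spectral-radius formula $r(B)=\lim_n\|B^nV\|_V^{1/n}$ alone; the paper derives it from the positivity of the resolvent in the first half of Appendix~\ref{B}, and your proof should do likewise. And since $\cB_V$ need not be separable, the weak-$*$ extraction should be a subnet or cluster-point argument, not a convergent subsequence, exactly as the paper phrases it.
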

\begin{proof}{}
Since $B\geq0$ and $r:=r(B)>0$, we know from \cite[App., Cor.2.6]{Sch71} that there exists a nontrivial nonnegative  
continuous linear form $\eta$ on $\cB_V$ such that $\eta \circ B = r\, \eta$ 
(see also Remark~\ref{eta-gene} and Appendix~\ref{B}). From $Q = A + B$, we have $\eta\circ Q = \eta\circ A + r\, \eta$, thus $\eta(Q1_\X) = \eta(1_\X) = \eta(A1_\X) + r\, \eta(1_\X)$. Hence $\eta(A1_\X) = (1-r)\eta(1_\X)$.  
\end{proof}
\begin{proof}{ of Theorem~\ref {main}}
Condition~(\ref{inequality-drift}) implies that $PV \leq \varrho\, V + M\,1_\X$. Iterating this inequality easily ensures that $\sup_k\|P^{k}V\|_V<\infty$, that is $P$ is power-bounded. Then, from $P1_\X=1_\X$ and $1_\X\in\cB_V$, we have $r(P)=1$. Moreover, since $\|PV\|_{V}<\infty$, we deduce from (\ref{small}) that $\nu(V)<\infty$. Thus we can define the following rank-one operator on $\cB_V$: $Tf := \nu(f)\, 1_S$. Let $R := P-T$. From $T\geq0$ and from (\ref{small}), it follows that $0\leq R \leq P$, so $r(R) \leq 1$. Let us set $r:=r(R)$. If $r=0$, then $P$ is quasi-compact with $r_{ess}(P) =0$ from (\ref{ray-ess-bis}). Now assume that $r\in(0,1]$. Then, from Lemma~\ref{Q-eta}, there exists a nontrivial nonnegative continuous linear form $\eta$ on $\cB_V$ such that $\eta \circ R = r\, \eta$ and $\eta(T1_\X) = (1-r)\eta(1_\X)$, from which we deduce that $$\eta(1_S) = \frac{(1-r)\eta(1_\X)}{\nu(1_\X)} \leq \frac{(1-r)\eta(V)}{\nu(1_\X)}.$$
Next, we have $RV = PV - TV = PV - \nu(V) 1_S \leq \varrho V + M1_S - \nu(V) 1_S = \varrho\, V + (M-\nu(V))\, 1_S$. Hence, setting $\tau := \max(0,M-\nu(V))\ge 0$,
\begin{equation} \label{11}
r\, \eta(V) = \eta(RV) \leq \varrho\, \eta(V) + \tau\, \eta(1_S) \leq \varrho\, \eta(V) + \tau\frac{(1-r)\eta(V)}{\nu(1_\X)}.
\end{equation} 
Since $\eta\neq 0$, we have $\eta(V) > 0$, and since $\varrho\in(0,1)$, we cannot have $r=1$. Thus $r\in(0,1)$, and $P$ is quasi-compact from (\ref{ray-ess-bis}) with $r_{ess}(P) \leq r_{ess}(R) = r$. Then Inequality~(\ref{first-r-ess}) is deduced from (\ref{11}). 
\end{proof}
\begin{rem} \label{rk-set-itere}
If Conditions~\emph{(\ref{inequality-drift})-(\ref{small})} are fulfilled for some iterate $P^N$ in place of $P$ (with parameters $\varrho_N<1$, $M_N>0$ and positive measure $\nu_N(\cdot)$), then the conclusions of Theorem~\ref {main} hold true with (\ref{first-r-ess}) replaced by 
$$r_{ess}(P) = r_{ess}(P^N)^{1/N} \leq \left(\frac{\varrho_N \nu_N(1_\X) + \tau_N}{\nu_N(1_\X)+\tau_N}\right)^{\frac{1}{N}}
\quad \text{where } \tau_N:=\max(0,M_N-\nu_N(V)).$$ 
\end{rem}
\begin{rem} \label{eta-gene}
The proof of Lemma~\ref{Q-eta} is based on the  following result \cite[App., Cor.2.6]{Sch71}: if $L$ is a positive operator on a Banach lattice $\cB$ whose positive cone is normal and has interior points, then there exists a nontrivial nonnegative continuous linear form $e'$ on $\cB$ such that $e'\circ L = r(L)\, e'$. In fact $\cB_V$ is the simplest (and generic) example of Banach lattices satisfying the last conditions, and we give in Appendix~\ref{B} a proof of the previous statement in this special case. Mention that this result also provides that the quasi-compactness of $P$ on $\cB_V$ is equivalent to the mean ergodicity with finite rank limit projection (see \cite{Her08}, see also \cite{Lin75,Lin78}). 
\end{rem}

%==================================
\subsection{Quasi-compactness on $\cB_V$ under the weak drift condition~(\ref{cond-D})} \label{sub-suf-K}
%================================
Recall that $(\cB_0,\|\cdot\|_0)$ denotes the Banach space of all the bounded measurable functions $f : \X\r\C$, equipped with the supremum norm $\|f\|_0:=\sup_{x\in\X}|f(x)|$, and that $\delta_V(P)$ is the infimum  of the real numbers $\delta\in[0,1)$ such that we have (\ref{cond-D}) (see (\ref{def-hat-rho})). 
\begin{theo} \label{pro-qc-bis} 
If Condition~\emph{(\ref{cond-D})} holds true and if $P^\ell : \cB_0\r\cB_V$ is compact for some $\ell\geq1$, then $P$ is a power-bounded quasi-compact operator on $\cB_V$, and we have $$r_{ess}(P) \leq \delta_V(P).$$ 
\end{theo}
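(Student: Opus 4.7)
The plan is to realise the strategy hinted at in the introduction: condition (\ref{cond-D}) translates into a Doeblin--Fortet (DF) inequality on the dual $\cB_V^*$, Schauder's theorem turns the compactness of $P^\ell:\cB_0\to\cB_V$ into a compactness property of $P^{*\ell}$, and the Ionescu-Tulcea--Marinescu/Hennion quasi-compactness criterion applied to $P^*$ on $\cB_V^*$ yields $r_{ess}(P^*)\leq\delta$ for every $\delta>\delta_V(P)$. A duality step then transfers the bound to $P$.

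Power-boundedness of $P$ on $\cB_V$ is the easy part: iterating (\ref{cond-D}) gives $P^{kN}V\leq\delta^{kN}V + d(1-\delta^N)^{-1}\,1_\X\leq \bigl(\delta^{kN} + d(1-\delta^N)^{-1}\bigr)V$, hence $\sup_k\|P^{kN}\|_V<\infty$. Since $\|P^j\|_V\leq\|PV\|_V^j<\infty$ for $0\leq j<N$, this yields $\sup_n\|P^n\|_V<\infty$.

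For the essential-spectral-radius bound, fix $\delta>\delta_V(P)$ with associated $N,d$. Via the canonical isometry $\cB_V\simeq\cB_0$, $f\mapsto f/V$, every $\mu\in\cB_V^*$ admits a positive ``absolute value'' $|\mu|\in\cB_V^*$ satisfying $\|\mu\|_V^* = |\mu|(V)$, and $\|\mu\|_0^*:=\sup_{\|f\|_0\leq1}|\mu(f)| = |\mu|(1_\X)$ is a continuous seminorm on $\cB_V^*$ (since $V\geq 1$). Combining the pointwise inequality $|P^*\mu|\leq P^*|\mu|$ with the iterated drift $P^{kN}V\leq\delta^{kN}V + C\,1_\X$ ($C:=d/(1-\delta^N)$) gives, for every $k\geq 1$,
\[
\|P^{*kN}\mu\|_V^* \,=\, |P^{*kN}\mu|(V) \,\leq\, |\mu|(P^{kN}V) \,\leq\, \delta^{kN}\|\mu\|_V^* + C\,\|\mu\|_0^*,
\]
which is the DF inequality on $\cB_V^*$. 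Schauder's theorem applied to the compactness hypothesis yields $P^{*\ell}:\cB_V^*\to\cB_0^*$ compact, so $P^{*\ell}$ sends the $\|\cdot\|_V^*$-unit ball into a relatively $\|\cdot\|_0^*$-compact subset of $\cB_V^*$. These two ingredients (DF inequality plus compactness at iterate $\ell$) are exactly the hypotheses of the Ionescu-Tulcea--Marinescu/Hennion theorem applied to $L:=P^*$ on $\cB_V^*$ with strong norm $\|\cdot\|_V^*$ and weak seminorm $\|\cdot\|_0^*$; it yields $r_{ess}(P^*)\leq\lim_k(\delta^{kN})^{1/(kN)}=\delta$.

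Finally, $r_{ess}(P)=r_{ess}(P^*)$ gives $r_{ess}(P)\leq\delta$, and letting $\delta\downarrow\delta_V(P)$ finishes the proof. I expect two main difficulties. First, defining $|\mu|$ rigorously for complex $\mu\in\cB_V^*$ (a bounded finitely-additive, not $\sigma$-additive, object) and justifying $|P^*\mu|\leq P^*|\mu|$: these reduce, via the isometry $\cB_V\simeq\cB_0$, to the standard total-variation construction in $\cB_0^*$, presumably the point at which \cite{FerHerLed11} is invoked. Second, the equality $r_{ess}(P)=r_{ess}(P^*)$: Schauder's theorem together with (\ref{ray-ess}) gives $r_{ess}(P^*)\leq r_{ess}(P)$ immediately, but the reverse inequality is non-trivial; one either appeals to a Nussbaum-type duality result or dualises the proof of Hennion's theorem to produce compact approximants of $P^n$ directly on $\cB_V$.
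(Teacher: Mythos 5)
Your proof matches the paper's route almost exactly: iterate \textbf{(WD)} for power-boundedness, pass to $P^*$ on $\cB_V'$ using the Banach-lattice modulus $|f'|$, derive the Doeblin--Fortet bound $\|(P^*)^Nf'\|_V\le\delta^N\|f'\|_V+d\|f'\|_0$ from $\langle|f'|,P^NV\rangle$, combine with Schauder compactness of $(P^*)^\ell:\cB_V'\to\cB_0'$, and apply Hennion's criterion to $P^*$.

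Your hesitation at the final transfer step is well-placed, because the paper simply asserts that ``$P$ satisfies the same properties on $\cB_V$'' without further comment, and, as you observe, Schauder together with (\ref{ray-ess}) only yields $r_{ess}(P^*)\le r_{ess}(P)$, which is the wrong direction. The needed inequality $r_{ess}(P)\le r_{ess}(P^*)$ does hold in full generality: one way is the Goldenstein--Gohberg--Markus duality for the ball measure of non-compactness $\chi$, namely $\tfrac12\chi(T^*)\le\chi(T)\le2\chi(T^*)$, combined with Nussbaum's formula $r_{ess}(L)=\lim_n\chi(L^n)^{1/n}$; another is the Fredholm duality ($L-\lambda I$ is Fredholm iff $L^*-\lambda I$ is), which identifies the Wolf essential spectra of $L$ and $L^*$ and hence their essential spectral radii. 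Either of the routes you sketched would indeed close this gap, which the paper leaves implicit.
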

\begin{proof}{} 
Iterating (\ref{cond-D}) shows that $P$ is power-bounded on $\cB_V$ (proceed as in the beginning of the proof of Theorem~\ref {main}). Since $\delta_V(P) = (\delta_V(P^\ell))^{1/\ell}$ and $r_{ess}(P) = (r_{ess}(P^\ell))^{1/\ell}$ (see (\ref{ray-ess-puissance})), we only consider the case $\ell:=1$, that is $P : \cB_0\r\cB_V$ is compact.

Now let $(\cB_V',\|\cdot\|_V)$ (resp.~$(\cB_0',\|\cdot\|_0)$) denote the dual space of $\cB_V$ (resp.~of $\cB_0$). Note that we make a slight abuse of notation in writing again $\|\cdot\|_V$ and $\|\cdot\|_0$ for the dual norms. Let $P^*$ denote the adjoint operator of $P$ on $\cB_V'$. In fact, we prove that $P^*$ is a quasi-compact operator on $\cB_V'$ with $r_{ess}(P^*) \leq \delta_V(P)$, so that $P$ satisfies the same properties on $\cB_V$. Since $P : \cB_0\r\cB_V$ is assumed to be compact, then so is $P^* : \cB_V'\r\cB_0'$. Moreover $P^*$ satisfies a Doeblin-Fortet inequality from Lemma~\ref{lem-D-F} below. Then we deduce from Lemma~\ref{lem-D-F} and \cite{Hen93} that $P^*$ is a quasi-compact operator on $\cB_V'$, with $r_{ess}(P^*) \leq \delta$ for any $\delta\in(\delta_V(P),1)$, so that $r_{ess}(P^*) \leq \delta_V(P)$. 
\end{proof}

For the sake of simplicity we consider the same usual bracket notation $\langle\cdot,\cdot\rangle$ in both  $\cB_V'\times\cB_V$ and $\cB_0'\times\cB_0$.  Recall that $\cB_V, \cB_0$ are Banach lattices, so are $\cB_V'$, $\cB_0'$. For each $g'\in\cB_V'$ (resp.~$g'\in\cB_0'$), one can define the modulus $|g'|$ of $g'$ in $\cB_V'$ (resp.~in $\cB_0'$), see \cite{Sch71}. For the next arguments, it is enough to have in mind 
that $g'$ and $|g'|$ have the same norm in $\cB_V'$ (resp.~in $\cB_0'$), more precisely: 
$$\forall g'\in\cB_0',\ \ \|g'\|_0 = 
\langle |g'|, 1_{\X} \rangle\ \ \ 
\mbox{and}\ \ \ \ \forall g'\in\cB_V',\ \ \|g'\|_V = 
\langle |g'|, V\rangle.$$
\begin{lem} \label{lem-D-F} 
Let $\delta\in(\delta_V(P),1)$. Then, there exist $N\in\N^*$ and $d\in(0,+\infty)$ such that for all $f'\in\cB_V'$ we have: $\|P^{*N}f'\|_V \leq \delta^N\|f'\|_V + d\|f'\|_0$.
\end{lem}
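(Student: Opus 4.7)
The plan is to transfer the scalar drift inequality~(\ref{cond-D}) from $\cB_V$ to the dual $\cB_V'$ via the positivity of $P^*$ and the identities relating dual norms to evaluations at $V$ and $1_\X$. The work is essentially bookkeeping in the Banach lattice framework; there is no substantial obstacle.

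First, fix $\delta \in (\delta_V(P),1)$. By the very definition~(\ref{def-hat-rho}), there exist $N \in \N^*$ and $d \in (0,+\infty)$ such that the pointwise inequality $P^N V \leq \delta^N\, V + d\, 1_\X$ holds on $\X$. This is the only way Condition~(\ref{cond-D}) will enter the proof.

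Next, I would invoke the Banach lattice structure. Because $P$ is a Markov kernel, $P$ is a positive operator on $\cB_V$, hence $P^*$ and all its iterates $P^{*N}$ are positive on $\cB_V'$. Positivity gives the modulus inequality $|P^{*N} f'| \leq P^{*N}|f'|$ in $\cB_V'$ for every $f' \in \cB_V'$. Combining this with the lattice-norm identities recalled in the excerpt, namely $\|g'\|_V = \langle |g'|, V\rangle$ on $\cB_V'$ and $\|g'\|_0 = \langle |g'|, 1_\X\rangle$ on $\cB_0'$ (the latter applied to the restriction of $|f'|$ to $\cB_0 \subset \cB_V$, which coincides with the modulus of the restriction since $\cB_0$ is a sublattice containing $1_\X$), the whole computation reduces to the chain
\begin{equation*}
\|P^{*N} f'\|_V = \langle |P^{*N} f'|, V\rangle \leq \langle P^{*N}|f'|, V\rangle = \langle |f'|, P^N V\rangle,
\end{equation*}
where the last equality is the duality bracket applied to $V \in \cB_V$.

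Finally, inserting the drift bound $P^N V \leq \delta^N V + d\, 1_\X$ into the nonnegative linear form $|f'|$ yields
\begin{equation*}
\langle |f'|, P^N V\rangle \leq \delta^N \langle |f'|, V\rangle + d\, \langle |f'|, 1_\X\rangle = \delta^N \|f'\|_V + d\, \|f'\|_0,
\end{equation*}
which is the Doeblin--Fortet inequality claimed. The only point that merits a brief justification in the write-up is the identification of the modulus of $f'$ restricted to $\cB_0$ with the restriction of $|f'|$, but this is immediate from the lattice formula $|f'|(g) = \sup_{|h|\leq g}|f'(h)|$ since any $h\in\cB_V$ with $|h|\leq g\in\cB_0$ automatically lies in $\cB_0$.
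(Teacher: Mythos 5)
Your proof is correct and follows essentially the same route as the paper's: obtain $N,d$ from the definition of $\delta_V(P)$, use positivity of $P^{*N}$ together with the lattice-norm identities $\|g'\|_V=\langle|g'|,V\rangle$ and $\|g'\|_0=\langle|g'|,1_\X\rangle$, pass to the adjoint to get $\langle|f'|,P^NV\rangle$, and insert the drift bound. The only cosmetic difference is that the paper estimates $\|P^{*N}f'\|_V$ via the supremum over $\|f\|_V\le1$ rather than via the modulus identity $\|P^{*N}f'\|_V=\langle|P^{*N}f'|,V\rangle$ combined with $|P^{*N}f'|\le P^{*N}|f'|$, but these are the same computation in different notation; your added remark justifying the identification of $\langle|f'|,1_\X\rangle$ with $\|f'\|_0$ is a welcome detail the paper leaves implicit.
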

\begin{proof}{} 
Let $f'\in\cB_V'$ and $n\geq1$. Since $P^n$ is a nonnegative operator on $\cB_V$, so is its adjoint operator $P^{*n}$ on $\cB_V'$, and we have for all $f\in\cB_V$ such that $\|f\|_V\leq1$ (ie.~$|f| \leq V$): 
$$\big|\langle (P^{*})^n f', f \rangle\big| \leq \big\langle (P^{*})^n|f'|, |f| \big\rangle \leq \big\langle (P^{*})^n|f'|, V \big\rangle = \langle |f'|, P^nV \rangle.$$
By definition of $\delta_V(P)$ and from $\delta\in(\delta_V(P),1)$, there exist $N\in\N^*$ and $d\in(0,+\infty)$ such that $P^NV \leq \delta^N\, V + d\, 1_{\X}$. Thus   
\begin{eqnarray*}
\|(P^{*})^N f'\|_V &:=& \sup_{f\in{\cal B}_V,\|f\|_V\leq1} \big|\langle (P^{*})^N f', f \rangle \big| \\ 
&\leq& \langle |f'|, P^{N}V\rangle \\ 
&\leq& \delta^N\, \langle |f'|,V \rangle + d\, \langle |f'|,1_{\X}\rangle = \delta^N\|f'\|_V + d \|f'\|_0. 
\end{eqnarray*}
\end{proof}

\subsection{Comparison with Wu's work and further statements} \label{Sec_Wu}
Quasi-compactness of Markov kernels acting on $\cB_V$ is fully studied in \cite[Th.~3.11]{Wu04}. A first difference between Theorem~\ref{pro-qc-bis} and \cite[Th.~3.11]{Wu04} concerns their proofs. The proof of Theorem~\ref{pro-qc-bis}  is much more direct than in \cite{Wu04} since it uses Doeblin-Fortet inequalities. The next remarks show that Wu's  assumptions and conclusion are different from ours, but also complementary.  

\begin{itemize}
	\item $(\X,d)$ is assumed to be a Polish space in \cite{Wu04}, and Wu's topological assumptions on $P$ are the following ones: 
	
{\bf (A1')} {\it $\ \ P(x,dy)$ and $P_V(x,dy) := V(x)^{-1}V(y)P(x,dy)$ satisfy Hypothesis~(A1)}\footnote{The use of $P_V$ is crucial in \cite{Wu04}: indeed $P_V$ is a bounded operator on $\cB_0$ which has the same spectral properties as $P$ acting on $\cB_V$. The statement \cite[Th.~3.11]{Wu04} is then deduced from the study of the essential spectral radius of bounded kernels acting on $\cB_0$, see \cite[Th.~3.10]{Wu04}. Note that when $P$ satisfies Hypothesis~(A1), the same holds for $P_V$ whenever $PV^p/V^p$ is bounded on $\X$ for some $p\in(1,+\infty)$, see \cite{Wu04}.}

where Wu's hypothesis (A1) (introduced in \cite[p.~265]{Wu04}) uses sophisticated parameters $\beta_w(\cdot)$ and $\beta_\tau(\cdot)$ for measure of non-compactness of $P$. Our topological assumption, namely $P^\ell$ (for some $\ell\geq1$) is compact from $\cB_0$ to $\cB_V$, is more manageable and it only involves the kernel $P$ (not $P_V$). Furthermore, using duality ($(P^*)^\ell$ is compact from $\cB_0'$ to $\cB_V'$), our compactness assumption corresponds to one of the standard hypotheses of \cite{Hen93}.   

\item  The contraction-type condition in \cite[Th.~3.11]{Wu04} involves equivalent functions to $V$. When $V(x) \r +\infty$ as $d(x,x_0)\r+\infty$, it writes as follows (See $(a.i)\Leftrightarrow (a.iii)$ in \cite[Th.~3.11]{Wu04}): there exists an equivalent function $W$ (i.e.~$c^{-1}V\leq W \leq cV$) such that 
  $$r(W) := \limsup_{x\r\infty} \frac{(PW)(x)}{W(x)} <1.$$ 
  In practice, finding such a function $W$ is not easy, excepted of course when we directly have $PV \leq \delta V + d 1_{\X}$ with some $\delta\in(0,1)$ and $d>0$ (in this case $W=V$). Our contraction-type condition is: 
  $$\delta_V(P)  < 1.$$
That all the iterates of $P$ are involved in the definition (\ref{def-hat-rho}) of $\delta_V(P)$, and so in a bound of $r_{ess}(P)$, is quite natural from the spectral definition of $r_{ess}(P)$. Moreover, since the definition of $\delta_V(P)$ is only based on the function $V$ (not on equivalent functions), our contraction condition is more manageable than in \cite{Wu04}. 

\item Wu's conclusion \cite[(3.17)]{Wu04} states that $r_{ess}(P)$ is equal to the infimum of the quantities $r(W)$ over all the equivalent functions $W$. Using this formula to compute $r_{ess}(P)$ seems to be very difficult in practice (anyway such computations are not reported in Wu's examples). Finally, combining Theorem~\ref{pro-qc-bis} which gives the inequality $r_{ess}(P) \leq \delta_V(P)$, and Wu's result enables us to prove in Corollary~\ref{cor-Wu-egalite} that the expected equality $r_{ess}(P) = \delta_V(P)$ holds in many case.
\end{itemize}

\begin{cor} \label{cor-Wu-egalite} 
Assume that $\X$ is a Polish space, that $P$ satisfies Condition {\bf (WD)}, that $P^\ell$ is compact from $\cB_0$ to $\cB_V$ for some $\ell\geq 1$, that the topological assumptions {\bf (A1')} of \cite[Th.~3.11]{Wu04} are satisfied, and finally that 
$PV$ is bounded on each compact set of $\X$. Then: 
$$r_{ess}(P) = \delta_V(P).$$ 
\end{cor}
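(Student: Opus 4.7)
The plan is to combine Theorem~\ref{pro-qc-bis}, which provides the easy inequality $r_{ess}(P) \leq \delta_V(P)$, with Wu's formula \cite[(3.17)]{Wu04}: under the topological assumptions \textbf{(A1')}, $r_{ess}(P) = \inf_W r(W)$, where the infimum runs over all functions $W$ equivalent to $V$. To close the loop, I would prove the reverse inequality $\delta_V(P) \leq r(W)$ for every equivalent $W$ satisfying $r(W)<1$; taking the infimum over $W$ then yields $\delta_V(P) \leq r_{ess}(P)$, whence equality. Note that the existence of some $W$ with $r(W)<1$ is automatic, since Condition~(\ref{cond-D}) already forces $\delta_V(P)<1$ and hence $r_{ess}(P)<1$.

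For the reverse inequality I would fix an equivalent function $W$, say $c^{-1}V \leq W \leq c\, V$ with $c\geq 1$, and $\varepsilon>0$. From the definition of $r(W)$ as a limsup at infinity, there is a compact set $K\subset\X$ such that $PW(x) \leq (r(W)+\varepsilon)\, W(x)$ for every $x\in K^c$. The hypothesis that $PV$ is bounded on each compact set, combined with $W\leq cV$, then gives $\sup_{x\in K}PW(x)=:C<\infty$. Setting $\delta_0:=r(W)+\varepsilon$, one thus obtains the one-step drift $PW \leq \delta_0\, W + C\, 1_\X$.

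Next I would iterate this inequality using $P1_\X=1_\X$ in a geometric-sum fashion to obtain $P^N W \leq \delta_0^N\, W + \frac{C}{1-\delta_0}\, 1_\X$ for every $N\geq 1$. Sandwiching $V$ by $W$ yields $P^N V \leq c^2\delta_0^N\, V + \frac{cC}{1-\delta_0}\, 1_\X$, and for any $\delta_1 \in (\delta_0, 1)$ one has $c^2 \delta_0^N \leq \delta_1^N$ for $N$ large enough. Hence $P^N V \leq \delta_1^N\, V + d\, 1_\X$ for some $d>0$, so that $\delta_V(P) \leq \delta_1$. Letting first $\delta_1 \downarrow \delta_0$ and then $\varepsilon \downarrow 0$ produces $\delta_V(P) \leq r(W)$, completing the argument.

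The step I expect to be the main obstacle is the clean extraction of the one-step drift from Wu's limsup definition of $r(W)$: it rests on interpreting ``$x\to\infty$'' in the Polish space $\X$ as ``outside compact sets'' (in line with Wu's framework) and on the boundedness of $PW$ on compact sets, which is precisely why the hypothesis ``$PV$ is bounded on each compact set of $\X$'' was included in the statement of the corollary. Everything downstream---the iteration, the transfer of the drift from $W$ back to $V$ via equivalence, and the limit in $\varepsilon$ and $\delta_1$---is routine once that one-step inequality is in hand.
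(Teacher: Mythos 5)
Your proposal is correct and follows essentially the same route as the paper's proof: combine Theorem~\ref{pro-qc-bis} for $r_{ess}(P)\leq\delta_V(P)$, invoke Wu's Theorem~3.11 to produce an equivalent $W$, pass from the limsup condition $r(W)<1$ to a one-step drift $PW\leq(r(W)+\varepsilon)W + C\,1_\X$ via boundedness of $PW$ on compacts, iterate, transfer back to $V$ by equivalence, and absorb the constant $c^2$ by taking $N$ large. The only difference is cosmetic: you fix $W$ and prove $\delta_V(P)\leq r(W)$ before taking the infimum, whereas the paper fixes $r>r_{ess}(P)$ and extracts $W$ with $r(W)<r$ from Wu's formula directly---two phrasings of the same argument.
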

\begin{proof}{} From Theorem~\ref{pro-qc-bis} we know that $r_{ess}(P) \leq \delta_V(P)$. Let $r>r_{ess}(P)$. From \cite[Th.~3.11]{Wu04} there exists an  function $W$ equivalent to $V$ (i.e.~$c^{-1}V\leq W \leq cV$) such that $PW \leq r W + d1_{\X}$ (since $PW$ is bounded on compact sets). Iterating the last inequality shows that there exists $e>0$ such that: $\forall n\geq1,\ P^nW \leq r^n W + e\, 1_{\X}$. Thus we obtain $\forall n\geq1,\ P^nV \leq c^2r^n V + c\, e\, 1_{\X}$, so that we have for any $\rho > r$ and for $N$ sufficiently large: $P^NV \leq \rho^N V + c\, e\, 1_{\X}$. Therefore: $\delta_V(P) \leq \rho$. Since $r$ is arbitrarily close to $r_{ess}(P)$, so is $\rho$. This gives: $\delta_V(P) \leq r_{ess}(P)$. 
\end{proof}
\begin{itemize}
	\item In practice, {\bf (A1')} is deduced from the following conditions (see \cite[p.~265]{Wu04}): 

\indent {\bf (A2')} {\it $\ \ P$, $P_V$ are Feller and $P^{\ell}$, $P_V^{\ell}$ are strongly Feller (for some $\ell\geq 1$).}  

Recall that a nonnegative kernel $T(\cdot,dy)$ on $\X$ satisfying $\sup_{x\in\X} T(x,\X) <\infty$ is said to be Feller (respectively strongly Feller) if, for every bounded continuous (respectively measurable) function $f:\X\r\R$, the function $(Tf)(\cdot) := \int f(y)T(\cdot,dy)$ is continuous. In the general setting of Markov operators, it seems to be difficult to compare our compactness assumption with Hypothesis~{\bf (A1')}, and even with {\bf(A2')}. However, for absolutely continuous kernels, Hypothesis~{\bf(A2')} is stronger than our compactness assumption as explained below. 
\end{itemize}
{\bf Conditions} (K). 
{\it $(\X,d)$ is a separable metric space equipped with its Borel $\sigma$-algebra $\cX$. Every closed ball of $\X$ is compact. For some $x_0\in\X$ we have $\lim_{d(x,x_0)\r+\infty} V(x) = +\infty$. Finally there exist a positive measure $\eta$ on $(\X,\cX)$ and a measurable function $K : \X^2\r[0,+\infty)$ such that: }
\begin{equation} \label{gene-kernel-K-P}
\forall x\in\X, \quad P(x,dy) = K(x,y)\, d\eta(y). 
\end{equation}

\begin{lem} \label{lem-fel-comp}
Assume that Conditions~{\em (K)} hold. If $P^{\ell}$ is strongly Feller for some $\ell\geq1$, then $P^{2\ell}$ is compact from $\cB_0$ to $\cB_V$. 
\end{lem}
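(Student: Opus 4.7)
The plan is to show that every bounded sequence $(f_n)\subset \cB_0$ (say $\|f_n\|_0\leq 1$) admits a subsequence along which $(P^{2\ell}f_n)$ converges in $\cB_V$. As a first step, by the Banach--Alaoglu theorem applied to $L^\infty(\eta)=(L^1(\eta))^*$, I would extract a subsequence (still denoted $(f_n)$) converging weakly-$*$ to some $f\in L^\infty(\eta)$, $\|f\|_\infty\leq 1$. Since $K_\ell(x,\cdot)\in L^1(\eta)$ for every $x\in\X$ (as $\int K_\ell(x,\cdot)\, d\eta = P^\ell 1_\X(x) = 1$), this yields the pointwise limits
$$g_n(x) := (P^\ell f_n)(x) = \int K_\ell(x,y)\, f_n(y)\, d\eta(y) \ \longrightarrow \ \int K_\ell(x,y)\, f(y)\, d\eta(y) =: g(x) \quad (x\in\X),$$
with $|g_n|\leq 1$ pointwise.

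The second step is to reinterpret the strong-Feller hypothesis in $L^1$-language: through the density it means exactly that $x\mapsto K_\ell(x,\cdot)$ is continuous from $\X$ to $L^1(\eta)$ endowed with the weak topology $\sigma(L^1,L^\infty)$. Hence, for each compact set $C\subset\X$, the image $\{K_\ell(x,\cdot) : x\in C\}$ is relatively weakly compact in $L^1(\eta)$, and the Dunford--Pettis theorem then gives that this family is uniformly $\eta$-integrable.

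The key step, and the main technical obstacle, is to upgrade the pointwise convergence $P^{2\ell}f_n\to P^{2\ell}f$ to uniform convergence on each closed ball $B_R := \{x\in\X : d(x,x_0)\leq R\}$, which is compact under~(K). Starting from
$$P^{2\ell}f_n(x) - P^{2\ell}f(x) = \int K_\ell(x,z)\,(g_n(z)-g(z))\, d\eta(z),$$
I would combine uniform integrability of $\{K_\ell(x,\cdot):x\in B_R\}$ (to pass to a set $A$ of finite $\eta$-measure absorbing the tail, and to absorb sets of small $\eta$-measure) with Egorov's theorem applied on $A$ to the pointwise-convergent, uniformly bounded sequence $g_n-g$; this decomposes the integral into three pieces, each uniformly small in $x\in B_R$ for $n$ large. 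The strong-Feller assumption of $P^\ell$ enters twice here --- once producing the weak-$*$ limit behind pointwise convergence of $g_n$, once producing the uniform integrability of $\{K_\ell(x,\cdot)\}$ over compacts, which together amount to a hands-on version of Meyer's ultra-Feller composition result for $P^{2\ell}=P^\ell P^\ell$.

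Finally, the condition $V(x)\to+\infty$ as $d(x,x_0)\to+\infty$ bridges uniform convergence on compacts to convergence in $\cB_V$. Given $\varepsilon>0$, I would choose $R$ so large that $V(x)\geq 2/\varepsilon$ for $x\notin B_R$; then, using $\|P^{2\ell}f_n\|_0 \leq 1$ and $\|P^{2\ell}f\|_0\leq 1$,
$$\frac{|P^{2\ell}f_n(x)-P^{2\ell}f(x)|}{V(x)} \leq \frac{2}{V(x)} \leq \varepsilon \qquad \text{for } x\notin B_R,$$
whereas on $B_R$, since $V\geq 1$, the uniform convergence of the previous step makes the same ratio less than $\varepsilon$ for $n$ large. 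Hence $\|P^{2\ell}f_n - P^{2\ell}f\|_V\to 0$, which gives the desired compactness of $P^{2\ell}:\cB_0\to\cB_V$.
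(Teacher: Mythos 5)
Your proof is correct, but it follows a genuinely different route from the paper's in the crucial step of upgrading pointwise convergence to uniform convergence on compacts. Both proofs begin with the same Banach--Alaoglu extraction producing pointwise convergence of the intermediate sequence $P^\ell f_n$, and both close with the same bridge from uniform convergence on compact balls $B_R$ to $\cB_V$-convergence via $V(x)\to\infty$. In between, you translate the strong Feller hypothesis into $\sigma(L^1,L^\infty)$-continuity of $x\mapsto K_\ell(x,\cdot)$, invoke Dunford--Pettis to get uniform integrability and tightness of $\{K_\ell(x,\cdot):x\in B_R\}$, and then run an Egorov-plus-splitting argument. The paper instead sets $\Delta_k(y):=\sup_{p,q\geq k}|(P^\ell f_{n_q})(y)-(P^\ell f_{n_p})(y)|$ and observes that $(P^\ell\Delta_k)_k$ is a pointwise non-increasing sequence of continuous functions (continuity from strong Feller, monotonicity from positivity of $P^\ell$) converging pointwise to $0$ by dominated convergence; Dini's theorem then yields uniform convergence on each compact in one stroke. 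The paper's Dini argument is shorter and bypasses the Dunford--Pettis/tightness machinery, while your approach makes transparent the link to Meyer's strong-Feller composition ("ultra-Feller") theorem and explains more explicitly why the product $P^\ell\cdot P^\ell$ is the right object. A small inaccuracy worth noting: you say the strong Feller hypothesis "enters twice," the first time in producing the pointwise convergence of $g_n$; in fact that step only uses $K_\ell(x,\cdot)\in L^1(\eta)$ together with the weak-$*$ convergence of $(f_n)$, not the strong Feller property. This does not affect the validity of the argument.
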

Although Lemma~\ref{lem-fel-comp} is a classical statement, we prove it in Appendix~\ref{ap-exist-pi} for completeness. Obviuously we deduce from Lemma~\ref{lem-fel-comp} that, if $P^m$ satisfies Conditions~(K) for some $m\geq1$ and if $P^{\ell}$ is strongly Feller for some $\ell\geq1$, then $P^{2\ell m}$  is compact from $\cB_0$ to $\cB_V$. This shows that the compactness assumption of Theorem~\ref{pro-qc-bis} is fulfilled in all the examples of \cite[Sect.~8]{Wu04}. Furthermore, Lemma~\ref{lem-fel-comp} and Theorem~\ref{pro-qc-bis} allows us to derive the following result. 
\begin{cor} \label{rem-compacite-B0-BV} 
 Assume that Conditions~{\em (K)} and {\bf (WD)} hold and that $P^{\ell}$ is strongly Feller for some $\ell\geq1$. Then $P$ is a power-bounded quasi-compact operator on $\cB_V$ with $$r_{ess}(P) \leq \delta_V(P).$$
\end{cor}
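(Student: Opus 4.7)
The plan is simply to chain together Lemma~\ref{lem-fel-comp} with Theorem~\ref{pro-qc-bis}; no new ideas are needed and there is essentially no obstacle.

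First I would invoke Lemma~\ref{lem-fel-comp}: since Conditions~(K) are assumed and $P^\ell$ is strongly Feller for some $\ell\geq 1$, the lemma gives that the iterate $P^{2\ell}:\cB_0\r\cB_V$ is a compact operator. This supplies exactly the compactness-from-$\cB_0$-to-$\cB_V$ hypothesis required by Theorem~\ref{pro-qc-bis}, with the integer $2\ell$ playing the role of the integer "$\ell$" in the statement of that theorem.

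Next I would apply Theorem~\ref{pro-qc-bis}. Its two hypotheses are (\textbf{WD}) and the compactness of some iterate $P^k:\cB_0\r\cB_V$; both are now available (the former by assumption, the latter from the previous step with $k=2\ell$). The theorem's conclusion is precisely that $P$ is a power-bounded, quasi-compact operator on $\cB_V$ with the bound $r_{ess}(P)\leq\delta_V(P)$, which is exactly what we want.

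Thus the proof is just the two-line concatenation: \emph{By Lemma~\ref{lem-fel-comp}, $P^{2\ell}:\cB_0\r\cB_V$ is compact. Combined with Condition~(\textbf{WD}), Theorem~\ref{pro-qc-bis} then yields that $P$ is power-bounded and quasi-compact on $\cB_V$, with $r_{ess}(P)\leq \delta_V(P)$.}\ There is nothing to be careful about beyond checking that the integer produced by Lemma~\ref{lem-fel-comp} is admissible in Theorem~\ref{pro-qc-bis}, which it clearly is since the latter allows any $\ell\geq 1$.
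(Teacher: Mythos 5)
Your proof is correct and follows exactly the route the paper intends: the text introducing Corollary~\ref{rem-compacite-B0-BV} states that it is derived by combining Lemma~\ref{lem-fel-comp} (which gives compactness of $P^{2\ell}:\cB_0\r\cB_V$) with Theorem~\ref{pro-qc-bis}. Nothing more is needed, and your check that the integer $2\ell$ is admissible is the only small point worth noting.
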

\begin{rem} \label{Rem_Noyau_Continu}
If $P$ is given by (\ref{gene-kernel-K-P}) with $K$ continuous in the first variable, then $P$ is strongly Feller. Indeed, for all $(x,x')\in\X^2$, we have: 
$$\big|(Pf)(x') - (Pf)(x) \big| \leq \int_\X \big|K(x',y) - K(x,y)\big|\, d\eta(y).$$
Since we have $K(\cdot,\cdot)\geq0$, $\int K(\cdot,y)d\eta(y) =1$, and $\lim_{x'\r x} K(x',y) = K(x,y)$, we deduce from Scheffé's theorem that $\lim_{x'\r x} \int_\X |K(x',y) - K(x,y)|\, d\eta(y) = 0$. This proves the desired statement. The previous argument even shows that $\{Pf,\, \|f\|_0\leq 1\}$ is equicontinuous. Observe that, when the last equicontinuity property holds and $V(x) = +\infty$ as $d(x,x_0)\r+\infty$, Ascoli's theorem is another way to prove the compactness of $P : \cB_0\r\cB_V$. 
\end{rem}

Let us introduce a last statement which gives a suitable sum up of the present discussion, in particular for analyzing ours models of Markov chains in Section~\ref{sub-sec-countable}. We know from \cite[p.~270]{Wu04} that if $P$ is  Feller and $P^{\ell}$ is strongly Feller for some $\ell\geq 1$, and if $V$ and $PV$ are continuous, then $P_V$ is Feller and ${P_V}^{\ell}$ is strongly Feller, and thus {\bf (A1')} holds true (see {\bf (A2')}). Therefore, using Corollary~\ref{cor-Wu-egalite} and Lemma~\ref{lem-fel-comp}, we obtain the following statement.
\begin{cor} \label{Cor_ref}
 Assume that $\X$ is a Polish space, that Conditions~{\em (K)} and {\em (\ref{cond-D})} hold, that $V$ and $PV$ are continuous, that $P$ is Feller and $P^{\ell}$ is strongly Feller for some $\ell \ge 1$. Then $P$ is a power-bounded quasi-compact operator on $\cB_V$ with $$r_{ess}(P) = \delta_V(P).$$
\end{cor}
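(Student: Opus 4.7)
The plan is to assemble the three ingredients developed earlier in the subsection, since Corollary~\ref{Cor_ref} is essentially a clean packaging of Lemma~\ref{lem-fel-comp}, the Feller-preservation remark from \cite[p.~270]{Wu04}, and Corollary~\ref{cor-Wu-egalite}.

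First, I would verify the compactness hypothesis needed downstream. Conditions~(K) together with the strong Feller property of $P^{\ell}$ let us apply Lemma~\ref{lem-fel-comp} to conclude that $P^{2\ell}:\cB_0\r\cB_V$ is compact. In particular, the hypotheses of Theorem~\ref{pro-qc-bis} are satisfied, which already gives that $P$ is power-bounded and quasi-compact on $\cB_V$ with $r_{ess}(P)\leq \delta_V(P)$.

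Second, I would establish the reverse inequality by feeding the assumptions into Corollary~\ref{cor-Wu-egalite}. The space $\X$ is Polish (a separable metric space whose closed balls are compact is Polish, but here it is assumed outright). Condition~(WD) is assumed, and the compactness of $P^{2\ell}:\cB_0\r\cB_V$ was just obtained. The topological hypothesis (A1') of \cite[Th.~3.11]{Wu04} follows from the paragraph cited from \cite[p.~270]{Wu04} and recalled just before the statement: since $P$ is Feller, $P^{\ell}$ is strongly Feller, and both $V$ and $PV$ are continuous, the twisted kernel $P_V(x,dy)=V(x)^{-1}V(y)P(x,dy)$ is also Feller and $P_V^{\ell}$ is strongly Feller, i.e.\ (A2') holds, hence (A1'). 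Finally, $PV$ being continuous on $\X$ and the closed balls of $\X$ being compact by (K) ensures that $PV$ is bounded on every compact set, which is the last hypothesis of Corollary~\ref{cor-Wu-egalite}.

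Applying Corollary~\ref{cor-Wu-egalite} then yields $r_{ess}(P)=\delta_V(P)$, and combined with the power-bounded quasi-compactness already obtained from Theorem~\ref{pro-qc-bis}, the statement follows. The main obstacle here is essentially bookkeeping: one must be careful that the strong-Feller hypothesis on $P^{\ell}$ (not on $P_V^{\ell}$) suffices to trigger Lemma~\ref{lem-fel-comp} for the compactness input to our Theorem~\ref{pro-qc-bis}, while the continuity of $V$ and $PV$ is what transfers the strong-Feller property from $P^{\ell}$ to $P_V^{\ell}$ to verify Wu's hypothesis (A1'). Once these two parallel uses of the strong-Feller assumption are cleanly separated, the proof is a direct assembly.
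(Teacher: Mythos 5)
Your proposal is correct and follows exactly the paper's intended assembly: Lemma~\ref{lem-fel-comp} for the compactness input to Theorem~\ref{pro-qc-bis}, the cited fact from \cite[p.~270]{Wu04} to transfer the Feller/strong-Feller properties to $P_V$ and thus obtain (A1'), and Corollary~\ref{cor-Wu-egalite} for the reverse inequality. One small inaccuracy that does not affect the argument: you don't need the closed balls of $\X$ to be compact to know that $PV$ is bounded on every compact set — continuity of $PV$ alone suffices, since a continuous function is automatically bounded on any compact set.
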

%

%============================
\subsection{From quasi-compactness on $\cB_V$ to $V$-geometrical ergodicity} \label{sec-qc-V-geo}
%============================
Recall that a Markov chain $(X_n)_{n\in\N}$ with transition kernel  $P$ is $V$-geometrically ergodic if $P$ has an invariant probability measure $\pi$ such that
\begin{enumerate}[(VG1)]
	\item \label{VG1} $\pi(V)<\infty$
	\item \label{VG2} $\displaystyle\lim_{n \r \infty}\sup_{f\in{\cal B}_V, \|f\|_{V}\le 1}\| P^n f - \pi(f) \|_{V} =0$.  
\end{enumerate}
Let $\Pi$ denotes the rank-one projection defined on $\cB_V$ by: $\Pi f = \pi(f)1_{\X}$. Note that the condition (VG\ref{VG2}) is equivalent to the convergence to $0$ of $\|P^n - \Pi\|_{V}$, the operator norm associated with $\|\cdot\|_V$. Moreover, using $P^n - \Pi = (P - \Pi)^n$, it can be shown that the convergence is geometric, that is, there exists $\rho \in(0,1)$ and $c_\rho\in(0,+\infty)$ such that 
\begin{equation}
\label{strong-ergo} 
\|P^n - \Pi\|_V \leq c_\rho\, \rho^n. 
\end{equation}
Recall that the infimum bound of the positive real numbers $\rho$ such that (\ref{strong-ergo}) holds has been denoted by $\rho_V(P)$ and  called \textit{the convergence rate} of $P$ on $\cB_V$.  

In this subsection we propose a result which makes explicit the relationship between the quasi-compactness of $P$ and the $V$-geometric ergodicity of the Markov chain $(X_n)_{n \in\N}$ with transition kernel $P$. Moreover, we provide an explicit formula for $\rho_V(P)$ in terms of the spectral elements of $P$. A key element is the essential spectral radius $r_{ess}(P)$. For general quasi-compact Markov kernels on $\cB_V$, the result \cite[Th.~4.6]{Wu04} provides interesting additional material on peripheral eigen-elements. 

\begin{theo} \label{CNS-qc-Vgeo}
Let $P$ be a transition kernel which has an invariant probability measure $\pi$ such that $\pi(V)<\infty$. The two following assertions are equivalent:
\begin{enumerate}[(a)]
\item $P$ is $V$-geometrically ergodic.
\item $P$ is a power-bounded quasi-compact operator on $\cB_V$, for which $\lambda=1$ is a simple eigenvalue  (i.e.~$\ker(P-I) = \C\cdot 1_{\X}$) and the unique eigenvalue of modulus one.
\end{enumerate}
Under any of these conditions, we have $\rho_V(P) \ge r_{ess}(P)$. In fact, for ${r_0}\in(r_{ess}(P),1)$, denoting the set of all the eigenvalues $\lambda$ of $P$ such that ${r_0}\leq|\lambda|<1$ by $\cV_{r_0}$, we have:
\begin{itemize}
    \item either $\rho_V(P) \leq {r_0}$ when $\cV_{r_0}=\emptyset$,
    \item or $\rho_V(P) = \max\{|\lambda|,\, \lambda\in\cV_{r_0}\}$ when $\cV_{r_0}\neq\emptyset$.
\end{itemize}
Moreover, if  $\cV_{r_0}=\emptyset$ for all ${r_0}\in(r_{ess}(P),1)$, then $\rho_V(P) =r_{ess}(P)$.
\end{theo}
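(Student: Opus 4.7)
The plan is to prove the two implications separately and then extract $\rho_V(P)$ from the quasi-compact decomposition of $P$ on $\cB_V$.

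For $(a)\Rightarrow(b)$, since $\Pi$ is a rank-one (hence compact) operator on $\cB_V$, the hypothesis $\|P^n-\Pi\|_V\to 0$ together with (\ref{ray-ess}) gives $r_{ess}(P)\le\lim_n\|P^n-\Pi\|_V^{1/n}<1$, so $P$ is quasi-compact. Power-boundedness follows from $\|P^n\|_V\le\|\Pi\|_V+\|P^n-\Pi\|_V$. If $Pf=f$ with $f\in\cB_V$, then $f=P^nf\to\Pi f=\pi(f)1_\X$, so $\ker(P-I)=\C\cdot 1_\X$; and if $Pf=\lambda f$ with $|\lambda|=1$ and $f\ne 0$, the sequence $\lambda^n f=P^n f$ converges to $\pi(f)1_\X$, which forces $\lambda=1$.

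For $(b)\Rightarrow(a)$, write the decomposition (\ref{noyit})-(\ref{noyit-H}) with $r_0\in(r_{ess}(P),1)$, isolating the simple eigenvalue $\lambda_1=1$ (with $p_1=1$):
\begin{equation*}
\cB_V=\C\cdot 1_\X\oplus\bigoplus_{i=2}^m\ker(P-\lambda_i I)^{p_i}\oplus H,\qquad |\lambda_i|\in[r_0,1)\ (i\ge 2),
\end{equation*}
and call $\Pi_0$ the associated rank-one spectral projection. Then $\Pi_0 f=\phi(f)\,1_\X$ for some continuous linear form $\phi$ on $\cB_V$ with $\phi\circ P=\phi$ and $\phi(1_\X)=1$. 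The condition $\pi(V)<\infty$ makes $\pi$ an element of $\cB_V'$ satisfying the same two properties; duality with $\ker(P-I)=\C\cdot 1_\X$ shows that the space of $P^*$-fixed functionals in $\cB_V'$ has dimension one, hence $\phi=\pi$ and $\Pi_0=\Pi$. From $P\Pi=\Pi P=\Pi^2=\Pi$ we get $P^n-\Pi=(P-\Pi)^n$; the iterates on each $\ker(P-\lambda_i I)^{p_i}$ are $\lambda_i^n$ times a polynomial in $n$ of degree less than $p_i$, and (\ref{noyit-H}) bounds the iterates on $H$. This yields $\|P^n-\Pi\|_V=O(\rho^n)$ for any $\rho>\max_{i\ge 2}|\lambda_i|\vee r_0$, which is (VG\ref{VG2}) in the geometric form (\ref{strong-ergo}).

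The inequality $\rho_V(P)\ge r_{ess}(P)$ then follows from $P^n-\Pi=(P-\Pi)^n$, the compactness of $\Pi$ (which yields $r_{ess}(P-\Pi)=r_{ess}(P)$ via (\ref{ray-ess})), and $\limsup_n\|(P-\Pi)^n\|_V^{1/n}=r(P-\Pi)\ge r_{ess}(P-\Pi)$. For the exact formula, fix $r_0\in(r_{ess}(P),1)$ and reuse the decomposition: on $\ker\Pi$ the eigenvalues of modulus $\ge r_0$ are exactly the elements of $\cV_{r_0}$. The upper bound $\rho_V(P)\le\max\{|\lambda|,\,\lambda\in\cV_{r_0}\}\vee r_0$ follows from the block-by-block geometric decay of $(P-\Pi)^n$ (Jordan polynomial factors in $n$ are sub-geometric), while $\rho_V(P)\ge|\lambda|$ for each $\lambda\in\cV_{r_0}$ is obtained by applying $P^n-\Pi$ to an associated eigenvector. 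If $\cV_{r_0}=\emptyset$, this gives $\rho_V(P)\le r_0$; if $\cV_{r_0}\ne\emptyset$, the two bounds combine into equality. Finally, if $\cV_{r_0}=\emptyset$ for every $r_0\in(r_{ess}(P),1)$, letting $r_0\downarrow r_{ess}(P)$ yields $\rho_V(P)\le r_{ess}(P)$, hence equality.

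The main technical point is the identification $\Pi_0=\Pi$ in the second implication: it requires using $\pi(V)<\infty$ to view $\pi$ as an element of $\cB_V'$ and then a duality argument based on the simplicity of $\lambda=1$ for $P$ on $\cB_V$ to conclude that the $P^*$-fixed subspace of $\cB_V'$ is one-dimensional; every other step is a standard consequence of the quasi-compact decomposition.
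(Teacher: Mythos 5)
Your proof is correct and follows essentially the same route as the paper: $(a)\Rightarrow(b)$ via the splitting $\cB_V=\C\,1_\X\oplus\ker\pi$ together with the characterization (\ref{ray-ess}) of $r_{ess}$, and $(b)\Rightarrow(a)$ by Jordan-block estimates on the quasi-compact decomposition, with the lower bound $|\lambda|\le\rho_V(P)$ obtained by testing on eigenvectors. The one place where you add something is the explicit identification $\Pi_0=\Pi$, a step the paper's proof uses tacitly when it writes its display (\ref{dec-V_geo-bis}); your duality argument is correct, but a more elementary route is to observe that $\pi$ vanishes on each $\ker(P-\lambda I)^{p_\lambda}$ with $\lambda\ne1$ (because $\pi\circ(P-\lambda I)^{p_\lambda}=(1-\lambda)^{p_\lambda}\pi$) and on $H$ (because $\pi(h)=\pi(P^nh)\to0$), whence $\pi=\pi\circ\Pi_0=\phi$.
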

From Definition~\ref{def-q-c}, for any ${r_0}\in (r_{ess}(P),1)$, the set of all the eigenvalues of $\lambda$ of $P$ such that ${r_0}\leq|\lambda|\leq1$ is finite. 

\begin{rem}
The property that $P$ admits a spectral gap on $\cB_V$ in the recent paper \cite{KonMey11} corresponds here to the quasi-compactness of $P$ (which is a classical terminology in spectral theory). The spectral gap in \cite{KonMey11} corresponds to the value $1-\rho_V(P)$. Then, \cite[Prop.~1.1]{KonMey11}) is a reformulation of the equivalence of properties (a) and (b) in Theorem~\ref{CNS-qc-Vgeo} under $\psi$-irreducibility and aperiodicity assumptions (see also \cite[Lem.~2.1]{KonMey11}). The last statements in Theorem~\ref{CNS-qc-Vgeo} provide the value of the convergence rate $\rho_V(P)$ for $V$-geometrically ergodic Markov chains from the essential spectral radius $r_{ess}(P)$ and the (possible) eigenvalues $\lambda$ such that $r_{ess}(P) < |\lambda|<1$. 
\end{rem}

\noindent\begin{proof}{} 
Note that we have $\cB_V = \C\, 1_\X\oplus H_0$, with $H_0:=\{f\in\cB_V: \pi(f)=0\}$ (write $f=\pi(f)1_\X + (f-\pi(f)1_\X)$). Since $\pi(V)<\infty$, $\pi$ defines a bounded linear form on $\cB_V$, so that $H_0$ is a closed subspace of $\cB_V$. From the invariance of $\pi$, we obtain that $P(H_0)\subset H_0$. 

Now assume that $(a)$ is fulfilled. Then for any $\rho\in(\rho_V(P),1)$ we have from (\ref{strong-ergo}): 
$$\sup_{h\in H_0,\, \|h\|_V\leq1}\|P^nh\|_V = O(\rho^n).$$
It follows from Definition~\ref{def-q-c}  that $P$ is quasi-compact on $\cB_V$, with $r_{ess}(P)\leq \rho_V(P)$. The fact that $P$ is power-bounded on $\cB_V$ easily follows from $(a)$. 

Conversely, assume that $(b)$ holds and prove that Property~$(a)$, together with the claimed properties on $\rho_V(P)$,  are fulfilled. Since $P$ is Markov and power-bounded on $\cB_V$, we have $r(P)=1$. From Definition~\ref{def-q-c} 
and the assumption on the peripheral eigenvalues of $P$, we obtain for any ${r_0}\in(r_{ess}(P),1)$: 
\begin{equation} \label{dec-V_geo}
\cB_V = \C\, 1_\X\oplus \big(\oplus_{\lambda\in{\cal V}_{r_0}}\ker(P-\lambda I)^{p_\lambda}\big)\oplus H,
\end{equation}
where 
$H$ is a closed $P$-invariant subspace of $\cB_V$ such that $\sup_{h\in H,\, \|h\|_V\leq1}\|P^nh\|_V = O({r_0}^n)$. 
Let $f\in\cB_V$. Then we have 
\begin{equation} \label{dec-V_geo-bis} 
f-\pi(f)1_\X = \sum_{\lambda\in{\cal V}_{r_0}} f_\lambda + h,
\end{equation}
with $f_\lambda\in\ker(P-\lambda I)^{p_\lambda}$ and $h\in H$, and there exist some constants $c_\lambda$ and $c_H$ (independent of $f$) such that $\|f_\lambda\|_V\leq c_\lambda\|f\|_V$ and $\|h\|_V\leq c_H\|f\|_V$ (since the projections associated with the decomposition (\ref{dec-V_geo}) are continuous). 

When $\cV_{r_0}=\emptyset$, then (\ref{dec-V_geo-bis}) yields 
\begin{equation*} 
\|P^nf-\pi(f)1_\X\|_V = \|P^n\big(f-\pi(f)1_\X\big)\|_V \leq O({r_0}^n)\, \|f\|_V.
\end{equation*}
Thus Property~$(a)$ holds and $\rho_V(P) \leq {r_0}$. If 
$\cV_{r_0}=\emptyset$ for all ${r_0}\in(r_{ess}(P),1)$, then $\rho_V(P) \leq r_{ess}(P)$, so that $\rho_V(P) = r_{ess}(P)$ from the proof of $(a)\ \Rightarrow (b)$.

When $\cV_{r_0}\neq\emptyset$, define $\nu := \max\{|\lambda|,\, \lambda\in\cV_{r_0}\}$. We have for $\lambda\in\cV_{r_0}$ and $n\geq p_\lambda$ 
\begin{eqnarray*}
\|P^n f_\lambda\|_V = \|(P-\lambda I+\lambda I)^n f_\lambda\|_V 
&\leq& \sum_{k=0}^{p_\lambda-1} \binom{n}{k} |\lambda|^{n-k}\|(P-\lambda I)^k\|_V\, \|f_\lambda\|_V \\
&\leq& c_\lambda\bigg(\sum_{k=0}^{p_\lambda-1} \binom{n}{k} |\lambda|^{n-k}\|(P-\lambda I)^k\|_V\bigg)\|f\|_V.
\end{eqnarray*}
Then for each $k=0,\ldots,p_\lambda-1$, we have $\binom{n}{k}|\lambda|^{n-k} = O(n^k|\lambda|^n) \leq O(n^k\nu^n)$. Thus $\|P^n f_\lambda\|_V = O(\rho^n)\, \|f\|_V$ for any $\rho\in (\nu,1)$. From (\ref{dec-V_geo-bis}) and ${r_0}\leq \rho$, we obtain: 
$$
\|P^nf-\pi(f)1_\X\|_V 
\leq \sum_{\lambda\in{\cal V}_{r_0}} \|P^nf_\lambda\| + \|P^n h\|_V \leq \big(O(\rho^n) + O({r_0}^n)\big)\, \|f\|_V = O(\rho^n)\, \|f\|_V. \label{O-rho-r}
$$
Since $\rho\in (\nu,1)$ is arbitrary, this gives $\rho_V(P) \leq \nu$. Conversely, given any $\lambda\in\cV_{r_0}$ and $f\in\cB_V$ such that $Pf=\lambda f$, we have $\pi(f)=0$ (use the invariance of $\pi$), and from $P^nf=\lambda^n f$ and the definition of $\rho_V(P)$, we easily deduce that $|\lambda|\leq\rho_V(P)$. Thus $\nu \leq \rho_V(P)$. 
\end{proof}

The next lemma provides the existence of the $P$-invariant probability measure under the weak drift  Condition~(\ref{cond-D}). This statement (see e.g.~\cite{MeyTwe93} for similar results) will be of interest in our examples for the use of Theorem~\ref{CNS-qc-Vgeo}. For completeness, a proof is given in Appendix~\ref{ap-exist-pi}.
\begin{lem} \label{lem-exist-proba-inv}
Assume that $(\X,d)$ is a separable complete metric space and that $V : \X\r[1,+\infty)$ is continuous and such that the set $\{V\leq \alpha\}$ is compact for every $\alpha\in(0,+\infty)$. Under Condition~\emph{(\ref{cond-D})}, there exists a $P$-invariant probability measure $\pi$ such that $\pi(V)<\infty$. 
\end{lem}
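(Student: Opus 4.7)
The plan is a classical Krylov--Bogolyubov construction in which tightness is furnished by the weak drift~(\ref{cond-D}) together with the compactness of the sublevel sets of $V$. I would fix an arbitrary $x_0\in\X$ and form the Cesàro empirical measures
$$\mu_n := \frac{1}{n}\sum_{k=0}^{n-1} P^{k}(x_0,\cdot), \qquad n\geq 1,$$
each being a Borel probability on $\X$.

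The first main step is to establish a uniform $V$-moment bound on $(\mu_n)$. Iterating~(\ref{cond-D}) gives $P^{jN}V \leq \delta^{jN}V + d(1-\delta^N)^{-1}\mathbf{1}_{\X}$, so $P^{jN}V(x_0)\leq V(x_0) + d/(1-\delta^N)$. For intermediate indices $n=jN+r$ with $0\leq r<N$, the standing hypothesis $\|P\|_{\cB_V}=\|PV\|_V<\infty$ yields $P^rV\leq \|PV\|_V^{\,r}V$, hence $\sup_n P^nV(x_0)<\infty$ and therefore $C_0:=\sup_n \mu_n(V)<\infty$. By Markov's inequality $\mu_n(\{V>\alpha\})\leq C_0/\alpha$, and since each sublevel set $\{V\leq \alpha\}$ is compact by hypothesis, the family $(\mu_n)$ is tight on the Polish space $\X$. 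Prokhorov's theorem then extracts a subsequence $(\mu_{n_k})$ converging weakly to some Borel probability $\pi$, and the Portmanteau theorem applied to the continuous nonnegative function $V$ yields $\pi(V)\leq \liminf_k \mu_{n_k}(V)\leq C_0<\infty$.

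For invariance, I would invoke the telescoping identity
$$\mu_n P - \mu_n \;=\; \frac{1}{n}\bigl(P^n(x_0,\cdot)-\delta_{x_0}\bigr),$$
which gives $|(\mu_n P)(f)-\mu_n(f)|\leq 2\|f\|_0/n\to 0$ for every bounded measurable $f$. For $f$ bounded continuous with $Pf$ continuous, passing to the limit along $n_k$ yields $\pi(Pf)=\pi(f)$, and since the bounded continuous functions determine Borel probabilities on a Polish space, this delivers the $P$-invariance of~$\pi$.

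The main obstacle is precisely this last step: weak convergence transfers cleanly only to $\pi$-a.e.\ continuous test functions, so some Feller-type continuity for $P$ is morally needed in order to conclude $\pi(Pf)=\pi(f)$ for every bounded continuous $f$. One way around this is to work directly in the dual $\cB_V'$ via Banach--Alaoglu: any weak-$*$ accumulation point $L$ of the functionals $f\mapsto \mu_n(f)$ satisfies $L\circ P = L$, $L\geq 0$, $L(\mathbf{1}_{\X})=1$ and $L(V)\leq C_0$, and the compactness of the sublevel sets $\{V\leq\alpha\}$ combined with $L(V)<\infty$ permits one to recover $\sigma$-additivity, so that $L$ is induced by a genuine probability measure $\pi$ with $\pi(V)<\infty$ and $\pi P = \pi$, as required.
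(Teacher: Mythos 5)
Your tightness argument and the derivation of $\pi(V)<\infty$ are correct and essentially coincide with the paper's proof (the paper also forms the Cesàro averages from a point mass, uses the iterated drift bound to get $\sup_n (P^nV)(x_0)<\infty$, applies Prokhorov, and then obtains $\pi(V)<\infty$ by truncating $V$ and passing to the monotone limit, which is the same content as your Portmanteau step).

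The gap you flag in the invariance step is genuine, and it is present in the paper's own proof as well: the paper merely asserts that the weak limit is ``clearly $P$-invariant,'' but without any continuity (Feller) hypothesis on $P$ this does not follow from weak convergence, exactly as you observe. However, your proposed workaround via Banach--Alaoglu does \emph{not} repair this. A weak-$*$ accumulation point $L\in\cB_V'$ with $L\geq 0$, $L(1_\X)=1$, $L(V)<\infty$, and $L\circ P=L$ need not be $\sigma$-additive, and the compactness of the sublevel sets of $V$ does not force it to be. Indeed the conclusion of the lemma can fail outright under its stated hypotheses: take $\X=[0,\infty)$, $V(x)=1+x$, and the deterministic kernel $P(x,\cdot)=\delta_{g(x)}$ with $g(x)=x/2$ for $x>0$ and $g(0)=2$. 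Then $V$ is continuous with compact sublevel sets, $(\textbf{WD})$ holds (e.g.\ $PV\leq \tfrac12 V + 1$), yet $P$ admits no invariant probability measure: invariance would force $\pi(\{0\})=0$ while also $\pi((0,\varepsilon])=\pi((0,2\varepsilon])$ for small $\varepsilon$, hence $\pi=0$. So no argument --- measure-theoretic or functional-analytic --- can close the gap under the lemma's literal hypotheses; a Feller-type assumption on $P$ is genuinely required. In the paper's intended applications (discrete state spaces, where every function is continuous, and IFSs with Lipschitz maps, where $Pf$ is continuous by dominated convergence), the Feller property holds, so the lemma is used in contexts where the Krylov--Bogolyubov step is valid; but as stated the lemma, and the paper's proof of it, are missing this hypothesis, and your second paragraph should not claim that $\sigma$-additivity can be recovered in general.
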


%=========================================
\subsection{Study of characteristic functions} \label{subsec-taille-fct-propre}
%==========================================
As explained in Introduction, the next theorem plays an important role in our work. 
\begin{theo} \label{pro-tail-fct-propre}
Assume that the weak drift condition \emph{(\ref{cond-D})} holds true.
If $\lambda\in\C$ is such that $\delta \leq |\lambda| \leq 1$, with $\delta$ given in \emph{(\ref{cond-D})}, and if $f\in\cB_V\cap\ker(P-\lambda I)^p$ for some $p\in\N^*$, then there exists $c\in(0,+\infty)$ such that 
$$|f| \leq c\, V^{\frac{\ln|\lambda|}{\ln\delta}}\, (\ln V)^{\frac{p(p-1)}{2}}.$$  
\end{theo}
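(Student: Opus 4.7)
The plan is to induct on $p$ using the Jordan-type expansion
$P^n f = \sum_{k=0}^{p-1}\binom{n}{k}\lambda^{n-k} f_k$,
where $f_k := (P - \lambda I)^k f$ (so $f_p = 0$ and the formula holds for $n \ge p-1$; it follows from expanding $(\lambda I + (P - \lambda I))^n$ with the binomial theorem, since $\lambda I$ and $P - \lambda I$ commute). This expansion will be combined with an iterated form of~(\ref{cond-D}) and an $x$-dependent choice of the integer $n$. As a preliminary step I would iterate~(\ref{cond-D}): a telescoping argument gives $P^{kN}V \le \delta^{kN} V + d/(1 - \delta^N)$ on multiples of $N$, and a final composition with $P^r$ for $0 \le r < N$, using $\|P^r V\|_V < \infty$, yields constants $C, D > 0$ such that
$$ (P^n V)(x) \le C\, \delta^n\, V(x) + D \qquad \text{for every } n \in \N,\ x \in \X. $$

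For the base case $p = 1$, where $Pf = \lambda f$, I would write $|\lambda|^n |f| = |P^n f| \le P^n|f| \le \|f\|_V(C\delta^n V + D)$ and, at each $x$, take $n(x) = \lceil \ln V(x)/\ln(1/\delta) \rceil$. This yields $\delta^{n(x)} V(x) \le 1$ and $|\lambda|^{-n(x)} \le |\lambda|^{-1} V(x)^\beta$ with $\beta = \ln|\lambda|/\ln\delta$, so both contributions are $O(V(x)^\beta)$ and $|f| \le c V^\beta$. For the inductive step ($p \ge 2$), each $f_k$ with $k \ge 1$ lies in $\cB_V \cap \ker(P - \lambda I)^{p-k} \subset \cB_V \cap \ker(P - \lambda I)^{p-1}$, so by the inductive hypothesis $|f_k| \le c_k V^\beta (\ln V)^{q_{p-1}}$ with $q_{p-1} = (p-1)(p-2)/2$. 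Rearranging the Jordan identity, taking moduli, dividing by $|\lambda|^n$ and applying the iterated drift to the $P^n|f|$ term produces
$$ |f(x)| \le \|f\|_V\bigl(C(\delta/|\lambda|)^n V(x) + D|\lambda|^{-n}\bigr) + V(x)^\beta (\ln V(x))^{q_{p-1}} \sum_{k=1}^{p-1} c_k \binom{n}{k} |\lambda|^{-k}. $$
Keeping the same $n = n(x) \le C_1(1 + \ln V(x))$, the bracketed terms are $O(V(x)^\beta)$ as in the base case; the bounds $\binom{n}{k} \le C_2 (\ln V(x))^k$ for $V(x) \ge e$ and $|\lambda|^{-k} \le |\lambda|^{-(p-1)}$ (bounded since $|\lambda| \ge \delta > 0$) make the last sum $O((\ln V(x))^{p-1})$. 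Altogether $|f(x)| \le c\, V(x)^\beta (\ln V(x))^{q_{p-1} + (p-1)} = c\, V(x)^\beta (\ln V(x))^{p(p-1)/2}$, which closes the induction; the values $V(x) < e$ are absorbed into $c$ via the trivial estimate $|f| \le \|f\|_V V$.

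The main obstacle will be the delicate balance in the induction. The iterated drift essentially forces $n \sim \ln V / \ln(1/\delta)$ so that $\delta^n V$ stays bounded, and at this scale each binomial factor $\binom{n}{k}$ contributes precisely $(\ln V)^k$. Applying the inductive bound uniformly in $k \ge 1$ with the single exponent $q_{p-1}$ (rather than with the finer $q_{p-k}$) is exactly what yields the recursion $q_p = q_{p-1} + (p-1)$, and hence the stated exponent $p(p-1)/2$ appearing in the bound.
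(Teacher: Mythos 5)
Your proof is correct and follows essentially the same route as the paper's: iterate the weak drift condition to get $(P^nV)(x)\le C\delta^n V(x)+D$, choose $n(x)\approx -\ln V(x)/\ln\delta$ so that $\delta^{n(x)}V(x)$ is bounded while $|\lambda|^{-n(x)}\le c\,V(x)^{\ln|\lambda|/\ln\delta}$, prove the base case $p=1$ from $P^nf=\lambda^nf$ and positivity of $P$, and close the induction via the binomial expansion of $(\lambda I+(P-\lambda I))^n$ together with the bound $\binom{n(x)}{k}=O((\ln V(x))^k)$. The only cosmetic differences from the paper are that you take the ceiling rather than the floor for $n(x)$, you index the induction from $p-1$ to $p$ rather than from $p$ to $p+1$, and the paper isolates the $n(x)$-computation as a separate lemma applied to arbitrary $f\in\cB_V$; the substance and the bookkeeping leading to the exponent $q_{p-1}+(p-1)=p(p-1)/2$ are the same.
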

The proof of Theorem~\ref{pro-tail-fct-propre} is based on the following lemma.
\begin{lem} \label{lem-ineq-lem-tail-fct}
Let $\lambda\in\C$ be such that $\delta \leq |\lambda| \leq 1$. Then 
\begin{equation} \label{ineq-lem-tail-fct}
\forall f\in\cB_V,\ \exists c\in(0,+\infty),\ \forall x\in\X,\quad |\lambda|^{-n(x)} \big|(P^{n(x)}f)(x)\big|  \leq c\, V(x)^{\frac{\ln|\lambda|}{\ln\delta}}
\end{equation}
with, for any $x\in\X$, $n(x) := \big\lfloor \frac{-\ln V(x) }{\ln\delta}\big\rfloor$, where $\lfloor \cdot\rfloor$ denotes the integer part function. 
\end{lem}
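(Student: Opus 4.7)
The plan is to first upgrade the weak drift condition~(\ref{cond-D}), which only controls $P^N V$, into a geometric bound valid for \emph{every} iterate $P^k$, and then to exploit the specific choice of $n(x)$ so that both $P^{n(x)} V(x)$ and $|\lambda|^{-n(x)}$ become expressible as (bounded multiples of) powers of $V(x)$.

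First I would establish that there exist constants $C,D\in(0,+\infty)$ with
$$\forall k\in\N,\qquad P^k V \;\leq\; C\,\delta^k\, V + D\cdot 1_\X.$$
Applying $P^N$ to (\ref{cond-D}) iteratively yields $P^{mN}V \leq \delta^{mN}V + d(1-\delta^N)^{-1}\cdot 1_\X$ for every $m\in\N$. For general $k=mN+r$ with $0\leq r<N$, I would apply $P^{mN}$ to the trivial inequality $P^r V \leq \|P^rV\|_V\cdot V$; the required constants then arise by taking the maximum of $\|P^rV\|_V\,\delta^{-r}$ over $0\leq r<N$, all of which are finite by the standing assumption $\|PV\|_V<\infty$. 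For any $f\in\cB_V$, the positivity of $P$ then gives $|P^k f|\leq \|f\|_V\, P^kV$, hence
$$\bigl|(P^k f)(x)\bigr| \;\leq\; \|f\|_V\bigl(C\,\delta^k V(x) + D\bigr).$$

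To conclude, I would specialize $k=n(x)$. The definition $n(x)=\lfloor -\ln V(x)/\ln\delta\rfloor$ gives (using $\ln\delta<0$ when translating the floor inequalities) $1\leq \delta^{n(x)}V(x)\leq 1/\delta$, so the previous display bounds $|(P^{n(x)}f)(x)|$ by an absolute constant $c_1:=\|f\|_V(C/\delta+D)$. Finally, from $\delta\leq|\lambda|\leq 1$ one has $\beta:=\ln|\lambda|/\ln\delta\in[0,1]$ and $\ln|\lambda|\leq 0$; multiplying the inequality $n(x)\leq -\ln V(x)/\ln\delta$ by the nonpositive number $\ln|\lambda|$ reverses it and yields $|\lambda|^{-n(x)}\leq V(x)^{\beta}$. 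Combining the two bounds proves the lemma with $c=c_1$.

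The whole argument is essentially an exercise in book-keeping, so no step is a serious obstacle; the only subtlety to watch is that both $\ln\delta$ and $\ln|\lambda|$ are $\leq 0$, which is precisely what makes the choice $n(x)=\lfloor -\ln V(x)/\ln\delta\rfloor$ correct: it is the largest integer $k$ for which the ``bad'' term $\delta^k V(x)$ in the upgraded drift inequality remains bounded, and simultaneously it is small enough to keep $|\lambda|^{-k}$ controlled by $V(x)^\beta$.
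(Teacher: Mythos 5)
Your proof is correct and takes essentially the same approach as the paper: iterate the weak drift condition to obtain a uniform bound $P^kV\leq C\delta^k V+D$ for all $k$, then specialize $k=n(x)$. Your bookkeeping is in fact slightly cleaner than the paper's, since you observe directly that $1\leq\delta^{n(x)}V(x)\leq 1/\delta$, which makes $|(P^{n(x)}f)(x)|$ bounded by a constant, and then handle the factor $|\lambda|^{-n(x)}\leq V(x)^{\ln|\lambda|/\ln\delta}$ separately, whereas the paper multiplies through first and carries both terms.
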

\begin{proof}{}
First note that the iteration of (\ref{cond-D}) gives 
\begin{equation} \label{itération-cond-D}
\forall k\geq 1,\quad P^{kN}V \leq \delta^{kN}\, V + d\big(\sum_{j=0}^{k-1}\delta^{jN}\big)\, 1_{\X} \leq \delta^{kN}\, V +  
\frac{d}{1-\delta^N}\, 1_{\X}.
\end{equation}
Let $g\in\cB_V$ and $x\in\X$. Using (\ref{itération-cond-D}), the positivity of $P$ and $|g|\leq \|g\|_V\, V$, we obtain with $b := d/(1-\delta^N)$: 
\begin{equation} \label{val-pro}
\forall k\geq1,\quad |(P^{kN}g)(x)| \leq (P^{kN}|g|)(x) \leq \|g\|_V\, (P^{kN}V)(x) \leq  \|g\|_V\big(\delta^{kN}V(x) + b\big).
\end{equation}
The previous inequality is also fulfilled with $k=0$. Next, let $f\in\cB_V$ and $n\in\N$. Writing $n = kN+r$, with $k\in\N$ and $r\in\{0,1,\ldots,N-1\}$, and applying (\ref{val-pro}) to $g:=P^rf$, we obtain with $\xi:=\max_{0\leq\ell\leq N-1}\|P^\ell f\|_V$ (use $P^nf = P^{kN}(P^rf)$): 
\begin{equation} \label{val-pro-bis}
\big|(P^nf)(x)\big| \leq \xi\big[\delta^{kN}V(x) + b\big] \leq 
\xi\big[\delta^{-r}\big( \delta^{n}V(x) + b\big)\big] \leq \xi\,\delta^{-N}\big(\delta^{n}V(x) + b\big).
\end{equation}
Using the inequality $$-\frac{\ln V(x)}{\ln\delta} -1 \leq n(x) \leq -\frac{\ln V(x)}{\ln\delta}$$ 
and the fact that  $\ln\delta \leq \ln|\lambda|\leq 0$, Inequality (\ref{val-pro-bis}) with $n:=n(x)$ gives: 
\begin{eqnarray*}
|\lambda|^{-n(x)} \big|(P^{n(x)}f)(x)\big| &\leq& \xi\, \delta^{-N}\bigg(\big(\delta|\lambda|^{-1}\big)^{n(x)}\, V(x) + b\, |\lambda|^{-n(x)} \bigg) \\
& &= \xi\, \delta^{-N}\bigg(e^{n(x)(\ln\delta - \ln|\lambda|)}\, e^{\ln V(x)} + b\, e^{-n(x)\ln|\lambda|} \bigg) \\
&\leq&  \xi\, \delta^{-N}\bigg(e^{(\frac{\ln V(x) }{\ln\delta}+1)\, (\ln|\lambda|-\ln\delta)}\, e^{\ln V(x)} + 
b\, e^{\frac{\ln V(x)}{\ln\delta}\ln|\lambda|} \bigg)  \\
&& = \xi\, \delta^{-N}\bigg(e^{\frac{\ln|\lambda|}{\ln\delta}\ln V(x)}\, e^{\ln|\lambda|-\ln\delta}\,  + b\, 
V(x)^{\frac{\ln|\lambda|}{\ln\delta}}\bigg) \\
&& =\xi\, \delta^{-N} \big(e^{\ln|\lambda|-\ln\delta}+b\big)\, V(x)^{\frac{\ln|\lambda|}{\ln\delta}}.
\end{eqnarray*}
This gives the desired conclusion with $c=\xi\, \delta^{-N} (e^{\ln|\lambda|-\ln\delta}+b)$. 
\end{proof}
\begin{proof}{ of Theorem~\ref{pro-tail-fct-propre}}
 If $f\in\cB_V\cap\ker(P-\lambda I)$, then $|\lambda|^{-n(x)} |(P^{n(x)}f)(x)| = |f(x)|$, so that (\ref{ineq-lem-tail-fct}) gives the expected conclusion when $p=1$. Next, let us  proceed by induction. Assume that the conclusion of Theorem~\ref{pro-tail-fct-propre} holds for some $p\geq 1$. Let $f\in\cB_V\cap\ker(P-\lambda I)^{p+1}$. We can write 
\begin{equation} \label{inter-tail-1}
P^nf = (P-\lambda I+\lambda I)^n f = \lambda^{n}\, f + \sum_{k=1}^{\min(n,p)} \binom{n}{k} \lambda^{n-k}\,  (P-\lambda I)^k f.
\end{equation}
For $k\in\{1,\ldots,p\}$, we have $f_k := (P-\lambda I)^k f \in\ker(P-\lambda I)^{p+1-k} \subset \ker(P-\lambda I)^{p}$, thus we have from the induction hypothesis : 
\begin{equation} \label{inter-tail-2}
\exists c'\in(0,+\infty),\ \forall k\in\{1,\ldots,p\}, \ \forall x\in\X, \quad |f_k(x)| \leq c'\, V(x)^{\frac{\ln|\lambda|}{\ln\delta}}\, (\ln V(x))^{\frac{p(p-1)}{2}}.
\end{equation}
Now, we obtain from (\ref{inter-tail-1}) (with $n:=n(x)$), (\ref{inter-tail-2}) and Lemma~\ref{lem-ineq-lem-tail-fct} that for all $x\in\X$: 
\begin{eqnarray*}
|f(x)| &\leq& |\lambda|^{-n(x)} \big|(P^{n(x)}f)(x)\big| + c'\, V(x)^{\frac{\ln|\lambda|}{\ln\delta}}\, (\ln V(x))^{\frac{p(p-1)}{2}}\, |\lambda|^{-\min(n,p)} \sum_{k=1}^{\min(n,p)} \binom{n(x)}{k} \\
&\leq&  c\, V(x)^{\frac{\ln|\lambda|}{\ln\delta}} + c_1\, V(x)^{\frac{\ln|\lambda|}{\ln\delta}}\, (\ln V(x))^{\frac{p(p-1)}{2}}\, n(x)^p \\
&\leq& c_2 V(x)^{\frac{\ln|\lambda|}{\ln\delta}}\, (\ln V(x))^{\frac{p(p-1)}{2} + p}
\end{eqnarray*}
with some constants $c_1,c_2\in(0,+\infty)$ independent of $x$. Since $p(p-1)/2 + p = p(p+1)/2$, this gives the expected result. 
\end{proof}

To conclude this section, notice that the $V$-geometrical ergodicity clearly implies Condition~(\ref{cond-D}). However Condition~(\ref{cond-D}) is not sufficient for $P$ to be $V$-geometrically ergodic, even if $P$ is assumed to be compact from $\cB_0$ from $\cB_V$. In fact, the previous statements provide the following procedure to check the $V$-geometric ergodicity of $P$ and to compute an upper bound for its convergence rate $\rho_V(P)$. Let $P$ be a transition kernel with an invariant probability measure $\pi$ such that $\pi(V)<\infty$. Theorem~\ref{pro-qc-bis} shows that, if $P$ (or some iterate) is compact from $\cB_0$ into $\cB_V$ and satisfies the weak drift  condition~(\ref{cond-D}), then $P$ is quasi-compact on $\cB_V$ and $r_{ess}(P)\le \delta_V(P)$. Next Theorem~\ref{CNS-qc-Vgeo} ensures that the $V$-geometric ergodicity of $P$ can be deduced from quasi-compactness  provided that the following properties are satisfied : 
\begin{enumerate}[(i)]
\item   $\lambda=1$ is a simple eigenvalue of $P$ on $\cB_V$, namely $\ker(P-I) = \C\cdot 1_\X$;
\item $\lambda=1$ is the unique eigenvalue of $P$ of modulus one on $\cB_V$. 
\end{enumerate} 
Finally Theorem~\ref{pro-tail-fct-propre} can be useful to check (i)-(ii),  and in a more general way to investigate the sets $\cV_{r_0}$ of eigenvalues of $P$ given in Theorem~\ref{CNS-qc-Vgeo} in order to obtain an upper bound for the convergence rate $\rho_V(P)$. This procedure is applied in the next section. 
%

%============================
\section{Applications to discrete Markov chains} \label{sub-sec-countable}
%=============================
In this section, we are concerned with  discrete Markov chains. For the sake of simplicity, we assume that $\X:=\N$ throughout the section. Let $P$ be a Markov kernel on $\N$. The main focus is on the estimation of the essential spectral radius $r_{ess}(P)$ from Condition~(\ref{cond-D}): a general statement is derived from Corollary~\ref{rem-compacite-B0-BV} in Subsection~\ref{sub-gene-qc}, and applications to random walks (RW) with bounded state-dependent increments are presented in Subsection~\ref{ex-reflecting-nonhom}. 

For irreducible and aperiodic discrete Markov chains, criteria for the $V$-geometrical ergodicity are well-known from the literature using, either the equivalence between geometric ergodicity and $V$-geometric ergodicity of $\N$-valued Markov chains \cite[Prop.~2.4]{HorSpi92}, or the strong drift Condition~(\ref{inequality-drift}) with a small set $S$ \cite{MeyTwe93}. In Subsection~\ref{sub-cond-I-A}, we just explain as an alternative way how the quasi-compactness combined with irreducibility and aperiodicity conditions provide the $V$-geometrical ergodicity. Finally the procedure mentioned at the end of the previous section (see (i))-(ii)) is applied to compute the convergence rate of some random walks (see Example~\ref{ex-speksma-1} and Subsection~\ref{sub-basic-ex-revis}). 
Such computations are not reported in Wu's work, excepted for the examples \cite[Ex.~8.3-8.4]{Wu04} corresponding to the specific case $P(0,0):=1-q$ in Subsection~3.4. Wu obtained the rate $\rho_V(P)$ for these two examples as special instances of discrete reflected random walks. These processes, also called Lindley's random walks, are investigated in Subsection~5.2, in which we obtain the rate of convergence with explicit constant for general discrete Lindley's random walks.

\subsection{Quasi-compactness of discrete Markov chains} \label{sub-gene-qc}
Let $P=(P(i,j))_{i,j\in\N^2}$ be a Markov kernel on $\N$. The function $V:\N\r[1,+\infty)$ is assumed to satisfy 
$$\lim_nV(n) = +\infty \quad \text{ and } \quad \sup_{n\in\N}\frac{(PV)(n)}{V(n)}<\infty.$$ 
\begin{cor} \label{cor-qc-bis} 
The two following conditions are equivalent: 
\begin{enumerate}[(a)]
	\item Condition~\emph{(\ref{cond-D})} holds with $V$; 
	\item $\displaystyle L := \inf_{N\geq 1}(\ell_N)^{\frac{1}{N}}< 1$ where 
	$\ell_N := \limsup_{n\r+\infty}(P^NV)(n)/V(n)$.
\end{enumerate}
In this case, $P$ is power-bounded and quasi-compact on $\cB_V$ with 
$$r_{ess}(P) = \delta_V(P) = L. $$
\end{cor}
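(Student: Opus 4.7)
The plan is to first establish the identity $\delta_V(P)=L$, which then yields the equivalence (a)$\Leftrightarrow$(b) since both conditions reduce to $L<1$. The inequality $L\le\delta_V(P)$ is immediate: any $\delta\in[0,1)$ witnessing $\delta_V(P)$ satisfies $P^NV\le\delta^NV+d\,1_\X$ for some $N$, and dividing by $V(n)$ and letting $n\to+\infty$ (using $V(n)\to+\infty$) gives $\ell_N\le\delta^N$, hence $(\ell_N)^{1/N}\le\delta$. For the reverse direction, given any $\delta\in(L,1)$, I would pick $N$ with $\ell_N<\delta^N$, so that $(P^NV)(n)\le\delta^NV(n)$ for all $n$ large enough. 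The finitely many remaining indices contribute only a finite additive constant (using $\|P^NV\|_V<\infty$, a routine consequence of $\|PV\|_V<\infty$), giving the weak drift inequality with parameter $\delta$; thus $\delta_V(P)\le\delta$, and letting $\delta\downarrow L$ yields $\delta_V(P)\le L$.

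Next I would invoke the automatic compactness of $P:\cB_0\r\cB_V$ on $\N$ recalled at the opening of Section~\ref{sub-sec-countable}: since $V(n)\to+\infty$, a bounded sequence in $\cB_0$ admits, by diagonal extraction, a pointwise-convergent subsequence whose images under $P$ converge pointwise by dominated convergence; for large indices the division by $V(n)$ annihilates the bounded numerator, while only finitely many small indices remain, where pointwise convergence suffices. Combined with (a)$=$(b), Theorem~\ref{pro-qc-bis} then delivers that $P$ is power-bounded and quasi-compact on $\cB_V$ with $r_{ess}(P)\le\delta_V(P)=L$.

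The final and hardest step is the reverse inequality $r_{ess}(P)\ge L$, which is not a consequence of the drift condition alone. Rather than attack it head-on, the plan is to appeal to Corollary~\ref{Cor_ref}, whose hypotheses reduce to trivialities on $\N$: it is a Polish space with compact (finite) closed balls, Conditions~(K) hold with counting measure as $\eta$ and kernel $K(i,j):=P(i,j)$, every function on $\N$ is continuous so that $V$ and $PV$ are continuous and both $P$ and $P^{\ell}$ are Feller and strongly Feller (trivially, for any $\ell\ge 1$), and the weak drift condition is now in force. Corollary~\ref{Cor_ref} then reads $r_{ess}(P)=\delta_V(P)=L$, closing the argument. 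The principal obstacle is precisely this lower bound $r_{ess}(P)\ge\delta_V(P)$, which rests on the Wu-type ingredient packaged into Corollary~\ref{Cor_ref} rather than on any elementary drift-based computation.
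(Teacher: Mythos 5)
Your proof is correct and follows essentially the same route as the paper: the elementary derivation of $\delta_V(P)=L$, the automatic compactness of $P:\cB_0\r\cB_V$ on $\N$, and the appeal to Corollary~\ref{Cor_ref} (with the metric $d(i,j)=|i-j|$ on $\N$, under which everything is continuous and Feller trivially) for the equality $r_{ess}(P)=\delta_V(P)$. The only inessential difference is that your intermediate invocation of Theorem~\ref{pro-qc-bis} is subsumed by Corollary~\ref{Cor_ref}, which the paper uses directly.
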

\begin{proof}{}
That $P$ is power-bounded and quasi-compact on $\cB_V$ with $r_{ess}(P) = \delta_V(P)$ under (\ref{cond-D}) follows  from Corollary~\ref{Cor_ref} since $P$ is strongly Feller in the discrete state space case. 

Let us prove the equivalence \emph{(a)}$\Leftrightarrow $\emph{(b)}, as well as the equality $\delta_V(P) = L$. 
First, Condition~(\ref{cond-D}) clearly gives  $\ell_N \leq \delta^N <1$ (with $\delta$ in (\ref{cond-D})), thus $L\leq \delta_V(P)$ by definition of $\delta_V(P)$. Conversely, assume that $L<1$: there exists $N\geq1$ such that $\ell_N <1$. Let $\delta$ be such that $\ell_N<\delta^N<1$. Then there exists $n_0\in\N$ such that: $\forall n>n_0,\ (P^NV)(n)/V(n) \leq \delta^N$. Hence 
$$P^NV\leq \delta^N V + d,\quad \text{with}\ d:= \max_{0\leq i \leq n_0} \frac{(P^NV)(i)}{V(i)}.$$
This proves (\ref{cond-D}), and $\delta_V(P) \leq (\ell_N)^{1/N}$ since $\delta$ is arbitrary close to $(\ell_N)^{1/N}$. In fact, the last argument shows that $\delta_V(P) \leq (\ell_N)^{1/N}$ provided that $\ell_N <1$. From definition of $L$, there exists a sequence $(N_k)_{k\geq0}$ such that $L=\lim_k(\ell_{N_k})^{1/N_k}$. Thus we have $\delta_V(P) \leq (\ell_{N_k})^{1/N_k}$ for $k$  large enough. Thus $\delta_V(P) \leq L$. 
\end{proof} 

In the next subsections, Corollary~\ref{cor-qc-bis} is applied to random walks on $\N$ with the following special sequence $ V_\gamma:=(\gamma^n)_{n\in\N}$ for some $\gamma\in(1,+\infty)$. The associated weighted-supremum  space $\cB_\gamma \equiv \cB_{V_\gamma}$ is defined by: 
\begin{equation} \label{def-Bgamma-discret}
\cB_{\gamma} := \big\{(f(n))_{n\in\N}\in\C^{\N} : \sup_{n\in\N}\gamma^{-n}|f(n)| < \infty \big\}.
\end{equation}

\subsection{Quasi-compactness of RW with bounded state-dependent increments} \label{ex-reflecting-nonhom}  
Let us fix $b\in\N^*$, and assume that the kernel $P$ on $\X:=\N$ satisfies the following conditions:   
\begin{gather}
\forall i\in\{0,\ldots,b-1\},\quad \sum_{j\ge 0} P(i,j)=1; \nonumber \\
\forall i\ge b, \forall j\in\N, \quad P(i,j) = 
\begin{cases}
 0 & \text{ if } |i-j|>b \\
 a_{j-i}(i) & \text{ if } |i-j|\le b \end{cases} \label{Def_NHRW}
\end{gather} 
where $(a_{-b}(i),\ldots,a_b(i))\in[0,1]^{2b+1}$ satisfies $\sum_{k=-b}^{b} a_k(i)=1$ for all $i\ge b$. This kind of kernels arises, for instance, from time-discretization of Markovian queueing models (see a basic example in Remark~\ref{rk-mrw-boucle}).

\begin{pro} \label{pro-non-hom}
Assume that for every $k\in\Z$ such that  $|k|\le b$
\begin{subequations}
\begin{equation} \label{ak(n)-lim}
 \lim_n a_k(n)=a_k\in[0,1],
\end{equation}
and that $\gamma\in(1,+\infty)$ is such that 
\begin{gather} 
  \phi(\gamma) := \sum_{k=-b}^{b} a_{k}\, \gamma^{k} <1 \label{non-hom-cont}\\
  \forall i\in\{0,\ldots,b-1\},\quad \sum_{j\ge 0} P(i,j)\gamma^j < \infty. \label{non-hom-moment} 
\end{gather}
\end{subequations}
Then $P$ is power-bounded and quasi-compact on $\cB_{\gamma}$ with 
$$r_{ess}(P) \leq \phi(\gamma).$$
\end{pro}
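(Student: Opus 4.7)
The plan is to reduce the proposition to a direct application of Corollary~\ref{cor-qc-bis} with the weight $V = V_\gamma$, where $V_\gamma(n) = \gamma^n$. Since the state space is discrete, $P$ is automatically strongly Feller and the technical hypotheses behind Corollary~\ref{cor-qc-bis} are satisfied, so the whole argument boils down to computing (or at least bounding) the quantity
$$\ell_1 = \limsup_{n\to+\infty}\frac{(PV_\gamma)(n)}{V_\gamma(n)}$$
and then using $L \leq \ell_1$.

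First I would check the standing hypotheses on $V_\gamma$ required by Section~\ref{sub-gene-qc}: the condition $\lim_nV_\gamma(n)=+\infty$ is immediate since $\gamma>1$, and the boundedness of $PV_\gamma/V_\gamma$ on $\N$ is handled by splitting $\N$ into the finite set $\{0,\ldots,b-1\}$ and its complement. On the complement, (\ref{Def_NHRW}) gives for $n\ge b$
$$\frac{(PV_\gamma)(n)}{V_\gamma(n)} = \gamma^{-n}\sum_{k=-b}^{b}a_k(n)\,\gamma^{n+k} = \sum_{k=-b}^{b}a_k(n)\,\gamma^{k},$$
which is uniformly bounded by $\sum_{k=-b}^b\gamma^k$. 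On the finite part $\{0,\ldots,b-1\}$, assumption (\ref{non-hom-moment}) states precisely that $(PV_\gamma)(i)<\infty$ for each such~$i$, so $\sup_{0\le i<b}(PV_\gamma)(i)/V_\gamma(i)$ is finite as a maximum over finitely many finite numbers.

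Next I would compute $\ell_1$. The displayed formula above together with assumption (\ref{ak(n)-lim}) and dominated/finite-sum convergence (the sum has only $2b+1$ terms, each bounded by $\gamma^{|k|}$) gives
$$\ell_1 = \lim_{n\to+\infty}\sum_{k=-b}^{b}a_k(n)\,\gamma^{k} = \sum_{k=-b}^{b}a_k\,\gamma^k = \phi(\gamma).$$
Assumption (\ref{non-hom-cont}) then yields $\ell_1 = \phi(\gamma) < 1$, hence $L := \inf_{N\ge 1}(\ell_N)^{1/N} \leq \ell_1 < 1$. By Corollary~\ref{cor-qc-bis}, condition (\ref{cond-D}) is fulfilled, $P$ is power-bounded and quasi-compact on $\cB_\gamma$, and $r_{ess}(P) = L \leq \phi(\gamma)$, which is the claim.

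There is essentially no substantive obstacle: the role of (\ref{ak(n)-lim}) is to make the asymptotic ratio converge, (\ref{non-hom-cont}) provides the strict inequality needed to apply the ``$L<1$'' branch of Corollary~\ref{cor-qc-bis}, and (\ref{non-hom-moment}) is only there to ensure that $PV_\gamma$ does not blow up on the finite exceptional set $\{0,\ldots,b-1\}$ where $P$ is not assumed to have bounded increments. The only place one needs mild care is checking that the asymptotic ratio on $\{n\ge b\}$ really does capture $\limsup$ over all of $\N$, which is automatic because the additional finitely many values $(PV_\gamma)(i)/V_\gamma(i)$ for $i<b$ cannot affect a limit superior at infinity.
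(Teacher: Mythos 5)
Your proof is correct and is essentially the paper's own argument: both reduce the claim to Corollary~\ref{cor-qc-bis} via the identity $(PV_\gamma)(n)=\bigl(\sum_{k=-b}^b a_k(n)\gamma^k\bigr)V_\gamma(n)$ for $n\ge b$, then pass to the limit using (\ref{ak(n)-lim}) to bound $\ell_1$ by $\phi(\gamma)$. The only difference is cosmetic: you make explicit the verification that $PV_\gamma/V_\gamma$ is bounded on $\N$ (using (\ref{non-hom-moment}) on the finite set $\{0,\ldots,b-1\}$), a standing hypothesis the paper leaves implicit, and you note $\ell_1=\phi(\gamma)$ with equality rather than just $\leq$, which is immaterial here.
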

\begin{proof}{}
Set $\phi_n(\gamma) := \sum_{k=-b}^{b} a_{k}(n)\, \gamma^{k}$. We have $(PV_\gamma)(n) = \phi_n(\gamma) V_\gamma(n)$ for each $n\geq b$, hence we obtain from (\ref{ak(n)-lim})
$$\limsup_{n}\frac{(PV_\gamma)(n)}{V_\gamma(n)} \leq  \phi(\gamma).$$
The conclusion of Proposition~\ref{pro-non-hom} then follows from Corollary~\ref{cor-qc-bis} using (\ref{non-hom-cont})-(\ref{non-hom-moment}). 
\end{proof}

Assume that $a_0\neq1$. Let $\phi^{(k)}$ be the $k$-th derivative of $\phi$. It is checked in  Appendix~\ref{proof_prop1_NHRW} that there exists $1\leq\ell\leq 2b$ such that 
\begin{equation} \label{def-ell-RRW} 
\forall k\in\{1,\ldots,\ell-1\},\quad \phi^{(k)}(1) = 0
\quad \text{and} \quad \phi^{(\ell)}(1) \neq 0, 
\end{equation}
according that the first condition is removed when $\ell=1$. Since $\phi(1)=1$, 
a sufficient condition for (\ref{non-hom-cont}) to hold for some $\gamma\in(1,+\infty)$ is that 
\begin{equation} \label{CS_non-hom-cont}
\phi^{(\ell)}(1) < 0.	
\end{equation}
\begin{ex}[State-dependent birth-and-death Markov chains] \label{ex-bin-non-hom} 
When $b:=1$ in (\ref{Def_NHRW}), we obtain the standard class of state-dependent birth-and-death Markov chains. Namely,  the stochastic kernel $P$ is defined by:
$$\forall n\ge 1,\ P(n,n-1) := p_n,\quad P(n,n) := r_n,\quad P(n,n+1) := q_n,$$
where the real numbers $(p_n,r_n,q_n)\in[0,1]^2$ and $p_n+r_n+q_n=1$. Assume that the following limits exist:  
$$\lim_n p_n := p \in(0,1], \quad \lim_n r_n := r \in[0,1),\quad \lim_n q_n :=q.$$
If $\gamma\in(1,+\infty)$ is such that 
$$\phi(\gamma) := \frac{p}{\gamma} + r + q\gamma <1 \quad \text{and}\quad \sum_{n\geq0} P(0,n) \gamma^{n} < \infty,$$ 
then it follows from Proposition~\ref{pro-non-hom} that $P$ is power-bounded and quasi-compact on $\cB_{\gamma}$ with 
$$r_{ess}(P) \leq \frac{p}{\gamma} + r + q\gamma.$$ 
The conditions $\gamma >1$ and $p/\gamma + r + q\gamma <1$ are equivalent to the following ones (use $r=1-p-q$ for $(i)$): 
\begin{center}
\begin{tabular}{ll}
  $(i)$ either $p>q>0$ and $1<\gamma < p/q$; & $(ii)$ or $q=0$ and $\gamma>1$. 
\end{tabular}
\end{center}
\begin{itemize}
	\item When $p>q>0$ and $1<\gamma < p/q$: if $\sum_{n\geq0} P(0,n) \gamma^{n} < \infty$, then $P$ is power-bounded and quasi-compact on $\cB_{\gamma}$ with $r_{ess}(P) \leq \phi(\gamma)$. Set $\widehat\gamma := \sqrt{p/q}$. Then 
\begin{equation} \label{delta-het-gamma}
\min_{\gamma>1} \phi(\gamma) = \phi(\widehat\gamma) = r+2\sqrt{pq}\ \in(r^2,1).
\end{equation}
	Consequently, if $\sum_{n\geq0} P(0,n) (\widehat\gamma)^n < \infty$, then the previous conclusions holds for $\gamma:=\widehat\gamma$, with essential spectral radius on $\cB_{\widehat\gamma}$ satisfying 
$$r_{ess}(P) \leq r+2\sqrt{pq}.$$
  \item When $q:=0$ and $\gamma>1$: if $\sum_{n\geq0} P(0,n) \gamma^{n} < \infty$, then $P$ is power-bounded and quasi-compact on $\cB_{\gamma}$ with 
  $$r_{ess}(P) \leq \phi(\gamma)=p/\gamma + r.$$
  Such a case is illustrated by the next example. 

\end{itemize}
\end{ex} 

\begin{ex}[Simulation of a Poisson distribution with parameter one] \label{ex-poisson} 
The Markov kernel $P$ on $\X:=\N$ defined by 
\begin{gather*}
P(0,0)=P(0,1)=\frac{1}{2} \\
\forall n\ge 1,\ P(n,n-1) := \frac{1}{2},\quad P(n,n) := \frac{n}{2(n+1)},\quad P(n,n+1) := \frac{1}{2(n+1)}. \end{gather*} 
arises from a Hastings-Metropolis sampler of a Poisson distribution. 
We have $p=r=1/2$ and $q=0$ with the notations of Example~\ref{ex-bin-non-hom}. Hence, for each $\gamma\in(1,+\infty)$, $P$ is power-bounded and quasi-compact on $\cB_{\gamma}$ and 
$$r_{ess}(P) \leq 1/2 + 1/(2\gamma).$$ 
\end{ex}
\begin{rem}[Random walks with i.d. bounded increments] \label{HRW} 
Consider the case when the increments $a_k(n)$ do not depend on the state n, that is when 
the kernel $P$ is 
\begin{gather*}
\forall i\in\{0,\ldots,b-1\},\quad \sum_{j\ge 0} P(i,j)=1; \quad 
\forall i\ge b, \forall j\in\N, \quad P(i,j) = \left\{ \begin{array}{lcl}
a_{j-i}  & \text{if} & |i-j|\le b  \\
0 & \text{if}& |i-j|> b
\end{array}\right.
\end{gather*} 
where $(a_{-b},\ldots,a_b)\in[0,1]^{2b+1}$ and $\sum_{k=-b}^{b} a_k=1$. Obviously the statements of Example~\ref{ex-bin-non-hom} apply but some additional facts can be deduced for such Markov chains. 
First note that 
\begin{equation} \label{itere-V-gamma}
\forall \gamma\in(1,+\infty),\ \forall N\geq 1,\ \forall n\geq Nb,\quad (P^NV_{\gamma})(n) = \phi(\gamma)^N\, V_{\gamma}(n).
\end{equation}
Consequently, under the assumptions (\ref{non-hom-moment}) and $\phi(\gamma)<1$ where $\phi(\cdot)$ is given by (\ref{non-hom-cont}), we obtain from Corollary~\ref{cor-qc-bis} that Condition~\emph{(\ref{cond-D})} is fulfilled with $V_{\gamma}$ and 
\begin{equation} \label{delta-RWW}
r_{ess}(P)=\delta_{V_{\gamma}}(P) = \phi(\gamma).
\end{equation}
Moreover, it is shown in Appendix~\ref{app-equi-WD} that, under the assumptions $a_0\neq 1$ and (\ref{non-hom-moment}), Condition~\emph{(\ref{cond-D})} holds true with $V_{\gamma_0}$ for some $\gamma_0\in(1,\gamma]$ if and only if $\phi^{(\ell)}(0)<1$ (see (\ref{def-ell-RRW})-(\ref{CS_non-hom-cont})). Finally, for the  birth-and-death Markov chains, that is when $b:=1$, the convergence rate can be computed (see Subsection~\ref{sub-basic-ex-revis}). 
\end{rem}
\subsection{$V$-geometrical ergodicity for discrete Markov chains} \label{sub-cond-I-A}

Let $P=(P(i,j))_{i,j\in\N^2}$ be a Markov kernel on $\N$. The following irreducibility and aperiodicity conditions for discrete Markov chains are well-known. For any $(i,j)\in\N^2$, define
$$\cR_{i,j} := \big\{n\geq1 : P^{n}(i,j)>0\big\}.$$
 The Markov kernel $P$ is said to be irreducible if  
\begin{equation} \label{cond-ireduc}
\forall (i,j)\in\N,\quad \cR_{i,j}\neq\emptyset, \tag{\textbf{${\cal I}$}}
\end{equation} 
and to be aperiodic if 
\begin{equation} \label{cond-aperiod}
\exists\, i\in\N,\quad \cR_{i,i} - \cR_{i,i} := \{n-m,\, (m,n)\in\cR_{i,i}\times\cR_{i,i}\} = \Z. \tag{\textbf{${\cal A}$}}
\end{equation}
Since $\cR_{i,i}$ is stable under addition from the Chapman-Kolmogorov equation, the subgroup of $\Z$ generated by $\cR_{i,i}$ coincides with $\cR_{i,i} - \cR_{i,i}$. Hence the aperiodicity Condition~(\ref{cond-aperiod}) is equivalent to the usual one: the largest element $d=d(i)\in\N^*$ such that $\cR_{i,i}\subset d\cdot\N^*$ (i.e. the g.c.d. of $\cR_{i,i}$), called the period of $i$, is equal to $1$. If $P$ is irreducible then each state $j\in\N$ has the same period. 
\begin{cor} \label{cor-geo-I-A} 
Under Condition~\emph{(\ref{cond-D})}, $P$ has an invariant probability measure $\pi$ such that $\pi(V)<\infty$. If the additional Conditions~\emph{(\ref{cond-ireduc})}-\emph{(\ref{cond-aperiod})} hold true, then $P$ is $V$-geometrically ergodic. 
\end{cor}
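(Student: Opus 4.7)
The strategy is to chain together three results already established in the paper: Lemma~\ref{lem-exist-proba-inv} for the invariant measure, Corollary~\ref{cor-qc-bis} for quasi-compactness, and Theorem~\ref{CNS-qc-Vgeo} for $V$-geometric ergodicity. The only genuine work lies in verifying the two spectral hypotheses of Theorem~\ref{CNS-qc-Vgeo}, namely that $\lambda=1$ is simple and is the unique peripheral eigenvalue, and here (\ref{cond-ireduc})-(\ref{cond-aperiod}) enter in an essential way.

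First I would produce the invariant probability. Endow $\N$ with the discrete metric, so that $\N$ is a separable complete metric space and any map is continuous; the standing assumption $\lim_n V(n) = +\infty$ makes each sublevel set $\{V\leq\alpha\}$ finite and hence compact. Lemma~\ref{lem-exist-proba-inv} then directly yields a $P$-invariant probability $\pi$ with $\pi(V)<\infty$. Next, Corollary~\ref{cor-qc-bis} (applicable under (\ref{cond-D}) with $\lim_n V(n)=+\infty$) shows that $P$ is power-bounded and quasi-compact on $\cB_V$.

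The main step is to check that (a) $\ker(P-I)=\C\cdot 1_\N$ on $\cB_V$ and (b) $\lambda=1$ is the only eigenvalue of $P$ of modulus one. Let $\lambda\in\C$ with $|\lambda|=1$ and $f\in\cB_V$ with $Pf=\lambda f$. Theorem~\ref{pro-tail-fct-propre} (with $p=1$) gives $|f|\le c\,V^{\ln|\lambda|/\ln\delta}=c$, so $f$ is bounded. Since $P(i,\cdot)$ is a probability measure, Cauchy-Schwarz yields
\[
|f(i)|^2=|\lambda f(i)|^2=\bigl|(Pf)(i)\bigr|^2\leq (P|f|^2)(i),
\]
so the bounded function $|f|^2$ is $P$-subharmonic. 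Integrating against $\pi$ and using $\pi P=\pi$ forces $|f|^2=P|f|^2$ on $\{\pi>0\}$; by (\ref{cond-ireduc}), $\pi$ charges every state, so the equality holds pointwise on $\N$. The equality case in Cauchy-Schwarz then forces $|f|$ to be constant on the support of each $P(i,\cdot)$, and the equality case in the triangle inequality $|\sum_j P(i,j)f(j)|=\sum_j P(i,j)|f(j)|$ forces $f(j)=\lambda f(i)$ whenever $P(i,j)>0$. Iterating, $f(j)=\lambda^n f(i)$ whenever $P^n(i,j)>0$. By (\ref{cond-aperiod}), pick $i\in\N$ with $\cR_{i,i}-\cR_{i,i}=\Z$; since $\cR_{i,i}$ is closed under addition, there exists $n_0$ with $n\in\cR_{i,i}$ for all $n\ge n_0$, hence $f(i)=\lambda^n f(i)$ for all large $n$, giving $\lambda=1$ (if $f\ne 0$) and then by irreducibility $f\equiv f(i)$. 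This proves (a) and (b) simultaneously.

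Finally, armed with quasi-compactness, power-boundedness, and the two spectral conditions, Theorem~\ref{CNS-qc-Vgeo} delivers the $V$-geometric ergodicity of $P$. The delicate point is the peripheral-spectrum argument in the third step: on the non-compact space $\N$ the supremum of $|f|$ need not be attained, so one cannot invoke a maximum-modulus principle directly; instead one exploits the invariant measure $\pi$ (whose existence was just secured) together with Cauchy-Schwarz to propagate $|f|^2\le P|f|^2$ into a pointwise equality, after which irreducibility and aperiodicity pin down $\lambda=1$ and $f$ constant.
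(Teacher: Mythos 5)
Your overall skeleton is exactly the paper's: secure $\pi$ via Lemma~\ref{lem-exist-proba-inv} (with the discrete metric and $\lim_n V(n)=+\infty$ making sublevel sets compact), get power-boundedness and quasi-compactness from Corollary~\ref{cor-qc-bis}, and then feed the two spectral hypotheses into Theorem~\ref{CNS-qc-Vgeo}. Where you differ from the paper's Lemma~\ref{IA_spectre} is in the technical execution of the peripheral-spectrum step. You work with $|f|^2$ and Cauchy--Schwarz, which forces you to first invoke Theorem~\ref{pro-tail-fct-propre} to know $f$ is bounded (otherwise $\pi(|f|^2)$ need not be finite). The paper avoids this detour entirely: since $f\in\cB_V$ gives $|f|\le\|f\|_V V$ and $\pi(V)<\infty$, one can integrate the subharmonicity inequality $|f|\le P|f|$ (from the triangle inequality and positivity of $P$) directly against $\pi$, with no appeal to Theorem~\ref{pro-tail-fct-propre} at all. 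Both routes then pin down $\lambda=1$ and $f$ constant via equality cases and the numerical-semigroup/Frobenius consequence of aperiodicity, which is fine. Two small points worth tightening: (i) the claim that $\pi$ charges every state is correct but needs the one-line argument ($\pi(j)=0$ would propagate backwards along irreducibility to force $\pi\equiv 0$, a contradiction) — the paper spells this out; (ii) your sentence separating a ``Cauchy--Schwarz gives $|f|$ constant'' step from a ``triangle-inequality aligns phases'' step is slightly redundant, since equality in the Cauchy--Schwarz you wrote already forces $f$ (not just $|f|$) to be constant on $\operatorname{supp}P(i,\cdot)$, giving $f(j)=\lambda f(i)$ directly. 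Neither of these is a gap; the proof is correct but a bit heavier than it needs to be.
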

The previous statement is well-known. It can be derived from quasi-compactness (note that the first assertion follows from Lemma~\ref{lem-exist-proba-inv}): apply Corollary~\ref{cor-qc-bis} and Theorem~\ref{CNS-qc-Vgeo} with Conditions~\emph{(\ref{cond-ireduc})}-\emph{(\ref{cond-aperiod})} (see Appendix~\ref{app-compl-cor3} for completeness).  

Under the additional Conditions~\emph{(\ref{cond-ireduc})}-\emph{(\ref{cond-aperiod})}, all the statements of Subsection~\ref{ex-reflecting-nonhom} can be completed in order to find again the $V_\gamma$-geometric ergodicity. For instance, in Example~\ref{ex-bin-non-hom}, the $V_\gamma$-geometrical  ergodicity holds when  $P(0,0)\in(0,1)$ and $p_n,q_n>0$ for all $n\geq1$.\footnote{Note that, if $P(0,0)=0$, then the period $d(0)$ of $i:=0$ may be equal to 2. For instance this fact holds when $P(0,1)=1$, and $\lambda=-1$ is then an eigenvalue of $P$: $P$ is quasi-compact on $\cB_{\widehat\gamma}$, but it is not $V_{\widehat\gamma}$-geometrically ergodic.} 
In Example~\ref{ex-poisson}, Conditions~\emph{(\ref{cond-ireduc})}-\emph{(\ref{cond-aperiod})} are automatically fulfilled so that $P$ is $V_\gamma$-geometrically ergodic without additional assumptions.

Note that the irreducibility condition is not necessary for $P$ to be $V$-geometrically ergodic: in this case the use of Theorem~\ref{CNS-qc-Vgeo} (via Corollary~\ref{cor-qc-bis}) is of interest to obtain the $V$-geometric ergodicity as illustrated in the following simple example. 
\begin{ex} [An instance of binary RW] \label{ex-birth-and-death} 
Assume that 
$$\forall n\ge 1,\ P(n,n-1) := p\in(0,1],\quad P(n,n) := r =1-p.$$
Under the assumptions  $\gamma>1$ and $\sum_{n\geq0} P(0,n) \gamma^{n} < \infty$, we know from (\ref{delta-RWW}) that $r_{ess}(P) = p/\gamma + r$. Note that Condition~\emph{(\ref{cond-ireduc})} is not automatically fulfilled in this instance. Anyway, without additional assumptions, $P$ is $V_\gamma$-geometrically ergodic. Indeed Theorem~\ref{CNS-qc-Vgeo} applies. First the equation $Pf=f$ leads to: $\forall n\geq1, f(n)=f(n-1)$, so that $f$ is constant. Hence $1$ is a simple eigenvalue of $P$. Second, given $\lambda\in\C$, $|\lambda|=1$, $\lambda\neq1$, any solution of $Pf=\lambda f$ is of the form: $f=(f(0){z_\lambda}^n)_{n\in\N}$ with $z_\lambda:=p/(\lambda-r)$. From $|\lambda-r| > 1-r=p$, we obtain $|z_\lambda| <1$, so that the equality $(Pf)(0)=\lambda f(0)$, namely $\lambda f(0) = f(0) \sum_{n\geq0}P(0,n){z_\lambda}^n$ is only possible when $f(0)=0$. Hence $1$ is the only eigenvalue of modulus one. 
\end{ex}

Finally recall that, as it was outlined at the end of Section~\ref{sec-mino}, quasi-compactness is especially of interest for bounding the convergence rate of $P$. Example~\ref{ex-speksma-1} below is a first simple illustration of this fact. Other applications to birth-and-death Markov chains are proposed in the next Subsection. 
\begin{ex}[An instance of RW with unbounded increments] \label{ex-speksma-1} 
Let us point out that Corollary~\ref{cor-qc-bis} and Theorem~\ref{CNS-qc-Vgeo} may be also useful for random walks on $X:=\N$ with unbounded increments. For instance, let $P$ be defined by \cite{MalSpi95}
$$\forall n\geq 1,\ P(0,n) := q_n,\quad \forall n\geq 1,\ P(n,0) := p,\ P(n,n+1) := q = 1-p,$$
with $p\in(0,1)$ and $q_n\in[0,1]$ such that $\sum_{n\geq1}q_n=1$. For $\gamma\in(1,+\infty)$ and $V_\gamma:=(\gamma^n)_{n\in\N}$, we have: $\forall n\geq1,\ (PV_\gamma)(n) = p +q\gamma^{n+1} = (p/\gamma^n + q\gamma)V_\gamma(n)$. Thus, if $\gamma\in(1,1/q)$ and $\sum_{n\geq1}q_n\gamma^n < \infty$, then Condition~\emph{(\ref{cond-D})} holds with $V_\gamma$ and we have $\delta_{V_\gamma}(P) \leq q\gamma$. Therefore, under the previous conditions, if follows from Corollary~\ref{cor-qc-bis} that $P$ is power-bounded, quasi-compact on $\cB_{\gamma}$  and  
$$r_{ess}(P) \leq q\gamma.$$ 

No additional assumptions are required to obtain the $V_\gamma$-geometric ergodicity: $P$ is $V_\gamma$-geometrically ergodic provided that $\gamma\in(1,1/q)$ and $\sum_{n\geq1}q_n\gamma^n < \infty$. Moreover the convergence rate $\rho_{V_{\gamma}}(P)$ of $P$ on $\cB_\gamma$ satisfies:  
\begin{equation} \label{spect-gap-speksma-1} 
\rho_{V_\gamma}(P) \leq \max(q\gamma,p).
\end{equation}
Proof of (\ref{spect-gap-speksma-1}). \ 
Theorem~\ref{CNS-qc-Vgeo} is applied with any $r_0>\max(q\gamma,p)$. Let $\lambda\in\C$ be such that $\max(q\gamma,p) < |\lambda| \leq1$, and let $f\in\cB_\gamma$, $f\neq0$, be such that $Pf=\lambda f$. We obtain $f(n) = (\lambda/q)f(n-1) - pf(0)/q$ for any $n\geq 2$, so that
\begin{eqnarray*}
\forall n\geq2,\quad f(n) &=& f(1)\, \left(\frac{\lambda}{q}\right)^{n-1}  - \frac{pf(0)}{q}\, \bigg(\frac{1-(\frac{\lambda}{q})^{n-1}}{1-\frac{\lambda}{q}}\bigg) \\
&=& \left(\frac{\lambda}{q}\right)^{n-1}\left(f(1)- \frac{pf(0)}{\lambda-q}\right) + \frac{pf(0)}{\lambda-q}.
\end{eqnarray*}
Since $f\in\cB_\gamma$  and $|\lambda|/q > \gamma$, we obtain $f(1)= pf(0)/(\lambda-q)$, and consequently: $\forall n\geq1,\ f(n)= pf(0)/(\lambda-q)$. Next the equality $\lambda f(0) = (Pf)(0) = \sum_{n\geq1} q_nf(n)$ gives: $\lambda f(0) = pf(0)/(\lambda-q)$ since $\sum_{n\geq1}q_n=1$. We have $f(0)\neq0$ since we look for a solution $f\neq0$. Thus $\lambda$ satisfies $\lambda^2 - q\lambda - p=0$, namely: $\lambda = 1$ or $\lambda =-p$. The case $\lambda =-p$ has not to be considered from assumption. If $\lambda = 1$, we have $f(n)=f(0)$ for each $n\in\N$, so that $1$ is a simple eigenvalue.  We have proved that $1$ is a simple eigenvalue of $P$ on $\cB_\gamma$ and that $\lambda=1$ is the only eigenvalue of $P$ on $\cB_\gamma$ such that $\max(q\gamma,p) < |\lambda| \leq1$. Then Theorem~\ref{CNS-qc-Vgeo} gives the estimate (\ref{spect-gap-speksma-1}) of the convergence rate. Note that $p$ cannot be dropped in (\ref{spect-gap-speksma-1}) since $\lambda=-p$ is an eigenvalue of $P$ on $\cB_\gamma$ with corresponding eigenvector (up to a multiplicative constant) $f_p:=(1,-p,-p,\dots)$. 
\end{ex}
%
%=========================================
\subsection{Study of the convergence rate for the birth-and-death Markov chains} \label{sub-basic-ex-revis} 
%=========================================
%
We consider real numbers $p,q,r\in[0,1]$ such that $p+r+q=1$, $p>q>0$, and we assume that $P$ is defined on $\X:=\N$ by 
\begin{equation} \label{hyp-ref-rand}
\begin{array}{c}
 \forall n\ge 1,\ P(n,n-1) :=p,\quad P(n,n) := r\in[0,1),\quad P(n,n+1) := q, \\[1mm]
 \displaystyle P(0,0)\in(0,1),\quad \sum_{n\geq0} P(0,n) \,({\widehat\gamma})^{n} < \infty \text{ where  }\widehat\gamma:= \sqrt{\frac{p}{q}}\in(1,+\infty).
 \end{array}
\end{equation} 
Let $V_{\widehat\gamma}:=(\widehat\gamma^n)_{n\in\N}$. The weighted-supremum space $\cB_{\widehat\gamma}:=\cB_{V_{\widehat\gamma}}$ associated to $V_{\widehat\gamma}$ is defined in (\ref{def-Bgamma-discret}). Note that Conditions~\emph{(\ref{cond-ireduc})}-\emph{(\ref{cond-aperiod})} hold true. We know from  Corollary~\ref{cor-geo-I-A} that $P$ is $V_{\widehat\gamma}$-geometrically ergodic and from (\ref{delta-het-gamma}) (\ref{delta-RWW}) that 
$$r_{ess}(P) = \delta_{V_{\widehat\gamma}}(P) = r+2\sqrt{pq}.$$

As illustrated afterwards, thanks to Theorem~\ref{CNS-qc-Vgeo} and Lemma~\ref{lem-ref-rand-vp} below, the last estimate on the essential spectral radius $r_{ess}(P)$ is relevant to compute the convergence rate $\rho_{V_{\widehat\gamma}}(P)$.   
\begin{lem} \label{lem-ref-rand-vp}
Assume that Conditions~(\ref{hyp-ref-rand}) hold true. If $f$ is a nontrivial eigenvector in $\cB_{\widehat\gamma}$ associated with a complex eigenvalue $\lambda$ of $P$ such that $r+2\sqrt{pq} < |\lambda| \leq 1$ then 
\begin{equation} \label{f-z1}
\exists \alpha_1\in\C\setminus\{0\},\ \forall n\geq 0,\ f(n) = \alpha_1\, {z_\lambda}^n,
\end{equation}
with $z_\lambda$ satisfying the following conditions: 
\begin{subequations}
\begin{gather} 
 |z_\lambda| <  \widehat\gamma, \label{mod-z1} \\
 q{z_\lambda}^2 + (r - \lambda) z_\lambda + p = 0, \label{ex1-cond1-bax} \\
 \sum_{n\geq0} P(0,n){z_\lambda}^n = \lambda. \label{ex1-cond0-bax-gene} 
\end{gather}
\end{subequations}
\end{lem}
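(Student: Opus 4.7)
The plan is to exploit the tridiagonal structure of $P$ coming from (\ref{hyp-ref-rand}), which turns the eigenvalue equation $Pf=\lambda f$ restricted to $n\geq 1$ into a two-step linear recurrence with constant coefficients, and then to handle the boundary state $n=0$ separately. Writing out $(Pf)(n)=\lambda f(n)$ for $n\geq 1$ gives
$$p\, f(n-1) + r\, f(n) + q\, f(n+1) = \lambda\, f(n),$$
or equivalently $q\, f(n+1) + (r-\lambda)\, f(n) + p\, f(n-1) = 0$. The characteristic polynomial of this recurrence is precisely the quadratic appearing in (\ref{ex1-cond1-bax}).

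Denote by $z_1,z_2$ its two roots. By Vieta's formulas, $z_1 z_2 = p/q = \widehat\gamma^{\,2}$, so $|z_1|\,|z_2|=\widehat\gamma^{\,2}$. The crucial geometric step is to show that, under the assumption $|\lambda|>r+2\sqrt{pq}$, the roots cannot both lie on the circle of radius $\widehat\gamma$. Indeed, if $|z_1|=|z_2|=\widehat\gamma$, then writing $z_1=\widehat\gamma\, e^{i\theta}$ and using that $z_1 z_2 = \widehat\gamma^{\,2}$ is a positive real number forces $z_2=\widehat\gamma\, e^{-i\theta}$; consequently $\lambda - r = q(z_1+z_2) = 2q\widehat\gamma\cos\theta = 2\sqrt{pq}\cos\theta$, whence $|\lambda|\leq r + 2\sqrt{pq}$, contradicting the hypothesis. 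Combined with $|z_1|\,|z_2|=\widehat\gamma^{\,2}$, this forces (after relabelling) $|z_1|<\widehat\gamma<|z_2|$; in particular the roots are distinct.

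Since the roots are simple, the general solution of the recurrence reads $f(n)=\alpha_1 z_1^{\, n} + \alpha_2 z_2^{\, n}$ for $n\geq 0$. The assumption $f\in\cB_{\widehat\gamma}$ means $|f(n)|=O(\widehat\gamma^{\, n})$, and because $|z_2|>\widehat\gamma$ this forces $\alpha_2=0$. Setting $z_\lambda:=z_1$ yields (\ref{f-z1}) together with (\ref{mod-z1}), and $\alpha_1\neq 0$ because $f$ is nontrivial.

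To obtain the last identity (\ref{ex1-cond0-bax-gene}), I would simply use the eigenvalue equation at the boundary state $0$: $(Pf)(0)=\lambda f(0)$ reads $\sum_{n\geq 0}P(0,n)f(n)=\lambda f(0)$. This series converges absolutely thanks to $|z_\lambda|<\widehat\gamma$ combined with the moment assumption $\sum_{n\geq 0}P(0,n)\widehat\gamma^{\, n}<\infty$ in (\ref{hyp-ref-rand}); substituting $f(n)=\alpha_1 z_\lambda^{\, n}$, noting $f(0)=\alpha_1\neq 0$, and dividing by $\alpha_1$ produces exactly (\ref{ex1-cond0-bax-gene}). The main obstacle is the geometric step ruling out $|z_1|=|z_2|=\widehat\gamma$; once this is in place, the rest is a routine application of linear-recurrence theory together with the integrability enforced by membership in $\cB_{\widehat\gamma}$.
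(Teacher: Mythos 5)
Your proof is correct. The overall skeleton matches the paper's: set up the two-term linear recurrence for $n\geq1$, observe $z_\lambda z_\lambda' = p/q = \widehat\gamma^2$, rule out the degenerate case $|z_\lambda|=|z_\lambda'|=\widehat\gamma$, conclude $|z_\lambda|<\widehat\gamma<|z_\lambda'|$, kill the divergent branch by membership in $\cB_{\widehat\gamma}$, and read off (\ref{ex1-cond0-bax-gene}) from $(Pf)(0)=\lambda f(0)$. The one place you diverge substantively from the paper is the crucial step ruling out $|z_\lambda|=|z_\lambda'|$: the paper invokes Theorem~\ref{pro-tail-fct-propre} (with $p:=1$, $\delta:=r+2\sqrt{pq}$) to get the bound $|f|\le c\,V_{\widehat\gamma}^{\,\tau}$ with $\tau=\ln|\lambda|/\ln\delta\in(0,1)$, and then argues that a solution $\alpha_1 z_\lambda^n+\alpha_2 z_\lambda'^n$ (or $\alpha_1 z_\lambda^n+\alpha_2 n z_\lambda^n$) with $|z_\lambda|=|z_\lambda'|=\widehat\gamma$ cannot satisfy that subgeometric bound unless $\alpha_1=\alpha_2=0$. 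You instead argue purely algebraically via Vieta's formulas: if both roots lie on the circle of radius $\widehat\gamma$ then $z_\lambda'=\overline{z_\lambda}$ (since $z_\lambda z_\lambda'=\widehat\gamma^2>0$), forcing $\lambda=r+2\sqrt{pq}\cos\theta\in\R$ and hence $|\lambda|\le r+2\sqrt{pq}$, a contradiction. Your route is more elementary and self-contained, and as a bonus it immediately gives that the two roots are distinct (so the double-root case never arises, whereas the paper has to carry it along). The paper's route, while heavier, demonstrates how the general-purpose Theorem~\ref{pro-tail-fct-propre} is applied in practice, which is part of the pedagogical point of that section. Either argument is valid; yours is arguably cleaner for this particular example but does not generalize beyond the constant-coefficient birth-and-death setting the way Theorem~\ref{pro-tail-fct-propre} does.
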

\begin{proof}{}
Let $\lambda\in\C$ be such that $r+2\sqrt{pq} < |\lambda| \leq 1$. Let $f\in\cB_{\widehat\gamma}$, $f\ne 0$ such that $Pf=\lambda f$ so that 
\begin{equation} \label{ex1-cond1-bax-bis}
\forall n\geq 1,\quad \lambda f(n) = pf(n-1) + rf(n) + qf(n+1). 
\end{equation}
Let us denote by $z_\lambda,z_\lambda'$ the two complex solutions  of the characteristic equation 
$$qz^2 + (r - \lambda) z + p = 0.$$ 
Observe that $z_\lambda z_\lambda'=p/q= \widehat\gamma^2$. Recall that the solutions of (\ref{ex1-cond1-bax-bis}) are of the form, either $f(n) = \alpha_1 {z_\lambda}^n + \alpha_2 {z_\lambda'}^n$ if $z_\lambda\neq z_\lambda'$, or $f(n) = \alpha_1 {z_\lambda}^n + \alpha_2 n {z_\lambda}^n$ if $z_\lambda=z_\lambda'$, with $\alpha_1,\alpha_2\in\C$. 

We have $|z_\lambda|\neq |z_\lambda'|$. Indeed, Theorem~\ref{pro-tail-fct-propre} applied with $p:=1$ and $\delta := r+2\sqrt{pq}$ implies  that $|f|\leq c {V_{\widehat\gamma}}^\tau$ with $\tau := \ln|\lambda|/\ln\delta\in(0,1)$ and some   constant $c$. Consequently we have $|\alpha_1 {z_\lambda}^n + \alpha_2 {z_\lambda'}^n| \leq c\, {\widehat\gamma}^{\tau n}$ in case $z_\lambda\neq z_\lambda'$, and $|\alpha_1 {z_\lambda}^n + \alpha_2 n{z_\lambda}^n| \leq c\,{{\widehat\gamma}}^{\tau n}$ in case  $z_\lambda=z_\lambda'$. If $|z_\lambda| = |z_\lambda'|$, then we would have $|z_\lambda| = |z_\lambda'| = \widehat\gamma$, but the two previous inequalities then easily imply that $\alpha_1=\alpha_2=0$, that is $f=0$.

From $|z_\lambda| \neq |z_\lambda'|$, we can suppose that (for instance) $|z_\lambda| <  \widehat\gamma$ and $|z_\lambda'| >  \widehat\gamma$. Since $f$, $({z_\lambda}^n)_{n\in\N}$ are in $\cB_{\widehat\gamma}$ and $({z_\lambda'}^n)_n$ is not in $\cB_{\widehat\gamma}$, we obtain: $\forall n\geq 0,\ f(n) = \alpha_1 {z_\lambda}^n$. Since $f\neq0$ (i.e.~$\alpha_1\neq0$), the equation $(Pf)(0) = \lambda f(0)$ implies that $z_\lambda$ must satisfy (\ref{ex1-cond0-bax-gene}). 
\end{proof}
\begin{pro} \label{BD} 
In addition to Conditions~(\ref{hyp-ref-rand}), the boundary transition probabilities are assumed to satisfy, for some $a\in(0,1)$: 
$$P(0,0):=a,\ \ P(0,1):=1-a.$$
Then $P$ is $V_{\widehat\gamma}$-geometrically ergodic. Furthermore, defining  $a_0 := 1-q-\sqrt{pq}$, the convergence rate $\rho_{V_{\widehat\gamma}}(P)$ of $P$ is given by:    
\begin{itemize}
	\item when $a\in[a_0,1)$: 
  \begin{equation} \label{a0-2}
	\rho_{V_{\widehat\gamma}}(P) = r+2\sqrt{pq}\, ; 
  \end{equation} 
	\item when $a\in(0,a_0]$: 
\begin{enumerate}[(a)]
	\item in case $\, 2p \leq \big(1-q+\sqrt{pq}\big)^2$: 
\begin{equation}\label{a0-0}
\rho_{V_{\widehat\gamma}}(P) = r+2\sqrt{pq}\, ; 
\end{equation} 
  \item in case $\, 2p > \big(1-q+\sqrt{pq}\big)^2$, setting $a_1 := p - \sqrt{pq}  - \sqrt{r\big(r+2\sqrt{pq}\big)}$:
\begin{subequations}
\begin{eqnarray}
& &  \rho_{V_{\widehat\gamma}}(P) = \left|a + \frac{p(1-a)}{a-1+q}\right| \ \ \text{ when } a\in(0,a_1] \label{a0-1} \\
& & \rho_{V_{\widehat\gamma}}(P) =  r+2\sqrt{pq}  \ \, \quad\qquad\text{ when } a\in[a_1,a_0). \label{a0-3}
\end{eqnarray}
\end{subequations}
\end{enumerate}
\end{itemize}
\end{pro}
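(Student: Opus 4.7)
The plan is as follows. Conditions (\ref{hyp-ref-rand}) together with Corollary~\ref{cor-geo-I-A} give $V_{\widehat\gamma}$-geometric ergodicity and $r_{ess}(P)=r+2\sqrt{pq}$. By Theorem~\ref{CNS-qc-Vgeo}, the convergence rate $\rho_{V_{\widehat\gamma}}(P)$ either equals $r_{ess}(P)$ or is the maximum modulus of the eigenvalues $\lambda$ satisfying $r+2\sqrt{pq}<|\lambda|<1$. So the task reduces to enumerating these eigenvalues via Lemma~\ref{lem-ref-rand-vp}, applied with the specific boundary probabilities $P(0,0)=a$, $P(0,1)=1-a$.

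Second, I make Lemma~\ref{lem-ref-rand-vp} explicit. Condition (\ref{ex1-cond0-bax-gene}) reads $\lambda=a+(1-a)z_\lambda$, and substituting this into (\ref{ex1-cond1-bax}) yields the quadratic $(1-a-q)z^2+(a-r)z-p=0$, which factors as $(z-1)\bigl((1-a-q)z+p\bigr)=0$. The root $z=1$ produces $\lambda=1$ (already accounted for), while the other root is
\begin{equation*}
z_*=\frac{p}{a-1+q},\qquad \lambda_*=a+\frac{p(1-a)}{a-1+q}.
\end{equation*}
Admissibility demands $|z_*|<\widehat\gamma=\sqrt{p/q}$, which rewrites as $|1-a-q|>\sqrt{pq}$; since $p>q$ forces $1-q+\sqrt{pq}>1$, this is equivalent to $a<a_0:=1-q-\sqrt{pq}$. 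For $a\ge a_0$ no such eigenvalue exists, proving (\ref{a0-2}). (At $a=a_0$ the two roots coincide at $-\widehat\gamma$ and Theorem~\ref{pro-tail-fct-propre} forces the eigenvector to vanish, so this borderline case yields no eigenvalue either.)

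Third, for $a<a_0$ I parametrize by $s:=1-a-q>\sqrt{pq}$, which gives the clean expression $\lambda_*=r-(s+pq/s)$. Since $s+pq/s\ge 2\sqrt{pq}$ for $s>0$, we always have $\lambda_*\le r-2\sqrt{pq}$, hence $|\lambda_*|>r+2\sqrt{pq}$ amounts to $\lambda_*<-(r+2\sqrt{pq})$, i.e.\ $s+pq/s>2r+2\sqrt{pq}$. The latter is equivalent to $s>s_+$, where $s_+=(r+\sqrt{pq})+\sqrt{r(r+2\sqrt{pq})}$ is the larger root of $s^2-2(r+\sqrt{pq})s+pq=0$, i.e.\ to $a<1-q-s_+=p-\sqrt{pq}-\sqrt{r(r+2\sqrt{pq})}=a_1$. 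One checks at $s=s_+$ that $\lambda_*=-(r+2\sqrt{pq})$, so $|\lambda_*|=r+2\sqrt{pq}$ at $a=a_1$, matching the boundary of the intervals. A direct squaring (legitimate because $p-\sqrt{pq}>0$) shows
\begin{equation*}
a_1>0\ \Longleftrightarrow\ (p-\sqrt{pq})^2>r^2+2r\sqrt{pq}\ \Longleftrightarrow\ p^2+pq-r^2>2(1-q)\sqrt{pq},
\end{equation*}
and expanding $(1-q+\sqrt{pq})^2=(1-q)^2+2(1-q)\sqrt{pq}+pq$ together with $p+r=1-q$ and $p+q+r=1$ reduces this precisely to $2p>(1-q+\sqrt{pq})^2$. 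When this fails, no $a\in(0,a_0]$ produces an eigenvalue dominating $r+2\sqrt{pq}$, giving (\ref{a0-0}); when it holds, (\ref{a0-1}) applies on $(0,a_1]$ and (\ref{a0-3}) on $[a_1,a_0)$.

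The main obstacle is the final algebraic equivalence between $a_1>0$ and $2p>(1-q+\sqrt{pq})^2$, which must be carried out carefully using $p+q+r=1$; the preceding steps are essentially routine once Lemma~\ref{lem-ref-rand-vp} and Theorem~\ref{CNS-qc-Vgeo} have been set up, but juggling the signs (so that the quadratic factors cleanly, that $|\lambda_*|>r+2\sqrt{pq}$ can only come from $\lambda_*$ being very negative, and that the boundary $a=a_0$ produces no genuine eigenvalue) requires attention.
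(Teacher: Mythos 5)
Your proof is correct and follows essentially the same route as the paper: reduce to Theorem~\ref{CNS-qc-Vgeo} and Lemma~\ref{lem-ref-rand-vp}, solve for $\lambda(a)$ and $z(a)$ from the boundary condition, determine for which $a$ the pair $(\lambda(a),z(a))$ yields an admissible eigenvector dominating $r_{ess}(P)=r+2\sqrt{pq}$, and translate the threshold into the condition on $2p$ versus $(1-q+\sqrt{pq})^2$.

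The differences from the paper are algebraic rather than structural. Where the paper eliminates $z$ by a $3\times3$ resultant determinant to obtain $(1-\lambda)\big[(\lambda-a)(1-a-q)+p(1-a)\big]=0$, you substitute $\lambda=a+(1-a)z$ into the recurrence quadratic and factor $(z-1)\big((1-a-q)z+p\big)=0$, reaching the same $\lambda_*$ and $z_*$. Where the paper analyzes $\lambda(a)$ via its monotonicity on $(-\infty,a_0]$ and the sign of $\lambda(0)-\lambda(a_1)$, you use the substitution $s=1-a-q$ to write $\lambda_*=r-(s+pq/s)$ and invoke AM--GM; and where the paper computes $\lambda(0)-\lambda(a_1)=\big((1-q+\sqrt{pq})^2-2p\big)/(1-q)$, you verify $a_1>0$ directly by squaring $p-\sqrt{pq}>\sqrt{r(r+2\sqrt{pq})}$ and using $p+q+r=1$. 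These are equivalent; your parametrization by $s$ is arguably a bit cleaner for the case analysis, at the cost of hiding the monotonicity structure that makes the paper's inequalities one-liners. Two small remarks: the parenthetical appeal to Theorem~\ref{pro-tail-fct-propre} at $a=a_0$ is unnecessary since $|\lambda(a_0)|=|r-2\sqrt{pq}|<r+2\sqrt{pq}$ automatically, so that point cannot contribute a dominating eigenvalue in any case; and the degenerate value $a=1-q$ (where $z_*$ is undefined) is worth a one-line mention, although it lies in $(a_0,1)$ and your admissibility criterion already covers it.
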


When $r:=0$ in the previous proposition, we have   $a_0 = a_1 = p-\sqrt{pq} = (p-q)/(1+\sqrt{q/p})$, and it can be easily checked that $2p > (1-q+\sqrt{pq})^2$. The properties (\ref{a0-2}) (\ref{a0-1}) (\ref{a0-3}) then rewrite as: 
\begin{subequations}
\begin{eqnarray}
& &  \rho_{V_{\widehat\gamma}}(P) = \frac{pq+(a-p)^2}{|a-p|} \text{ when } a\in(0,a_0]\label{a0-1-r0} \\
& &  \rho_{V_{\widehat\gamma}}(P) = 2\sqrt{pq} \ \, \quad\qquad\text{ when } a\in(a_0,1). \label{a0-2-r0}
\end{eqnarray}
\end{subequations}
Using Kendall's theorem, the properties (\ref{a0-1-r0})-(\ref{a0-2-r0}) have been proved for $a< p$ in \cite{RobTwe99} and \cite[Ex.~8.4]{Bax05}. For $a\geq p$, (\ref{a0-2-r0}) can be derived from \cite{LunTwe96}  using the fact that $P$ is stochastically monotone. Our method gives a unified and simple proof of (\ref{a0-1-r0})-(\ref{a0-2-r0}), and encompasses the case $r\neq0$.  

\noindent\begin{proof}{ of Proposition~\ref{BD}}
By elimination, given some $\lambda\in\C$, a necessary and sufficient condition for the two following equations: 
\begin{subequations}
\begin{gather} 
 q z^2 + (r - \lambda) z + p = 0, \label{ex1-cond1-bax-ter} \\
 a+(1-a)z = \lambda. \label{ex1-cond0-bax-gene-bis}
\end{gather}
\end{subequations}
to have a common solution $z\in\C$ is that 
\begin{eqnarray} 
0 = \left|\begin{array}{ccc}
1-a & a-\lambda & 0 \\
0 & 1-a & a-\lambda \\
q & r-\lambda & p \\
\end{array}
\right| 
&=& (1-\lambda) 
\left|\begin{array}{ccc}
1-a & 1 & 0 \\
0 & 1 & a-\lambda \\
q & 1 & p \\
\end{array}
\right| \nonumber \\ 
&=& (1-\lambda)\big[(\lambda-a)(1-a-q) +p(1-a) \big] \label{eq-lambda-glob}.
\end{eqnarray}

Assume that $a \neq 1-q$. Then $\lambda=1$ is a solution of (\ref{eq-lambda-glob}) and the other solution of (\ref{eq-lambda-glob}), say $\lambda(a)$, and the associated complex number in (\ref{ex1-cond0-bax-gene-bis}), say $z(a)$, are given by: 
\begin{equation} \label{lambda=}
\lambda(a) :=  a + \frac{p(1-a)}{a-1+q}\in\R \quad \text{ and } \quad  z(a) := \frac{p}{a+q-1}\in\R.
\end{equation}

Now, let $\lambda\in\C$ be such that $r+2\sqrt{pq} < |\lambda| < 1$, and assume that there exists $f\in\cB_{\widehat\gamma}$, $f\neq0$, such that $Pf=\lambda f$. Then Lemma~\ref{lem-ref-rand-vp} gives $f:= ({z_\lambda}^n)_{n\geq0}$ (up to a multiplicative constant), with $z_\lambda\in\C$ satisfying $|z_\lambda| <  \widehat\gamma$ and Equations (\ref{ex1-cond1-bax-ter})-(\ref{ex1-cond0-bax-gene-bis}). Thus we have  $\lambda=\lambda(a)$ and $z_\lambda=z(a)$, with 
$\lambda(a)$ and $z(a)$ given by (\ref{lambda=}). Conversely, we have $Pf_a=\lambda(a) f_a$ with $f_a=(z(a)^n)_{n\ge 0}$ since, by definition, $z(a)$ satisfies the equations (\ref{ex1-cond1-bax-ter})-(\ref{ex1-cond0-bax-gene-bis}) associated with $\lambda=\lambda(a)$. Now we must find the values $a\in(0,1)$ for which we have $r+2\sqrt{pq} < |\lambda(a)| < 1$ and $|z(a)|\leq \widehat\gamma$. This is the relevant question since Theorem~\ref{CNS-qc-Vgeo} gives the following properties:   
\begin{enumerate}[(i)]
	\item if $r+2\sqrt{pq} < |\lambda(a)| < 1$ and $|z(a)| <  \widehat\gamma$, then we have $\rho_{V_{\widehat\gamma}}(P) = |\lambda(a)|$ since $\lambda(a)$ is the only eigenvalue $\lambda$ of $P$ on $\cB_{\widehat\gamma}$ such that $r+2\sqrt{pq} < |\lambda| < 1$ (apply Theorem~\ref{CNS-qc-Vgeo} with any $r_0$ such that $r+2\sqrt{pq}<r_0<|\lambda(a)|$), 
	\item if $\lambda(a)$ or $z(a)$ do not satisfy the previous conditions, then we have $\rho_{V_{\widehat\gamma}}(P) = r+2\sqrt{pq}$ since there is no eigenvalue $\lambda$ of $P$ on $\cB_{\widehat\gamma}$ such that $r+2\sqrt{pq} < |\lambda| < 1$ (apply Theorem~\ref{CNS-qc-Vgeo} with any $r_0$ such that $r+2\sqrt{pq} < r_0 <1$). 

\end{enumerate}

First, observe that 
\begin{equation} \label{machin}
|z(a)|\leq \widehat\gamma  \ \Leftrightarrow\ |a-1+q|\geq \sqrt{pq}. 
\end{equation}
Hence, if $a\in(a_0,1)$ (recall that $a_0 := 1-q-\sqrt{pq}$), then $|z(a)| >\widehat\gamma$. Then (ii) gives (\ref{a0-2}). 

Second consider the case $a\in(0,a_0]$. Then we have $|z(a)| \leq \widehat\gamma$, and we have to study $\lambda(a)$.  Note that $\lambda'(a) = 1-pq/(a-1+q)^2$, so that the function $a\mapsto \lambda(a)$ is increasing on $(-\infty,a_0]$ from $-\infty$ to $\lambda(a_0)=r-2\sqrt{pq}$.  
Thus
$$\forall a\in(0,a_0],\quad \lambda(a)\leq r-2\sqrt{pq} < r+2\sqrt{pq}.$$ 
and the equation $\lambda(a) = -(r+2\sqrt{pq})$ has a unique solution $a_1\in(-\infty,a_0)$. 
For the continuation, it suffices to have in mind that $a_1<a_0$ and $\lambda(a_1) = -(r+2\sqrt{pq})$, that $\lambda(0) = p/(q-1)\in[-1,0)$ and finally that 
$$\lambda(0) - \lambda(a_1) = p/(q-1) +r+2\sqrt{pq} = \frac{(q-\sqrt{pq}-1)^2 - 2p}{1-q}.$$
When $2p \leq (1-q+\sqrt{pq})^2$, (\ref{a0-0}) follows from (ii). Indeed $|\lambda(a)| < r+2\sqrt{pq}$ since 
$$\forall a\in(0,a_0],\quad -(r+2\sqrt{pq}) = \lambda(a_1) \leq \lambda(0) < \lambda(a) < r+2\sqrt{pq}.$$
When $2p > (1-q+\sqrt{pq})^2$, we have $a_1\in(0,a_0]$ and:  
\begin{itemize}
	\item if $a\in(0,a_1)$, then (\ref{a0-1}) follows from (i). Indeed $r+2\sqrt{pq} < |\lambda(a)| < 1$ since 
	$$\forall a\in(0,a_1],\quad -1 \leq \lambda(0) < \lambda(a) < \lambda(a_1) = -(r+2\sqrt{pq})\, ;$$ 
	\item if $a\in[a_1,a_0]$, then (\ref{a0-3}) follows (ii). Indeed  $|\lambda(a)| < r+2\sqrt{pq}$ since 
	$$-(r+2\sqrt{pq}) =\lambda(a_1) \leq \lambda(a) < r+2\sqrt{pq}.$$ 
\end{itemize}

It remains to study the special case $a = 1-q$. Then $\lambda=1$ is the only solution of (\ref{eq-lambda-glob}). Again let $\lambda\in\C$ be such that $r+2\sqrt{pq} < |\lambda| < 1$, and let $f\in\cB_{\widehat\gamma}$, $f\neq0$, such that $Pf=\lambda f$. Then Lemma~\ref{lem-ref-rand-vp} gives $f:= ({z_\lambda}^n)_{n\geq0}$, with $z_\lambda\in\C$ satisfying Equations~(\ref{ex1-cond1-bax-ter})-(\ref{ex1-cond0-bax-gene-bis}), thus Equation~(\ref{eq-lambda-glob}).  Consequently there is no eigenvalue of $P$ such that $r+2\sqrt{pq} < |\lambda| < 1$.  Theorem~\ref{CNS-qc-Vgeo} applied with any $r_0\in(r+2\sqrt{pq},1)$ then gives $\rho_{V_{\widehat\gamma}}(P) = r+2\sqrt{pq}$. 
\end{proof}

\begin{rem} \label{rk-mrw-boucle}
Let us consider the time-discretised M/M/1 queue obtained using the uniformization technique \cite[Section~4.1]{HorSpi92}. The arrival and service rates are denoted by $\beta>0$ and $\mu>0$ respectively. For $0<h < 1/(\beta+\mu)$, the kernel $P_h$ is defined by $P_h = I+hQ$ where $Q$ is the generator of the continuous time birth-and-death process $(X_t)_{t\ge 0}$ of the number of customers in a M/M/1 queue, so that  
\begin{gather*}
	P_h(0,0):=1-\beta h,\quad P_h(0,1):= \beta h ; \\
	 \forall n\ge 1, \quad   P_h(n,n-1)=\mu h, \quad P_h(n,n):=1-h(\beta+\mu),\quad  P_h(n,n+1)=\beta h.
\end{gather*}
Assume that $\beta/\mu<1$ which is the ergodicity condition of the M/M/1 queue. 
Note that $\mu h, \beta h, 1-h(\beta+\mu),1-\beta h$ stand for $p,q,r,a$ with the notations of Proposition~\ref{BD} and that $p >q$, $a=P_h(0,0)\in(0,1)$ and $a+q-1=0$. Therefore, for any $0<h < 1/(\beta+\mu)$, $P_h$ is  $V_{\widehat\gamma}$-geometrically ergodic with $\widehat\gamma=\sqrt{\mu/\beta}$  and 
\[ 
\rho_{V_{\widehat\gamma}}(P_h)= 1-h(\sqrt{\mu}-\sqrt{\beta})^2.
\]
Note that $\rho_{V_{\widehat\gamma}}(P_h)$ is decreasing as $h$ growth to $1/(\beta+\mu)$. The minimum (not attained) $2\sqrt{pq}=2h\sqrt{\mu\beta}=2\sqrt{\mu\beta}/(\mu+\beta)$ would be obtained as $h:=1/(\beta+\mu)$ but in this case $r:=0$ and we retrieve a binary random walk (and its convergence rate $\rho_{V_{\widehat\gamma}}=2\sqrt{\mu\beta}$) corresponding to the embedded Markov chain associated with the birth-and-death Markov process. This last value was found to be the $\L^1$-convergence rate in \cite{LunTwe96} using the monotone structure of the Markov chain and to be the $\L^2$-convergence rate in \cite{RobTwe01} from the equality of the two rates using the reversibility of the model. Finally, note that exponential bounds for $(X_t)_{t\ge 0}$ are easily derived using that its semi-group $(P_t)_{t\ge 0}$ satisfies $P_t=\exp(Qt)= \sum_{k=0}^{\infty} {P_h}^k \exp(-t/h) (t/h)^k/k! $ and that $P_h$ has the same invariant probability measure than $(X_t)_{t\ge 0}$ for any $h<1/(\beta+\mu)$. Indeed, we obtain that for any $0<\rho < (\sqrt{\mu}-\sqrt{\beta})^2$: 
\begin{equation*}
 \forall f\in \cB_{\widehat\gamma}, \quad  \|P_tf -\pi(f)\|_{V_{\widehat\gamma}}= \sup_{n\in\N}\frac{|(P_tf)(n) -\pi(f)|}{V_{\widehat\gamma}(n)} =  O\big( \exp(-t\rho)\big).
\end{equation*}
Note that $(\sqrt{\mu}-\sqrt{\beta})^2$ is know to be the $\L^2(\pi)$-spectral gap for such a process (see e.g. \cite{Kar00}). 
\end{rem}
%

%=====================================================
%=====================================================
\section{$V$-geometrical ergodicity of  iterated function systems} \label{sect-strong-ergo}
%==================================================
%=================================================

In this section we assume that $(\X,d)$ is a Polish space equipped with its borel $\sigma$-algebra $\cX$. Let $(\V,\cV)$ be a measurable space. Let us first recall the definition of an iterated function system (IFS) of Lipschitz maps (see \cite{DiaFre99,Duf97}).  
\begin{defi}[IFS of Lipschitz maps] \label{defi-LIFS}
Let $(\vartheta_n)_{n\geq 1}$ be a sequence of $\V$-valued i.i.d.~random variables, with common distribution denoted by $\nu$. Let $X_0$ be a $\X$-valued r.v.~which is assumed to be independent of the sequence $(\vartheta_n)_{n\geq 1}$. Finally, let $F : (\V\times \X,\cV\otimes\cX)\r (\X,\cX)$ be jointly measurable and Lipschitz continuous with respect to the second variable. The associated iterated function system (IFS) is the sequence of random variables $(X_n)_{n\in\N}$ which, given $X_0$, is recursively defined by: 
\begin{equation} \label{ifs-formule} 
\forall n\geq 1,\ \ \ X_n := F(\vartheta_n, X_{n-1}).
\end{equation}
\end{defi}
Clearly $(X_n)_{n\in\N}$ is a Markov chain, with transition kernel $P$:  
\begin{equation} \label{def-Q} 
\forall x\in\X,\ \forall A\in\cX,\quad P(x,A) = \E[1_A\big(F(\vartheta_1,x)\big)] = \int_{\V} 1_A\big(F(v,x)\big)\, d\nu(v).  
\end{equation}

Let $x_0\in\X$ be fixed. For any $b\in [0,+\infty)$, we set 
$$\forall x\in\X,\ p(x):=1+\, d(x,x_0) \quad \text{ and } \quad V_b(x) := p(x)^b.$$ 
We simply denote by $(\cB_b,|\cdot|_b)$ the weighted-supremum Banach space $\cB_{V_b}$ associated with $V_b(\cdot)$, that is 
\begin{equation} \label{def-Bb} 
	\cB_b := \left\{f : \X\r\C \text{ measurable such that } |f|_b  := \sup_{x\in \X} \frac{|f(x)|}{p(x)^b} < \infty \right\}.
\end{equation}
If $\psi : (\X,d)\r(\X,d)$ is a Lipschitz continuous function, we define 
\begin{equation} \label{L-psi}
L(\psi) := \sup\left\{\frac{d\big(\psi(x),\psi(y)\big)}{d(x,y)},\ (x,y)\in \X^2,\ x\neq y\right\}.
\end{equation}
Let ${a}\in[1,+\infty)$. We denote by $\cL_{a}$ the following space:  
\begin{equation} \label{def-La} 
\cL_{a} := \left\{f : \X\r\C \ : \  m_{a}(f) :=  \sup\bigg\{\frac{|f(x)-f(y)|}{d(x,y)\, (p(x) + p(y))^{a-1}},\ (x,y)\in \X^2,\  
x\neq y\bigg\}\, <\, \infty \right\}.  
\end{equation}
Such Lipschitz-weighted spaces have been introduced in \cite{LeP83} to obtain quasi-compactness of Lipschitz kernels, see also \cite{MilRau89,Duf97,Ben98,HenHer01}. 

Note that, for $f\in\cL_{a}$, we have for all $x\in\X$: $|f(x)|\leq |f(x_0)| + 2^{a-1}\,m_{a}(f)\,p(x)^{a}$. Thus: 
$$\forall f\in\cL_{a}, \quad  |f|_{a} := \sup_{x\in \X}\ \frac{|f(x)|}{p(x)^{a}} < \infty \quad \text{ and } \quad \cL_a \subset \cB_a .$$
$\cL_a$ equipped with the norm $\|f\|_a := m_a(f) + |f|_a$ is a Banach space. 

In Subsection~\ref{subsect-strong-ergo} we give standard contraction/moment conditions, called $(\cC_{a})$, for $P$ to have a geometric rate of convergence on $\cL_a$. In Subsection~\ref{sub-rate-BV} the passage to the $V_a$-geometric ergodicity is investigated.  As already mentioned in Introduction, this section is close to \cite[Sect.~7.2-8]{Wu04}. Under Conditions~$(\cC_{a})$, an IFS satisfies the contractive property \cite[(7.2)]{Wu04} with respect to the Wassertein distance introduced by Wu. But his topological hypothesis on $P$ is replaced by our more general compactness assumption on $P^\ell : \cB_0\r\cB_a$ (for some $\ell\geq1$). The bounds obtained in Corollary~\ref{pro-D-K1} on $r_{ess}(P)$ and $\rho_{V_a}(P)$ (for $P$ acting on $\cB_a$) are the same as in \cite{Wu04}. The rates of convergence with explicit constants,  obtained in Subsection~\ref{rem-rand-walk-N} for discrete Lindley's random walks and in Subsection~\ref{ex1-auto} for autoregressive models, are not reported in \cite{Wu04}. 

%=====================================================
\subsection{Basic inequalities for IFS} \label{subsect-strong-ergo}
%==================================================
 
For all $x\in \X$, $v\in \V$ and $(v_1,\ldots,v_n)\in\V^n$  ($n\in\N^*$), define:  
\begin{subequations}
\begin{eqnarray}
& & F_v x : = F(v,x) \quad \text{ and } \quad L(v) := L(F_v) \label{Fv} \\
& & F_{v_n:v_1}:=F_{v_n}\circ \cdots\circ F_{v_1} \quad \text{ and } \quad L(v_n:v_1) := L(F_{v_n:v_1}). \label{Fn-F1}
\end{eqnarray}
\end{subequations}
By hypothesis we have $L(v)<\infty$, and so $L(v_n:v_1) <\infty$. Note that, for each $a\ge 1$, the limit 
$$\hat\kappa_a := \lim_{n\r+\infty}\E\left[L(F_{\vartheta_n:\vartheta_1})^{{a}}\right]^{\frac{1}{na}}$$
exists in $[0,+\infty]$, since the sequence $(\E[L(\vartheta_n:\vartheta_1)^{{a}}])_{n\in\N^*}$ is submultiplicative. 
Let us consider the following classical moment/contraction conditions: 

\noindent {\bf Conditions~$(\cC_{a})$}. {\it For some ${a}\in[1,+\infty)$: }
\begin{subequations}
\begin{eqnarray}
& & \E\left[d(F_{\vartheta_1}x_0,x_0)^{{a}}\right] < \infty \label{ite-moment-mu1-mu2}  \\
& &\widehat{\kappa}_a  < 1. \label{de-hatkappa-a}
\end{eqnarray}
\end{subequations}
\begin{pro}[see \cite{Duf97,Ben98}] \label{pi}
Under Conditions $(\cC_{a})$, there exists a unique $P$-invariant distribution, denoted by $\pi$, on $(\X,\cX)$, and we have $\pi(d(x_0,\cdot)^a)<\infty$. 
\end{pro}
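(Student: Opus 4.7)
The classical route is the \emph{backward iteration} trick. I would set $Y_0:=x_0$ and, for $n\ge 1$, $Y_n := F_{\vartheta_1}\circ F_{\vartheta_2}\circ\cdots\circ F_{\vartheta_n}(x_0)$. Because the $\vartheta_i$ are i.i.d., $Y_n$ and the forward iterate $X_n$ started at $x_0$ share the law $P^n\delta_{x_0}$, but (unlike the forward chain) $(Y_n)$ will converge almost surely. Writing $Y_{n+1}=F_{\vartheta_1:\vartheta_n}\bigl(F_{\vartheta_{n+1}}(x_0)\bigr)$ and using the Lipschitz property, $d(Y_{n+1},Y_n)\le L(\vartheta_1\!:\!\vartheta_n)\,d(F_{\vartheta_{n+1}}(x_0),x_0)$; independence and \eqref{ite-moment-mu1-mu2} then give $\E[d(Y_{n+1},Y_n)^{a}]\le a_n\,\E[d(F_{\vartheta_1}(x_0),x_0)^{a}]$, where $a_n:=\E[L(\vartheta_n\!:\!\vartheta_1)^{a}]$. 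The sequence $(a_n)$ is submultiplicative (from $L(\vartheta_{m+n}\!:\!\vartheta_1)\le L(\vartheta_{m+n}\!:\!\vartheta_{m+1})\,L(\vartheta_m\!:\!\vartheta_1)$ together with independence), so Fekete's lemma combined with \eqref{de-hatkappa-a} yields $a_n\le C\,\kappa^{na}$ for some $\kappa\in(\widehat\kappa_a,1)$.

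It follows that $\sum_n\|d(Y_{n+1},Y_n)\|_{L^a}<\infty$, so $(Y_n)$ is Cauchy in $L^a$ and, by a Borel--Cantelli argument, almost surely Cauchy in the complete space $(\X,d)$; hence it converges a.s.\ to some $\X$-valued limit $Y_\infty$, with $\E[d(Y_\infty,x_0)^a]<\infty$ by Fatou. I would set $\pi:=\mathrm{Law}(Y_\infty)$, which already yields $\pi(d(x_0,\cdot)^a)<\infty$. For invariance, I would introduce the shifted copy $\widetilde Y_n:=F_{\vartheta_2}\circ\cdots\circ F_{\vartheta_{n+1}}(x_0)$, which has the same law as $Y_n$ and (by the same argument applied to the shifted sequence $(\vartheta_{k+1})_{k\ge 1}$) converges a.s.\ to some $\widetilde Y_\infty\sim\pi$. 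Since $x\mapsto F(\vartheta_1,x)$ is a.s.\ continuous, $Y_{n+1}=F_{\vartheta_1}(\widetilde Y_n)\to F_{\vartheta_1}(\widetilde Y_\infty)$ a.s., and uniqueness of the limit forces $Y_\infty=F_{\vartheta_1}(\widetilde Y_\infty)$ a.s. As $\vartheta_1$ is independent of $\widetilde Y_\infty\sim\pi$, this identity reads exactly $\pi=P\pi$.

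For uniqueness, a synchronous coupling suffices. Given two $P$-invariant probabilities $\pi_1,\pi_2$, pick $X_0^1\sim\pi_1$ and $X_0^2\sim\pi_2$ on the same probability space as $(\vartheta_n)$, mutually independent and independent of the noise. By invariance, $X_n^i:=F_{\vartheta_n:\vartheta_1}(X_0^i)$ has law $\pi_i$, and $d(X_n^1,X_n^2)\le L(\vartheta_n\!:\!\vartheta_1)\,d(X_0^1,X_0^2)$. The decay $a_n\to 0$ implies $L(\vartheta_n\!:\!\vartheta_1)\to 0$ in probability, and combined with the independence of the two factors we get $d(X_n^1,X_n^2)\to 0$ in probability --- crucially, without any moment assumption on $d(X_0^1,X_0^2)$. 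Hence for every bounded uniformly continuous $f:\X\to\R$, $\pi_1(f)-\pi_2(f)=\E[f(X_n^1)-f(X_n^2)]\to 0$, and since bounded Lipschitz functions separate probability measures on a Polish space, $\pi_1=\pi_2$. The only real obstacle is the passage from the submultiplicative control of $a_n$ to a genuine geometric bound on the increments $\|d(Y_{n+1},Y_n)\|_{L^a}$; without \eqref{de-hatkappa-a} there is no way to ensure the summability on which the entire construction rests.
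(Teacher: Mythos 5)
Your proof is the classical backward-iteration argument, which is exactly the route taken in Duflo and Benda, the references the paper cites for this proposition (the paper itself gives no proof of it). The pullback construction of $Y_n$, its a.s.\ convergence, the $a$-th moment bound for $Y_\infty$, the invariance via the time-shifted copy $\widetilde Y_n$, and the synchronous-coupling argument for uniqueness are all carried out correctly. The one place to be careful is the passage from Fekete's lemma to the uniform bound $a_n\le C\kappa^{na}$: this needs $a_n:=\E[L(F_{\vartheta_n:\vartheta_1})^a]$ to be finite for \emph{every} $n$, whereas $(\cC_a)$ as stated only forces $a_n<\infty$ for $n$ large (since $\widehat\kappa_a<1$ gives $a_n<1$ eventually). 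For the a.s.\ convergence of $Y_n$ and for the uniqueness argument this is harmless, since those only exploit $a_n\to 0$ along large $n$; but the moment conclusion $\pi(d(x_0,\cdot)^a)<\infty$ genuinely uses the summability of $\|d(Y_{n+1},Y_n)\|_{L^a}$ from $n=0$ onward, so one should either check $a_n<\infty$ for all $n$ (as the cited sources effectively assume, e.g.\ by requiring $\E[L(\vartheta_1)^a]<\infty$, which is also what the paper's own Appendix~\ref{BB} implicitly uses) or argue around the finitely many potentially problematic terms.
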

\begin{pro}[\cite{Duf97}] \label{ergo} 
Under Conditions $(\cC_{a})$, the transition kernel $P$ continuously acts on $\cL_{a}$, and for any $\kappa\in(\widehat{\kappa}_a,1)$, there exists positive constants $c\equiv c_\kappa$ and $c'\equiv c'_\kappa$ such that:  
\begin{subequations}
\begin{eqnarray}
&\ & 
\forall n\geq1,\ \forall f\in \cL_{a},\quad |P^nf-\pi(f)1_\X|_{a} \leq c\, \kappa^n\, m_a(f) \label{vit-lip} \\
&\ & 
\forall n\geq1,\ \forall f\in \cL_{a},\quad \|P^nf-\pi(f)1_\X\|_{a}\leq  c'\, \kappa^n\, \|f\|_a.  \label{vit-lip-strong-ergo}
\end{eqnarray}
\end{subequations}
In particular, if $\kappa_1 := \E[L(\vartheta_1)^{a}]^{\frac{1}{a}} < 1$, then  
\begin{equation} \label{vit-lip-bis}
\forall f\in \cL_{a},\ \forall n\geq1,\quad |P^nf-\pi(f)1_\X|_{a} \leq  c_1 \, \kappa_1^n\, m_a(f), 
\end{equation}
where the  constant $c_1$ is defined by $c_1 := \xi^{(a-1)/a}\, \|\pi\|_1 \big(1 + \|\pi\|_a\big)^{a-1}$, with 
$$\xi := \sup_{n\geq 1}\sup_{x\in\X} \frac{(P^nV_a)(x)}{V_a(x)} < \infty\quad \text{ and } \quad  \|\pi\|_b := \left(\int_\X p(y)^b\, d\pi(y)\right)^{1/b}\ \ \text{ for } b:=1,a.$$ 
\end{pro}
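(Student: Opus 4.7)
The plan is a three-step attack on (\ref{vit-lip})--(\ref{vit-lip-bis}): first a basic Lipschitz-seminorm contraction, then a uniform drift bound for $V_a$, then a transfer to the supremum norm using the $P$-invariance of $\pi$. Continuous action on $\cL_a$ will fall out of the same estimates. Starting from the representation $(P^nf)(x)=\E[f(F_{\vartheta_n:\vartheta_1}x)]$ and the defining inequality of $\cL_a$, for all $f\in\cL_a$ and $x\neq y$,
\[
|(P^nf)(x)-(P^nf)(y)| \leq m_a(f)\, d(x,y)\, \E\bigl[L(\vartheta_n:\vartheta_1)\,(p(F_{\vartheta_n:\vartheta_1}x)+p(F_{\vartheta_n:\vartheta_1}y))^{a-1}\bigr].
\]
H\"older's inequality with conjugate exponents $a$ and $a/(a-1)$ separates the contraction factor from the size factor:
\[
\E\bigl[L(\vartheta_n:\vartheta_1)\,(p(F_{\vartheta_n:\vartheta_1}x)+p(F_{\vartheta_n:\vartheta_1}y))^{a-1}\bigr] \leq \E[L(\vartheta_n:\vartheta_1)^a]^{1/a}\,\E[(p(F_{\vartheta_n:\vartheta_1}x)+p(F_{\vartheta_n:\vartheta_1}y))^a]^{(a-1)/a}.
\]

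The first factor is controlled by $\widehat{\kappa}_a$: by definition of $\widehat{\kappa}_a$ and submultiplicativity of $n\mapsto\E[L(\vartheta_n:\vartheta_1)^a]$, for every $\kappa\in(\widehat{\kappa}_a,1)$ there is $K_\kappa$ with $\E[L(\vartheta_n:\vartheta_1)^a]^{1/a}\leq K_\kappa\kappa^n$ for all $n\geq 1$. The second is controlled by the uniform drift bound $\xi:=\sup_n\|P^nV_a/V_a\|_\infty<\infty$, which I obtain from the pointwise inequality
\[
p(F_{v_n:v_1}x) \leq L(v_n:v_1)\,p(x) + p(F_{v_n:v_1}x_0)
\]
combined with Minkowski's inequality, the moment assumption (\ref{ite-moment-mu1-mu2}), and a block iteration over $N$ steps where $N$ is chosen so that $\E[L(\vartheta_N:\vartheta_1)^a]<1$. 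Minkowski then gives $\E[(p(F_{\vartheta_n:\vartheta_1}x)+p(F_{\vartheta_n:\vartheta_1}y))^a]^{1/a}\leq\xi^{1/a}(p(x)+p(y))$. Dividing the resulting estimate by $d(x,y)(p(x)+p(y))^{a-1}$ yields the Lipschitz-seminorm contraction
\[
m_a(P^nf) \leq K_\kappa\,\xi^{(a-1)/a}\,\kappa^n\,m_a(f).
\]
Continuous action of $P$ on $\cL_a$ follows by specializing to $n=1$ and combining with the immediate bound $|Pf|_a\leq\xi|f|_a$.

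The transfer to the supremum uses the $P$-invariance of $\pi$. Setting $g_n:=P^nf-\pi(f)\cdot 1_\X$, invariance gives $\pi(g_n)=0$, hence
\[
|g_n(x)| = \Bigl|\int (g_n(x)-g_n(y))\,d\pi(y)\Bigr| \leq m_a(g_n)\int d(x,y)(p(x)+p(y))^{a-1}\,d\pi(y),
\]
and since $g_n$ and $P^nf$ differ by a constant, $m_a(g_n)=m_a(P^nf)$ is already controlled. Estimating the integral by $\tilde c\,p(x)^a$ with a $\pi$-dependent constant (using $d(x,y)\leq p(x)+p(y)$, Minkowski, and $p(x)\geq 1$) gives (\ref{vit-lip}); adding this to the Lipschitz contraction and using $m_a(f)\leq\|f\|_a$ yields (\ref{vit-lip-strong-ergo}).

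For the sharpened constant in (\ref{vit-lip-bis}) under $\kappa_1<1$, the independence of the $\vartheta_i$ and submultiplicativity of the Lipschitz constant give the exact bound $\E[L(\vartheta_n:\vartheta_1)^a]^{1/a}\leq\kappa_1^n$, eliminating the asymptotic factor $K_\kappa$. The precise form $c_1=\xi^{(a-1)/a}\|\pi\|_1(1+\|\pi\|_a)^{a-1}$ emerges from a careful splitting of the integral in the previous display: write $d(x,y)\leq d(x,x_0)+d(x_0,y)$ and apply H\"older with exponents $a$ and $a/(a-1)$, isolating a linear $\|\pi\|_1$-moment from the $(a-1)$-power term, which produces $(1+\|\pi\|_a)^{a-1}$, while factoring out $p(x)^a$ using $p(x)\geq 1$. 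The main obstacle is establishing $\xi<\infty$ under only the assumption $\widehat{\kappa}_a<1$: this forces the block iteration of the one-step drift, which in particular requires a uniform bound on $\E[p(F_{\vartheta_n:\vartheta_1}x_0)^a]$. A subsidiary delicacy is tracking constants closely enough in the supremum transfer to recover the precise product $\|\pi\|_1(1+\|\pi\|_a)^{a-1}$ rather than the looser $(1+\|\pi\|_a)^a$ that a uniform Minkowski bound would give.
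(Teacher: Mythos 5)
Your overall strategy mirrors the paper's: separate the Lipschitz factor from the weight factor by H\"older with exponents $a$ and $a/(a-1)$, control the weight factor through the uniform drift bound $\xi<\infty$ (obtained by a block iteration of the one-step estimate, which you correctly identify as the crux under the weak assumption $\widehat\kappa_a<1$), and transfer the seminorm estimate to a pointwise bound by exploiting the $P$-invariance of $\pi$. The one stylistic difference is in the transfer step: the paper couples the chains $(X_n^x,X_n^\pi)$ started from $(\delta_x,\pi)$ and works with the single expectation $\E\bigl[|f(X_n^x)-f(X_n^\pi)|\bigr]$ (Lemma~\ref{pro-rate-gene} applied to $\mu_1=\delta_x$, $\mu_2=\pi$), whereas you use the zero-mean identity $|g_n(x)|=\bigl|\int(g_n(x)-g_n(y))\,d\pi(y)\bigr|$ and only then bound $|g_n(x)-g_n(y)|\le m_a(P^nf)\,d(x,y)(p(x)+p(y))^{a-1}$. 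After Fubini these two routes reduce to bounding the same integral $\int d(x,y)(p(x)+p(y))^{a-1}\,d\pi(y)$, so they are equivalent in substance.

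The place I cannot follow is your claim to recover the precise constant $c_1=\xi^{(a-1)/a}\|\pi\|_1(1+\|\pi\|_a)^{a-1}$. In your order of operations (bound $m_a(P^nf)$ first, then integrate against $\pi$), the factor $d(x,y)$ and the factor $(p(x)+p(y))^{a-1}$ sit together inside the $\pi$-integral, and the natural estimate — $d(x,y)\le p(x)p(y)$ followed by H\"older/Minkowski on $\int p(y)(p(x)+p(y))^{a-1}\,d\pi(y)$ — produces $p(x)^a\,\|\pi\|_a(1+\|\pi\|_a)^{a-1}$, with $\|\pi\|_a$ in the linear slot rather than $\|\pi\|_1$ (and by Jensen $\|\pi\|_1\le\|\pi\|_a$, so this is genuinely weaker). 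Your description of the split ("write $d(x,y)\le d(x,x_0)+d(x_0,y)$ and apply H\"older ... isolating a linear $\|\pi\|_1$-moment") is too vague to check and does not obviously avoid this. The paper's Lemma~\ref{pro-rate-gene} obtains $\|\pi\|_1$ by first pulling the factor $\E[d(X_0^{\mu_1},X_0^{\mu_2})]$ out of the coupled expectation and only afterwards applying H\"older to $L(\vartheta_n{:}\vartheta_1)$ against the weight term; note, however, that this factorization is itself delicate since $p(X_n^{\mu_i})$ depends on both $X_0^{\mu_i}$ and the $\vartheta$'s, so the independence of the two alone does not justify it — you will need to condition first on the starting points and track this carefully if you want to match the stated constant. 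For (\ref{vit-lip}) and (\ref{vit-lip-strong-ergo}), where only the existence of some finite $c_\kappa,c'_\kappa$ is asserted, your argument is complete; the gap concerns only the explicit form of $c_1$ in (\ref{vit-lip-bis}).
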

Properties~(\ref{vit-lip})--(\ref{vit-lip-strong-ergo}) and (\ref{vit-lip-bis}) can be derived from the results of \cite[Chapter 6]{Duf97}. For convenience they are proved in Appendix~\ref{BB}. 
Note that the properties (\ref{vit-lip}) and (\ref{vit-lip-bis}) do not provide the $V_a$-geometric ergodicity since they are only established for $f\in\cL_a$. Indeed, in general the spaces $\cL_a$ and $\cB_a$ do not coincide, even for countable Markov chains. 
\begin{rem} \label{rem-ergo-bis} 
Under the conditions $(\cC_{a})$ and $\E[L(\vartheta_1)^{a}]^{\frac{1}{a}} < 1$, the proof of \emph{(\ref{vit-lip-bis})} (see Appendix~\ref{BB}) gives the following bound for the constant $\xi$ of Proposition~\ref{ergo}. Given any real number $\delta$ satisfying $\E[L(\vartheta_1)^{{a}}] < \delta < 1$, choose $r$ such that 
$$d(x,x_0) >  r\ \Rightarrow\ \E\left[\left(\frac{1 + L(\vartheta_1)\, d(x,x_0) + d(F_{\vartheta_1}x_0,x_0)}{1+d(x,x_0)}\right)^a\right] \leq \delta.$$
Then we have, with $\xi_1 := \E\big[\big(\max(1,L(\vartheta_1)) + d(F_{\vartheta_1}x_0,x_0)\, \big)^a\big]$, 
$$\xi \leq 1+\frac{\xi_1(1+r)^a}{1-\delta} .$$ 
\end{rem}
\begin{rem} \label{cor-f-Pf}
From Proposition~\ref{ergo} we deduce the following fact. Assume that Conditions $(\cC_{a})$ holds and that $f: \X\r\R$ is such that $P^\ell f\in\cL_a$ for some $\ell\in\N^*$. Then, for any $\kappa\in(\widehat{\kappa}_a,1)$, we have 
\begin{equation} \label{res-cor-f-Pf}
\forall n\geq \ell,\quad  |P^nf-\pi(f)1_\X|_{a} = \big|P^{n-\ell}(P^\ell f-\pi(f)1_\X)\big|_{a} \leq c\, \kappa^{n-\ell}\, m_a(P^\ell f), 
\end{equation}
where $c\equiv c_\kappa$ is the constant of Proposition~\ref{ergo}. If $\kappa=\kappa_1 := \E[L(\vartheta_1)^{a}]^{1/a} < 1$, then $c=c_1$. 
 Although this is not directly connected with the $V_a$-geometric ergodicity, such estimate may be of interest. This is illustrated in Corollary~\ref{cor-auto-1}.
\end{rem}
\begin{ex}[A simple example] \label{rem-IFS-unif}  Let $(X_n)_{n\in\N}$ be the real-valued IFS 
$$X_0\in\R, \qquad \forall n\ge 1, \quad X_n := \vartheta_nX_{n-1},$$
associated with $F_v x:=v x$ and with a sequence $(\vartheta_n)_{n\geq1}$ of i.i.d. random variables assumed to be independent of $X_0$. This kind of multiplicative Markov models are popular in finance. Let us assume that the $\vartheta_n$'s have a uniform probability distribution on $[0,1]$. The transition kernel $P(x,dy)$ of $(X_n)_{n\geq 0}$ is the uniform distribution on $[0,x]$ if $x>0$ (resp.~on $[x,0]$ if $x<0$). The Dirac distribution $\delta_0$ at $0$ is clearly $P$-invariant. Finally, setting $x_0:=0$ and $d(x,y):=|x-y|$, we have: $\forall a\in[1,+\infty)$
$$\E[d(F_{\vartheta_1}0,0)^{{a}}] = 0 \quad \text{and}\quad \kappa_1:=\E[L(\vartheta_1)^{a}]^{\frac{1}{a}} = \E[{\vartheta_1}^{a}]^{\frac{1}{a}} = \left(\frac{1}{a+1}\right)^{\frac{1}{a}}.$$
Consequently Inequality~(\ref{vit-lip-bis}) is valid. If $a:=1$, then the constant $c_1$ in (\ref{vit-lip-bis}) is equal to  
$1$ since we have $p(x)=1+|x|$ and $\pi=\delta_0$. 
\end{ex}

\begin{ex}[Autoregressive model] \label{rem-IFS-3}  
In this example, we prove that Inequality~(\ref{vit-lip-bis}) is fulfilled with the (optimal) value $\kappa_1:=\widehat{\kappa}_a$. Let $(X_n)_{n\in\N}$ be the real-valued IFS 
$$X_0\in\R, \qquad \forall n\ge 1, \quad X_n := \alpha\, X_{n-1} + \vartheta_n,$$
associated with $F_v x:=\alpha x + v$ where $\alpha\in(-1,1)$ is fixed and with a sequence $(\vartheta_n)_{n\geq1}$ of  centered random variables. This is the so-called autoregressive model of order 1 with an arbitrary centered noise. We take $d(x,y):=|x-y|$ and $x_0:=0$, so that  
 $L(v_n:v_1) = |\alpha|^n$ and $|F_v0| = |v|$. Then, we have $\widehat{\kappa}_a = |\alpha|$ for all $a\in(1,+\infty)$. In particular we have $\widehat{\kappa}_a = \E[L(\vartheta_1)^{a}]^{1/a} < 1$. Therefore, Conditions~$(\cC_{a})$ reduce to the moment condition $\E[|\vartheta_1|^{a}] < \infty$, and under this condition, Inequality~(\ref{vit-lip-bis}) holds  for $\kappa_1 := |\alpha|$. 
 
 Next, let us check that $|\alpha|$ is the minimal value of $\kappa$ (and $\kappa_1$) such that  Inequality~(\ref{vit-lip}) (and Inequality~(\ref{vit-lip-bis})) is valid. Since $\vartheta_1$ is centered, we have: $\forall x\in\X,\ \ \E[X_1\mid X_0=x]=\E[\alpha x+\vartheta_1] = \alpha x$. In other words, we have $P\phi = \alpha\phi$ where $\phi$ is the identity function on $\R$ (i.e.~$\phi(x):=x$). Note that $\phi$ is in $\cL_a$ for every $a\geq1$, and that $\pi(\phi)=0$ using $P\phi = \alpha\phi$. Consequently, under the condition~$\E[|\vartheta_1|^{a}] < \infty$, Inequality~(\ref{vit-lip}) holds for $f:=\phi$, and we can deduce from $P^n\phi = \alpha^n\phi$ that Inequality~(\ref{vit-lip}) cannot be valid with some $\kappa < |\alpha|$. 
\end{ex}
\subsection{Application to discrete Lindley's random walk} \label{rem-rand-walk-N}  
Recall that a Lindley random walk is defined on $\X:=[0,+\infty)$ by $X_n := \max(0,X_{n-1} + \vartheta_n)$, where $(\vartheta_n)_{n\geq1}$ is a sequence of $\R$-valued i.i.d.~random variables independent of $X_0$. 
The rate of convergence of Lindley's random walks with respect to $\|\cdot\|_V$ is investigated in \cite{Lun97,LunTwe96,Wu04}. More specifically, under the assumptions $\E[\gamma_0^{\vartheta_1}]<\infty$ for some $\gamma_0\in(1,+\infty)$ and $\E[\vartheta_1] < 0$, it is proved that there exists $\gamma\in(1,\gamma_0]$ such that $\E[\gamma^{\vartheta_1}] <1$, that $P$ is $V$-geometrically ergodic with $V(x) = \gamma^x$, and that $\rho_V(P) = \E[\gamma^{\vartheta_1}]$. The constant  $c_\rho$ of (\ref{strong-ergo}) associated with any $\rho\in(\E[\gamma^{\vartheta_1}],1)$ is not computed in  \cite{Lun97,LunTwe96,Wu04}. 

This subsection is devoted to the special case of discrete Lindley's random walks. More specifically, under the above  assumption, setting $V_\gamma:=(\gamma^n)_{n\in\N}$, we prove that the $V_\gamma$-geometric ergodicity property (\ref{strong-ergo}) holds true with the optimal rate $\rho:=\E[\gamma^{\vartheta_1}]$ and with an explicit (and simple) constant $c_\rho$. This result is based on Proposition~\ref{pi} and Proposition~\ref{ergo} using the distance\footnote{Such a distance is used in \cite[p.~296]{Wu04} to compute the rate of convergence of Lindley's random walks, but with no explicit constants.} $d(i,j) := |\gamma^i - \gamma^j|,\, $ $(i,j)\in\N^2$.  

Let $X_0$ be a $\N$-valued r.v.~and $(\vartheta_n)_{n\geq1}$ be a sequence of i.i.d.~$\Z$-valued r.v., independent of $X_0$. Let us introduce the sequence of $\N$-valued r.v.~$(X_n)_{n\in\N}$ defined by 
$$\forall n\geq1,\quad X_n := \max(0,X_{n-1} + \vartheta_n).$$ 
The common distribution $\nu:=(\nu_j)_{j\in\N}$ of the $\vartheta_n$'s is assumed to be such that 
\begin{equation} \label{mom-cont-ex-walk}
\exists\gamma_0\in(1,+\infty),\ \ \E[\gamma_0^{\vartheta_1}] = \sum_{j\geq 0} \nu_j\, \gamma_0^j < \infty \quad \text{ and } \quad \E[\vartheta_1] = \sum_{j\in\Z}j\, \nu_j < 0.
\end{equation}
\begin{pro} \label{pro-lind}
Under the assumptions~(\ref{mom-cont-ex-walk}), there exists $\gamma\in(1,\gamma_0]$ such that 
$$\kappa_1 := \E[\gamma^{\vartheta_1}] < 1,$$
and $(X_n)_{n\geq0}$ is $V_\gamma$-geometrically ergodic with $V_\gamma:=(\gamma^n)_{n\in\N}$. More precisely, we have the following properties: 
\begin{subequations}
\begin{eqnarray}
&\ & \forall f\in \cB_{V_\gamma},\ \forall n\geq1,\ \forall i\in\N, \quad |(P^nf)(i) - \pi(f)| \leq  c_1 \, {\kappa_1}^n\, m_1(f)\, \gamma^i \label{vit-lindley-fBV}\\
&\ & \forall (i,j)\in\N^2,\ \forall n\geq1, \quad \big|\P[X_n = j\, |\, X_0=i] - \pi(1_{\{j\}})\big| \leq  \frac{c_1\, \gamma^{i+1}}{(\gamma-1)\, \gamma^j}\ {\kappa_1}^n \label{vit-lindley} 
\end{eqnarray}
\end{subequations}
with $c_1 := \pi(V_{\gamma})$. Moreover, Inequality~(\ref{strong-ergo}) is fulfilled with $\rho:=\kappa_1$ and $c_\rho :=  \pi(V_{\gamma})(\gamma + 1)/(\gamma-1)$. 
\end{pro}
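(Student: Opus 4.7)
My plan is to realize the discrete Lindley walk as an IFS on $\X:=\N$ equipped with the non-standard metric $d(i,j):=|\gamma^i-\gamma^j|$, so that the IFS weight $p(i)=1+d(i,0)=\gamma^i$ coincides with $V_\gamma(i)$, and then to invoke Propositions~\ref{pi} and \ref{ergo} with $a:=1$ and reference point $x_0:=0$. A short bridging estimate from $\cL_1$ to $\cB_{V_\gamma}$ will then transfer the $\cL_1$-conclusion of Proposition~\ref{ergo} to $\cB_{V_\gamma}$, with all constants explicit.

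First, I would exhibit $\gamma$ by setting $\phi(t):=\E[t^{\vartheta_1}]$ on $[1,\gamma_0]$: the function is convex with $\phi(1)=1$, and dominated convergence (using $\E[\gamma_0^{\vartheta_1}]<\infty$ to dominate the difference quotient) gives $\phi'(1^+)=\E[\vartheta_1]<0$; hence $\phi(\gamma)<1$ for some $\gamma\in(1,\gamma_0]$, which is the sought $\kappa_1$. Next I would put $F(v,i):=\max(0,i+v)$ so $X_n=F(\vartheta_n,X_{n-1})$. A four-case check (whether $i+v$ and $j+v$ are $\geq 0$ or not) yields $|\gamma^{F_v(i)}-\gamma^{F_v(j)}|\leq \gamma^v\,|\gamma^i-\gamma^j|$, hence $L(F_v)\leq \gamma^v$, so $\E[L(\vartheta_1)]=\kappa_1<1$ and in particular $\widehat\kappa_1\leq\kappa_1<1$. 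The moment condition (\ref{ite-moment-mu1-mu2}) reduces to $\E[\gamma^{\max(0,\vartheta_1)}-1]<\infty$, which is immediate from $\E[\gamma^{\vartheta_1}]<\infty$. Thus Conditions~$(\cC_1)$ hold.

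Proposition~\ref{pi} then delivers an invariant $\pi$ with $\pi(V_\gamma-1)<\infty$, so $\pi(V_\gamma)<\infty$. Because $p=V_\gamma$ and $a=1$, the constant in (\ref{vit-lip-bis}) simplifies to $c_1=\|\pi\|_1=\pi(V_\gamma)$, and Proposition~\ref{ergo} gives
\[
\forall f\in\cL_1,\quad |P^nf-\pi(f)1_\X|_{V_\gamma}\,\leq\,\pi(V_\gamma)\,\kappa_1^n\,m_1(f).
\]
The key remaining step is the continuous inclusion $\cB_{V_\gamma}\subset\cL_1$ with the explicit bound $m_1(f)\leq\frac{\gamma+1}{\gamma-1}\,|f|_{V_\gamma}$, which I would derive from
\[
\frac{|f(i)-f(j)|}{|\gamma^i-\gamma^j|}\,\leq\, |f|_{V_\gamma}\,\frac{\gamma^i+\gamma^j}{|\gamma^j-\gamma^i|}\,=\,|f|_{V_\gamma}\,\frac{1+\gamma^{j-i}}{\gamma^{j-i}-1}\qquad(i<j),
\]
the right-hand side being decreasing in $j-i\geq 1$ and maximal at $j-i=1$. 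Inserting this into the previous display yields (\ref{vit-lindley-fBV}) and, taking the sup over $|f|_{V_\gamma}\leq 1$, the bound $\|P^n-\Pi\|_{V_\gamma}\leq \pi(V_\gamma)(\gamma+1)/(\gamma-1)\,\kappa_1^n$, which is (\ref{strong-ergo}) with the announced $\rho$ and $c_\rho$. Finally, (\ref{vit-lindley}) follows from (\ref{vit-lindley-fBV}) applied to $f=1_{\{j\}}$, noting that the extremal value of $1/|\gamma^j-\gamma^k|$ over $k\neq j$ is attained at $k=j-1$ (or $k=1$ if $j=0$), giving $m_1(1_{\{j\}})\leq \gamma^{1-j}/(\gamma-1)$.

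The main obstacle is conceptual rather than computational: everything hinges on the choice of the metric $d(i,j)=|\gamma^i-\gamma^j|$, which simultaneously makes $p$ coincide with $V_\gamma$, realises $L(F_v)\leq\gamma^v$ (so that the Lipschitz rate $\kappa_1$ matches the expected convergence rate $\E[\gamma^{\vartheta_1}]$), and forces the continuous embedding $\cB_{V_\gamma}\hookrightarrow\cL_1$ with the explicit constant $(\gamma+1)/(\gamma-1)$ that propagates into $c_\rho$. Any more naive metric (for example the usual $|i-j|$) would not yield the precise constants claimed, and in particular would not reproduce the optimal rate $\kappa_1$.
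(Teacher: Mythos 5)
Your proof is correct and takes essentially the same route as the paper: the entire argument hinges on choosing the metric $d(i,j)=|\gamma^i-\gamma^j|$ so that $p=V_\gamma$, $L(F_v)=\gamma^v$, and $\cB_{V_\gamma}$ coincides with $\cL_1$ with explicit constant $(\gamma+1)/(\gamma-1)$, after which Propositions~\ref{pi}--\ref{ergo} with $a=1$ deliver the explicit rate and constants exactly as you describe.
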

\begin{proof}{}
The first assertion holds since $G(\gamma) := \E[\gamma^{\vartheta_1}]$ satisfies $G(1)=1$ and $G\,'(1) = \E[\vartheta_1] <0$. To prove (\ref{vit-lindley-fBV})-(\ref{vit-lindley}), we apply Proposition~\ref{ergo} with the distance\footnote{The fact that, in Proposition~\ref{pro-lind}, the geometrical ergodicity is directly deduced from Proposition~\ref{ergo} is very particular. This is due to the choice of the distance in (\ref{choix-dist}).}
\begin{equation} \label{choix-dist}
\forall (i,j)\in\N^2,\quad d(i,j) := |\gamma^i - \gamma^j|.
\end{equation}

Note that we have with $x_0=0$: $\forall i\in\N,\ p(i) := 1+d(i,0) = \gamma^i$. Thus the space $\cB_{V_\gamma}$ corresponds to $\cB_1$ in (\ref{def-Bb}). Next observe that the spaces $\cB_1$ and $\cL_1$ coincide. Indeed, for all $f=(f(n))_{n\in\N}$ such that $|f|_1 := \sup_{n\in\N}|f(n)|/\gamma^{n} < \infty$, we have (use $\sup_{n\in\N^*} (\gamma^n + 1)/(\gamma^n-1) = (\gamma + 1)/(\gamma-1)$) 
\begin{equation} \label{comp-m1-weight}
m_1(f) := \sup\bigg\{\frac{|f(i)-f(j)|}{|\gamma^i - \gamma^j|},\, (i,j)\in\N^2,i\neq j\bigg\} \leq \frac{\gamma + 1}{\gamma-1}\, |f|_1. 
\end{equation}

Next we have: $\forall (v,i)\in\Z\times\N,\ F_vi := \max(0,i + v)$. Let us compute the Lipschitz (random) coefficient $L(\vartheta_1)$ with respect to the distance $d(i,j) := |\gamma^i - \gamma^j|$. We obtain for $(i,j)\in\N^2$ such that $i<j$ and for all $v\in\Z$: 
\begin{enumerate}[(a)]
\item $d\big(F_v i,F_v j\big) = \gamma^v\, \big|\gamma^i - \gamma^j\big|$ when $i + v\geq0$ and $j + v\geq0$;
\item $d\big(F_v i,F_v j\big) = \big|1 - \gamma^{j+v}\big| = \gamma^v\, \big|\gamma^{-v} - \gamma^j\big|$ when $i + v<0$ and $j + v\geq0$;
\item $d\big(F_v i,F_v j\big) = 0$ when $ i + v<0$ and $j + v<0$.
\end{enumerate}
In Case~(b), we have $i< -v \leq j$, thus $|\gamma^{-v} - \gamma^j| \leq |\gamma^i - \gamma^j|$. Thus 
$$L(v) := \sup_{(i,j)\in\N^2,i\neq j} \frac{d\big(F_v i,F_v j\big)}{|\gamma^i - \gamma^j|} =\gamma^v.$$
Finally, we obtain $\E[d(F_{\vartheta_1}0,0)] = \E[|\gamma^{\max(0,\vartheta_1)} - 1|] \leq \E[\gamma^{\vartheta_1}]$. Thus Conditions~(\ref{mom-cont-ex-walk}) implies that Conditions~$(\cC_{1})$ holds with $\E[L(\vartheta_1)] = \E[\gamma^{\vartheta_1}] <1$. Consequently, $P$ has an invariant distribution $\pi$ such that $\pi(V_{\gamma})<\infty$ from Proposition~\ref{pi}. Then, Property~(\ref{vit-lindley-fBV}) follows from (\ref{vit-lip-bis}) with $a:=1$ (note that $c_1=\int_\X p(y)\, d\pi(y)$). To obtain (\ref{vit-lindley}), use the fact that  $m_1(1_{\{j\}}) =  (\gamma-1)^{-1}\gamma^{1-j}$. 
\end{proof}
\begin{ex}[Simulation of a geometric distribution] \label{ex-metropolis}  
The Markov kernel $P$ on $\X:=\N$ defined for $p\in(0,1)$ by  
\begin{subequations}
\begin{eqnarray*}
& & \quad \quad \quad \quad \ \ P(0,0) := 1-p/2, \quad P(0,1) := p/2  \\ 
& & \forall i\in\N^*,\quad P(i,i-1) := 1/2,\quad P(i,i) := (1-p)/2,\quad P(i,i+1) := p/2. 
\end{eqnarray*}
\end{subequations}
arises from a Hastings-Metropolis sampler of a geometric distribution with parameter $p$. 
In \cite[Example~2]{MenTwe96}, $P$ is shown to be $V$-geometrically ergodic with $V=(p^{-n/2})_{n\geq0}$ and to have a convergence rate satisfying $\rho_V(P)\leq \rho_0 := \sqrt{p} + (1-p)/2$. More specifically: Property~(\ref{strong-ergo}) holds for any $\rho\in(\rho_0,1)$ with some constant $c_\rho$ such that $\lim_{\rho\r\rho_0} c_\rho = +\infty$. Proposition~\ref{pro-lind} allows us to improve this result. 

Indeed, $P$ can also be viewed as the Markov kernel of the discrete Lindley random walk  where $\vartheta_1$ is distributed as: $\P(\vartheta_1 = -1) =  1/2$, $\P(\vartheta_1 = 0) =  (1-p)/2$ and  $\P(\vartheta_1 = 1) =  p/2$. Consequently the estimates (\ref{vit-lindley-fBV})-(\ref{vit-lindley}) are valid for this  kernel, with here $\gamma:=p^{-1/2}$ and $\kappa_1 := \E[\gamma^{\vartheta_1}]=\rho_0$. In particular, Property~(\ref{strong-ergo}) holds true with $\rho:=\rho_0$ and $c_{\rho_0} := (1+\sqrt{p})^2/(1-\sqrt{p})$, namely we have for all $f=(f(n))_{n\in\N}$ such that 
$|f|_1 := \sup_n |f(n)|\, p^{n/2} < \infty$:
$$\forall n\geq1,\ \forall i\in\N,\quad p^{i/2}\, \big|P^nf(i) - \pi(f)\big| \leq \frac{(1+\sqrt{p})^2}{1-\sqrt{p}}\, |f|_1\, \left(\sqrt{p} + \frac{1-p}{2}\right)^n.$$
\end{ex}
%
%===================================
\subsection{$V_a$-Geometrical ergodicity of IFS} \label{sub-rate-BV}  
%===================================

Recall that we have set: $\forall x\in\X,\ V_a(x) := p(x)^a$. 
Let $(X_n)_{n\in\N}$ be  an IFS. Under Conditions~$(\cC_{a})$, Alsmeyer proved that, when $(X_n)_{n\in\N}$ is Harris recurrent and the support of $\pi$ has a non-empty interior, $(X_n)_{n\in\N}$ is $V_a$-geometrically ergodic, see  \cite[Prop.~5.2]{Als03}.  In Corollary~\ref{pro-D-K1} below, we prove that, under Conditions~$(\cC_{a})$, $(X_n)_{n\in\N}$ is $V_a$-geometrically ergodic with a convergence rate such that $\rho_{V_a}(P)\leq\widehat{\kappa}_a$, provided that $P^\ell : \cB_0\r\cB_a$ (for some $\ell\geq1$) is compact. The same result was proved in \cite[Prop.~7.2]{Wu04} under the following alternative hypotheses in place of our compactness assumption: $P$ and $P^N$ for some $N\geq 1$ are  Feller and strongly Feller respectively.

First observe that, under Conditions $(\cC_{a})$, Property~(\ref{vit-lip}) with $f:=V_a$ and $n:=1$ gives $PV_a\leq \xi_1\, V_a$ for some $\xi_1\in(0,+\infty)$, and so $P$ continuously acts on $\cB_a$. Second $P$ fulfills Condition~(\ref{cond-D}) with the function $V_a$. Indeed, let $\delta$ and $\kappa$ be such that $\widehat{\kappa}_a<\kappa<\delta<1$. Then there exists $N\in\N^*$ such that $c\, \kappa^N m_a(V_a) \leq \delta^N$, where $c\equiv c_\kappa$ is defined in (\ref{vit-lip}). Then Property~(\ref{vit-lip}) applied to $f:=V_a$ gives: $P^N V_a  \leq  \delta^N V_a + \pi(V_a)$. Since $\delta$ is arbitrarily close to $\widehat{\kappa}_a$, the real number $\delta_{V_a}(P)$ associated with $V_a$ via the definition (\ref{def-hat-rho}) satisfies: 
\begin{equation} \label{ifs-hat-rho}
\delta_{V_a}(P) 
\leq \widehat{\kappa}_a.
\end{equation}
\begin{cor} \label{pro-D-K1} 
Let us assume that Conditions~$(\cC_{a})$ hold true and that $P^\ell : \cB_0\r\cB_a$ (for some $\ell\geq1$) is compact. Then $P$ is $V_a$-geometrically ergodic, and we have 
\begin{equation} \label{rate-f-BV-bis}
r_{ess}(P) \leq \delta_{V_a}(P) \leq \widehat{\kappa}_a \quad \text{and} \quad \rho_{V_a}(P) \leq \widehat{\kappa}_a.
\end{equation}
\end{cor}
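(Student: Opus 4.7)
The plan is to feed the bound (\ref{ifs-hat-rho}) and Proposition~\ref{ergo} into Theorem~\ref{pro-qc-bis} and then into Theorem~\ref{CNS-qc-Vgeo}. Inequality~(\ref{ifs-hat-rho}) already gives $\delta_{V_a}(P)\leq\widehat{\kappa}_a<1$, so $P$ satisfies Condition~(\ref{cond-D}) with weight $V_a$; combined with the compactness hypothesis on $P^\ell:\cB_0\to\cB_a$, Theorem~\ref{pro-qc-bis} applies directly and delivers that $P$ is power-bounded and quasi-compact on $\cB_a$ with
$$r_{ess}(P)\leq\delta_{V_a}(P)\leq\widehat{\kappa}_a,$$
which is the first inequality of (\ref{rate-f-BV-bis}). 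Proposition~\ref{pi} supplies the $P$-invariant probability $\pi$ with $\pi(V_a)<\infty$ required to invoke Theorem~\ref{CNS-qc-Vgeo}.

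To extract the $V_a$-geometric ergodicity together with the rate bound $\rho_{V_a}(P)\leq\widehat{\kappa}_a$ from Theorem~\ref{CNS-qc-Vgeo}, I would verify that the only eigenvalue of $P$ on $\cB_a$ with modulus in $(\widehat{\kappa}_a,1]$ is $\lambda=1$ and that $\ker(P-I)=\C\cdot 1_\X$. Under these two properties the set $\cV_{r_0}$ of Theorem~\ref{CNS-qc-Vgeo} is empty for every $r_0\in(\widehat{\kappa}_a,1)$, so $\rho_{V_a}(P)\leq r_0$, and letting $r_0\downarrow\widehat{\kappa}_a$ yields the claimed bound. Let then $\lambda\in\C$ with $\widehat{\kappa}_a<|\lambda|\leq1$ and $f\in\cB_a\setminus\{0\}$ satisfy $Pf=\lambda f$; $P$-invariance of $\pi$ gives $\pi(f)=\lambda\pi(f)$, so $\pi(f)=0$ whenever $\lambda\neq1$. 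Provided that $f$ actually lies in $\cL_a$, choosing $\kappa\in(\widehat{\kappa}_a,|\lambda|)$ in Proposition~\ref{ergo} gives
$$|P^nf-\pi(f)1_\X|_a\leq c\,\kappa^n\,m_a(f),$$
and identifying $P^nf=\lambda^nf$ forces $f=0$ when $\lambda\neq1$ (divide by $|\lambda|^n$ and let $n\to\infty$) and $f=\pi(f)1_\X$ when $\lambda=1$ (the left-hand side is then independent of $n$), which is exactly what Theorem~\ref{CNS-qc-Vgeo} requires.

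The principal obstacle is precisely that Proposition~\ref{ergo} is only stated for $f\in\cL_a$, while the eigenvector $f$ is a priori known only to belong to $\cB_a$. The key technical step is therefore to upgrade its regularity and show $f\in\cL_a$. To this end I would bootstrap using Theorem~\ref{pro-tail-fct-propre}: since $|\lambda|>\widehat{\kappa}_a\geq\delta_{V_a}(P)$, that result applied with $V=V_a$ and $p=1$ furnishes $|f|\leq c\,V_a^{\ln|\lambda|/\ln\delta}$ for any $\delta>\widehat{\kappa}_a$ close enough to $\widehat{\kappa}_a$ realising~(\ref{cond-D}); hence $f$ is bounded when $|\lambda|=1$ and $f\in\cB_{a\beta}$ with some $\beta<1$ when $|\lambda|<1$. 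Jensen's inequality yields $\widehat{\kappa}_{a\beta}\leq\widehat{\kappa}_a<|\lambda|$, so the same result reapplies and the argument is iterable, eventually driving $f$ into $\cB_0$. Combining this with the identity $f=\lambda^{-n}P^nf$ and the Lipschitz-contractive estimates on $P^n$ built into Conditions~$(\cC_a)$ then provides the uniform control $m_a(f)<\infty$ that places $f$ in $\cL_a$. Carrying out this regularity step carefully, with constants that do not blow up along the iteration, is the main technical difficulty; once it is settled, the preceding paragraph closes the proof of the corollary.
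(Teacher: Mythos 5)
Your first step is right and matches the paper: (\ref{ifs-hat-rho}) plus Theorem~\ref{pro-qc-bis} give quasi-compactness on $\cB_a$ and $r_{ess}(P)\leq\delta_{V_a}(P)\leq\widehat{\kappa}_a$, and Proposition~\ref{pi} supplies $\pi$ with $\pi(V_a)<\infty$. You have also correctly isolated the crux: Proposition~\ref{ergo} only controls functions in $\cL_a$, whereas the eigenvector you produce from quasi-compactness on $\cB_a$ is a priori only in $\cB_a$. But the regularity bootstrap you propose to bridge this gap does not close, for two reasons.

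First, even if you succeeded in showing that $f$ is bounded, this does not give $m_a(f)<\infty$. The quantity $m_a$ is a weighted Lipschitz seminorm, and a bounded \emph{measurable} function has no Lipschitz regularity at all. The IFS kernel provides no smoothing here: $(Pf)(x)=\E\big[f(F(\vartheta_1,x))\big]$, and if the increments $\vartheta_n$ are, say, deterministic, then $Pf=f\circ F_\vartheta$ is no more regular than $f$. So the last sentence of your plan — using $f=\lambda^{-n}P^n f$ and the Lipschitz-contractive estimates to conclude $m_a(f)<\infty$ — cannot work under Conditions $(\cC_a)$ alone; you would need a smoothing hypothesis on $P$ that Corollary~\ref{pro-D-K1} does not assume. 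Second, the bootstrap on the exponent is also shaky: to reapply Theorem~\ref{pro-tail-fct-propre} at scale $a\beta$ you need the weak drift condition for $V_{a\beta}$, which (via the analogue of (\ref{ifs-hat-rho})) requires $a\beta\geq1$ for $\widehat{\kappa}_{a\beta}$ to even be defined; the iteration therefore cannot drive $f$ down to $\cB_0$, and there is no argument that the product of the $\beta_i$ tends to zero anyway.

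The paper avoids the problem entirely by a duality argument that keeps the eigenvalue but swaps the eigenvector. If $\lambda$ with $\widehat{\kappa}_a<|\lambda|\leq 1$ is an eigenvalue of $P$ on $\cB_a$, then because $r_{ess}(P^*)\leq r_{ess}(P)\leq\widehat{\kappa}_a<|\lambda|$, $\lambda$ is also an eigenvalue of the adjoint $P^*$ on $\cB_a'$; let $f'\in\cB_a'$ be an eigenfunctional. Since $\cL_a$ embeds continuously into $\cB_a$ ($|f|_a\leq\|f\|_a$), the restriction of $f'$ to $\cL_a$ lies in $\cL_a'$ and is an eigenfunctional of $(P_{|\cL_a})^*$. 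Hence $\lambda$ is a spectral value of $P_{|\cL_a}$, and since $P_{|\cL_a}$ is quasi-compact on $\cL_a$ with $r_{ess}(P_{|\cL_a})\leq\widehat{\kappa}_a<|\lambda|$ (this is the content of (\ref{vit-lip-strong-ergo})), $\lambda$ is in fact an eigenvalue of $P_{|\cL_a}$ with an eigenvector $g\in\cL_a$. Now Proposition~\ref{ergo} applies to $g$, yielding $\lambda=1$ as you outlined. Simplicity of $\lambda=1$ on $\cB_a$ is then obtained not from the Lipschitz estimate but from the structure of the limiting projection $\Pi$ (via Wu's Prop.~4.6 and the uniqueness of $\pi$ in Proposition~\ref{pi}). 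If you want to rescue your write-up, replace the bootstrap paragraph by this duality step.
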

\begin{proof}{} The fact that $P$ is a power bounded quasi-compact operator on $\cB_a$ with $r_{ess}(P) \leq \delta_{V_a}(P) \leq \widehat{\kappa}_a$ follows from (\ref{ifs-hat-rho}) and Theorem~\ref{pro-qc-bis}. From this property, we deduce that the adjoint operator $P^*$ of $P$ is quasi-compact on the dual space $\cB_a'$ of $\cB_a$ and $r_{ess}(P^*) \leq r_{ess}(P)$.\footnote{Use Inequality~(\ref{ray-ess}), the fact that an operator and its adjoint have the same operator norms, and finally the fact that the adjoint of a compact operator is compact.} 
To avoid confusion, we denote by $P_{|{\cal L}_a}$ the restriction of $P$ on $\cL_a$. From Proposition~\ref{ergo} we know that $P_{|{\cal L}_a}$ is a bounded linear operator on $\cL_a$. 
Let us prove that $P$ is $V_a$-geometrical ergodic from Conditions~(b) in Theorem~\ref{CNS-qc-Vgeo}.

Let $r_0\in(\widehat{\kappa}_a,1)$. We show that $\lambda=1$ is the only eigenvalue of $P$ on $\cB_a$ such that $r_0  \le |\lambda| \leq 1$. Let $\lambda\in\C$ be such an eigenvalue. Then $\lambda$ is also an eigenvalue of $P^*$ since $P$ and $P^*$ have the same spectrum and $r_{ess}(P^*) \leq r_{ess}(P) \leq \widehat{\kappa}_a < |\lambda|$. Thus there exists $f\in\cB_a'$ such that $f'\circ P = \lambda f'$. But $f'$ is also in $\cL_a'$ since we have: $\forall f\in\cL_a,\ |\langle f',f\rangle| \leq \|f'\|_{{\cal B}_a'}|f|_a \leq \|f'\|_{{\cal B}_a'}\|f\|_a$. This proves that $\lambda$ is an eigenvalue of the adjoint of  $P_{|{\cal L}_a}$. Hence $\lambda$ is a spectral value of $P_{|{\cal L}_a}$. More precisely $\lambda$ is an eigenvalue of $P_{|{\cal L}_a}$ since, from (\ref{vit-lip-strong-ergo}), $P_{|{\cal L}_a}$ is quasi-compact on $\cL_a$ and $r_{ess}(P_{|{\cal L}_a}) \le \widehat{\kappa}_a < r_0 \le |\lambda|$. Finally we have $\lambda=1$. Indeed, if $\lambda\neq1$, then any $f\in\cL_a$ satisfying $Pf = \lambda f$ is such that $\pi(f)=0$, thus $f=0$ from (\ref{vit-lip-strong-ergo}) (pick $\kappa \in(\widehat{\kappa}_a, r_0))$. 

Now we prove that $1$ is a simple eigenvalue of $P$ on $\cB_a$. Using the previous property and the fact that $P$ is power bounded and quasi-compact on $\cB_a$, we know that $P^n\r\Pi$ in operator norm on $\cB_a$, where $\Pi$ is the finite rank eigen-projection on $\ker(P-I) = \ker(P-I)^2$. Set $m=\dim\ker(P-I)$. From \cite[Prop.~4.6]{Wu04} (see also \cite[Th.~1]{Her08}), there exist $m$ linearly independent nonnegative functions $f_1,\ldots,f_m\in\ker(P-I)$ and probability measures $\mu_1,\ldots,\mu_m\in\ker(P^*-I)$ satisfying $\mu_k(V_a)<\infty$ such that: $\forall f\in\cB_a,\ \Pi f = \sum_{k=1}^m \mu_k(f)\, f_k$. That $1$ is a simple eigenvalue of $P$ on $\cB_a$ then follows from Proposition~\ref{pi}. 

From Theorem~\ref{CNS-qc-Vgeo} and the previous results, for any $r_0\in(\widehat{\kappa}_a,1)$ we have $\rho_{V_a}(P) \leq r_0$. Thus we have $\rho_{V_a}(P)\leq \widehat{\kappa}_a$.   
\end{proof}

\begin{rem} 
In simple examples (as in Example~\ref{rem-IFS-3}), the optimal rate in (\ref{vit-lip}) is equal to $\widehat{\kappa}_a$. In this case, we have $\rho_{V_a}(P) =  \widehat{\kappa}_a$  since $\cL_a\subset\cB_a$. 
\end{rem}
\begin{rem}
The second inequality in (\ref{rate-f-BV-bis}) means that, for any real number $\kappa \in(\widehat{\kappa}_a,1)$, there exists a constant $e\equiv e_{\kappa}$ such that 
\begin{equation} \label{rate-f-BV-bis-bis}
\forall n\geq1,\ \forall f\in\cB_{a},\ \ |P^nf-\pi(f)\, 1_{\X}|_a \leq e\, \kappa^{n}\, |f|_a.
\end{equation}
Unfortunately the previous approach does not give any information on the constant $e$ of (\ref{rate-f-BV-bis-bis}). Inequality~(\ref{res-cor-f-Pf}) is more precise but in general is only valid for a smaller class of functions $f$. 
\end{rem}

%=========================
\subsection{Applications to autoregressive models} \label{ex1-auto} 
%===========================
Assume that $\X:=\R^q$ and denote the Lebesgue measure on $\R^q$ by $dy$. Let $\|\cdot\|$ denote any norm of $\R^q$, and define $d(x,y):=\|x-y\|$ the associated distance on $\R^q$. Set $p(x):=1+\|x\|$ ($x_0:=0$) and let us consider $V_a(x) := (1+\|x\|)^a$ with $a\in[1,+\infty)$. We have $\lim_{\|x\|\r +\infty}V_a(x)=+\infty$. We know from Remark~\ref{Rem_Noyau_Continu} that any Markov kernel $P(x,dy) = K(x,y)\, dy$, associated with a continuous (in the first variable) function $K : \R^q\times\R^q\r[0,+\infty)$, is compact from $\cB_0$ to $\cB_a$. This fact allows us to apply Corollary~\ref{pro-D-K1} to some classical IFSs. As an illustration, Properties~(\ref{res-cor-f-Pf}) and (\ref{rate-f-BV-bis}) are detailed below for affine autoregressive (AR) models. Such applications can be easily extended for others IFSs, as for instance for functional autoregressive models and AR processes with ARCH errors (see Examples~\ref{ex-fct-IFS}-\ref{ex-arch}). 
 
Let $(X_n)_{n\in\N}$ be the IFS 
\begin{equation} \label{auto-formule} 
X_0\in\R^q, \qquad \forall n\geq 1,\quad X_n := AX_{n-1} + \vartheta_n,
\end{equation}
associated with $F(v,x) := Ax +v$ where $A=(a_{ij})$ is a fixed real $q\times q$-matrix.   We have $L(v) = \|A\|$ where $\|A\|$ denotes the induced norm of $A$ corresponding to $\|\cdot\|$, and $d(F_v0, 0) = \|v\|$. Consequently, Conditions~$(\cC_{a})$ hold for $a\in[1,+\infty)$  provided that we have: 
\begin{equation} \label{auto-mom-cont} 
\|A\| < 1\ \quad \mbox{ and }\quad\E\big[\|\vartheta_1\|^a\big]<\infty.
\end{equation}
Under these conditions, $P$ has an invariant probability measure from Proposition~\ref{pi}, and we can easily prove that $\limsup_{\|x\|\r+\infty} PV_a(x)/V_a(x) \leq \|A\|^a$. Thus 
\begin{equation} \label{kappa_AR}
\widehat{\kappa}_a = \|A\| = \E[L(\vartheta_1)^{a}]^{\frac{1}{a}} \quad \text{ and }\quad \delta_{V_a}(P)\leq  \|A\|^a. 	
\end{equation}
The following result was already proved in \cite[Sect.~8]{Wu04}. 
\begin{cor} \label{cor-auto-qc} 
Assume that Conditions (\ref{auto-mom-cont}) hold true for some $a\in[1,+\infty)$  and that the common distribution of $(\vartheta_n)_{n\geq1}$ is absolutely continuous with respect to the Lebesgue measure on $\R^q$. Then $P$ is $V_a$-geometrically ergodic and if $r(A)$ denotes the spectral radius of $A$, then: 
$$r_{ess}(P) \leq r(A)^a \quad \text{and} \quad  \rho_{V_a}(P) = r(A).$$
\end{cor}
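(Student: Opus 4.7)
The plan is to apply Corollary~\ref{pro-D-K1} in order to obtain $V_a$-geometric ergodicity together with $\rho_{V_a}(P)\leq\widehat\kappa_a=r(A)$ (the identity $\widehat\kappa_a=r(A)$ following from Gelfand's formula applied to the deterministic sequence $L(F_{\vartheta_n:\vartheta_1})=\|A^n\|$), then to sharpen the bound on $r_{ess}(P)$ to $r(A)^a$ by iterating the direct drift estimate, and finally to prove the matching lower bound $\rho_{V_a}(P)\geq r(A)$ by exhibiting an explicit affine eigenfunction of $P$.

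First I would verify the compactness hypothesis of Corollary~\ref{pro-D-K1} with $\ell=1$. Writing the density of $\vartheta_1$ as $g\in L^1(\R^q)$, the transition kernel has density $K(x,y)=g(y-Ax)$; although $g$ need not be continuous, $L^1$-continuity of translations yields, uniformly for $\|f\|_0\leq 1$,
\[
\bigl|(Pf)(x)-(Pf)(x')\bigr|\ \leq\ \int_{\R^q}\bigl|g(y-Ax)-g(y-Ax')\bigr|\,dy\ \xrightarrow[x'\to x]{}\ 0.
\]
Hence $\{Pf:\|f\|_0\leq 1\}$ is equicontinuous, and since $V_a(x)\to+\infty$ as $\|x\|\to+\infty$, the Ascoli argument recalled in Remark~\ref{Rem_Noyau_Continu} gives the compactness of $P:\cB_0\to\cB_a$. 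As (\ref{auto-mom-cont}) implies Conditions~$(\cC_a)$, Corollary~\ref{pro-D-K1} applies and yields the $V_a$-geometric ergodicity of $P$ together with $\rho_{V_a}(P)\leq r(A)$.

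To sharpen the essential-radius bound I would observe that for each $n\geq 1$, the iterate $P^n$ is itself the transition kernel of an affine AR model, with matrix $A^n$ and absolutely continuous noise $\sum_{k=1}^n A^{n-k}\vartheta_k$. The direct drift computation underlying (\ref{kappa_AR}), applied to $P^n$, therefore gives $\delta_{V_a}(P^n)\leq\|A^n\|^a$; moreover $P^n:\cB_0\to\cB_a$ is compact as the composition of $P^{n-1}:\cB_0\to\cB_0$ and $P:\cB_0\to\cB_a$. Theorem~\ref{pro-qc-bis} applied to $P^n$, combined with the identity $r_{ess}(P)=r_{ess}(P^n)^{1/n}$ from (\ref{ray-ess-puissance}), then yields
\[
r_{ess}(P)\ \leq\ \delta_{V_a}(P^n)^{1/n}\ \leq\ \|A^n\|^{a/n}\ \xrightarrow[n\to\infty]{}\ r(A)^a.
\]

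For the matching lower bound $\rho_{V_a}(P)\geq r(A)$, pick an eigenvalue $\lambda\in\C$ of $A$ with $|\lambda|=r(A)$, choose $v\in\C^q\setminus\{0\}$ satisfying $A^\top v=\lambda v$, and let $\mu:=(I-A)^{-1}\E[\vartheta_1]$ denote the stationary mean. The linear function $f(x):=\langle v,x-\mu\rangle$ lies in $\cB_a$ since $|f(x)|\leq\|v\|(1+\|\mu\|)V_a(x)$, and a short direct computation using $A^\top v=\lambda v$ together with $(I-A)\mu=\E[\vartheta_1]$ gives $Pf=\lambda f$. Hence $\lambda$ is an eigenvalue of $P$ on $\cB_a$ of modulus $r(A)$, and since $r_{ess}(P)\leq r(A)^a\leq r(A)=|\lambda|$, Theorem~\ref{CNS-qc-Vgeo} yields $\rho_{V_a}(P)\geq r(A)$ (via $\rho_{V_a}(P)\geq|\lambda|$ when $|\lambda|>r_{ess}(P)$, and $\rho_{V_a}(P)\geq r_{ess}(P)=r(A)$ in the edge case $|\lambda|=r_{ess}(P)$). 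Combined with the previous paragraph, this gives $\rho_{V_a}(P)=r(A)$. I expect the most delicate step to be the compactness verification: the density $g$ is only integrable, so Remark~\ref{Rem_Noyau_Continu}'s pointwise-continuity hypothesis must be replaced by the $L^1$-continuity of translations, which is what rescues the equicontinuity.
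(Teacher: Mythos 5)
Your proof is correct, and it follows a genuinely different (and more self-contained) route than the paper's on all three of the delicate points. For the compactness hypothesis, the paper shows $P$ is strongly Feller and then invokes Lemma~\ref{lem-fel-comp} to get $P^2:\cB_0\to\cB_a$ compact; you instead use the equicontinuity coming from $L^1$-continuity of translations and the Ascoli route sketched in Remark~\ref{Rem_Noyau_Continu} to get $P$ itself compact --- both are valid, and yours gives $\ell=1$. For the upper bounds, the paper derives $r_{ess}(P)\leq\|A\|^a$ and $\rho_{V_a}(P)\leq\|A\|$ first and then invokes the existence, for every $\varepsilon>0$, of an induced norm $\|\cdot\|_\varepsilon$ with $\|A\|_\varepsilon\leq r(A)+\varepsilon$ (together with the implicit observation that changing the norm on $\R^q$ does not change the Banach space $\cB_a$ up to equivalent norm, hence does not change $r_{ess}(P)$ or $\rho_{V_a}(P)$). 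You avoid the norm-changing trick entirely: the identity $\widehat\kappa_a=r(A)$ follows immediately from Gelfand's formula since $L(F_{\vartheta_n:\vartheta_1})=\|A^n\|$ is deterministic, which already gives $\rho_{V_a}(P)\leq r(A)$ through Corollary~\ref{pro-D-K1}; and for $r_{ess}(P)\leq r(A)^a$ you apply the drift computation to the $n$-th iterate $P^n$, which is again an affine AR with matrix $A^n$, getting $\delta_{V_a}(P^n)\leq\|A^n\|^a$ and then $r_{ess}(P)=r_{ess}(P^n)^{1/n}\leq\|A^n\|^{a/n}\to r(A)^a$. (Incidentally, your Gelfand computation shows that the equality $\widehat\kappa_a=\|A\|$ displayed in (\ref{kappa_AR}) should really read $\widehat\kappa_a=r(A)\leq\|A\|$; the paper's norm-changing step is precisely what compensates for this overstatement.) Finally, for the lower bound $\rho_{V_a}(P)\geq r(A)$ the paper defers to a computation in Wu's article, whereas you give a short and explicit argument by exhibiting the affine eigenfunction $f(x)=\langle v,x-\mu\rangle$ with $A^\top v=\lambda v$, $|\lambda|=r(A)$ and $\mu=(I-A)^{-1}\E[\vartheta_1]$, then applying Theorem~\ref{CNS-qc-Vgeo} (including the edge case $r_{ess}(P)=r(A)$). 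The expectation $\E[\vartheta_1]$ is finite because $a\geq1$, and $(I-A)^{-1}$ exists since $\|A\|<1$, so the construction is unconditional under (\ref{auto-mom-cont}). Net effect: same conclusions, but your argument is more explicit where the paper cites Wu, and it sidesteps the norm-equivalence bookkeeping by working directly with iterates of $P$.
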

\begin{proof}{} 
Under the assumptions of the corollary, if $P^2$ is compact from $\cB_0$ to $\cB_a$, then we can deduced from Corollary~\ref{pro-D-K1} and (\ref{kappa_AR}) that $P$ is $V_a$-geometrically ergodic  and 
\[r_{ess}(P) \leq \|A\|^a \quad \text{and} \quad \rho_{V_a}(P) \leq \|A\|
.\]
Let us check that $P^2$ is compact from $\cB_0$ to $\cB_a$. Let $\nu(\cdot)$ denote the density of $\vartheta_1$. Note that $P$ has the form  (\ref{gene-kernel-K-P}) with the Lebesgue measure on $\R^q$ and $K(x,y) := \nu\big(y-Ax\big)$ for $(x,y)\in \R^q\times\R^q$. If $\nu(\cdot)$ is continuous, then the desired property follows from Remark~\ref{Rem_Noyau_Continu}. In the general case we can proceed as follows. Let $f\in\cB_0$ such that $\|f\|_0\leq 1$. Then we have
$$\forall (x,x')\in\R^q\times\R^q, \qquad \big|(Pf)(x') - (Pf)(x)\big| \leq \int_{\R^q} \big|\nu(y-A(x'-x)) - \nu(y)\big|\, dy.$$
Since $t\mapsto \nu(\cdot-t)$ is continuous from $\R^q$ to the Lebesgue space $\L^1(\R^q)$, it follows that $P$ is strongly Feller. Then Lemma~\ref{lem-fel-comp} gives the desired property. 

 Next, since $r_{ess}(P) \leq \|A\|^a$ and $\rho_{V_a}(P) \leq \|A\|$ whatever the norm, we obtain $r_{ess}(P) \leq r(A)^a$ and $\rho_{V_a}(P) \le r(A)$ using the fact that, for any $\varepsilon >0$, there is an induced norm $\| \cdot \|_{\varepsilon}$ such that $r(A)\le \| A \|_{\varepsilon} \le r(A) +\varepsilon$. The proof is complete provided that $\rho_{V_a}(P) \ge r(A)$. This follows from an easy computation based on the affine recursion (\ref{auto-formule}), see  \cite[top of p 301]{Wu04}. 
\end{proof}

That the model is $V_a$-geometrically ergodic under Conditions~(\ref{auto-mom-cont}) is well-known. However, to the best of our knowledge, the rates of convergence obtained in the next statement are new. Assertions $(i)$ and $(ii)$ below require stronger hypotheses than in Corollary~\ref{cor-auto-qc} and \cite[Prop.~8.3]{Wu04}, but they give rates of convergence with explicit constants. 
Recall that the total variation distance between two probability measures $\mu_1$ and $\mu_2$ on $\X$ is defined by $\|\mu_1-\mu_2\|_{TV} = \sup_{B\in{\cal X}} |\mu_1(B) - \mu_2(B)|$. The gradient is denoted by $\nabla$.

\begin{cor} \label{cor-auto-1} 
Assume that the assumptions of Corollary~\ref{cor-auto-qc} are fulfilled, that the density $\nu(\cdot)$ of  $\vartheta_1$ is continuously differentiable on $\R^q$, and that there exist some positive constants $\beta$ and $b$ such that 
\begin{equation} \label{auto-v-v'} 
\forall v\in\R^q,\qquad 
\|\nabla\nu(v)\| \leq \frac{b}{(1+\|v\|)^{\beta}}.
\end{equation}
Then the following assertions hold true: 
\begin{enumerate}[(i)]
  \item If $\beta > q + \gamma$ for some $\gamma\in[0,a-1]$, then for each $f\in\cB_{\gamma}$, we have $Pf\in\cL_a$ and 
\begin{equation} \label{AR-rate-f-alpha}
\forall n\geq2,\ \forall x\in\R^q,\quad \big|\E_x[f(X_n)] - \pi(f)\big| \leq c_1\, d_f\, \|A\|^{n-1}\, (1+\|x\|)^a,
\end{equation}
where $d_f := m_a(Pf)$ and $c_1$ is the constant of Proposition~\ref{ergo}. 
  \item If $\beta > q$ and if the initial distribution $\mu$ is such that $I_\mu := \int (1+\|x\|)^ad\mu(x) < \infty$, then: 
\begin{equation} \label{AR-rate-var-total} 
\forall n\geq2, \quad \|\P_\mu(X_n\in \cdot) - \pi(\cdot)\|_{TV} \leq c_1\, d_0\, I_\mu\, \|A\|^{n-1} 
\end{equation}
where $c_1$ is the constant of Proposition~\ref{ergo}, and the constant $d_0$ can be easily expressed in function of the matrix $A$ and the derivative of $\nu$ (in link with the norm $\|\cdot\|$). For instance, if $\|\cdot\|$ is the supremum norm on $\R^q$ then: $d_0 := q\big(\max_k\sum_{i=1}^q |a_{ik}|\big) \int_{\R^q} \|\nabla\nu(y)\|\, dy$.  
\end{enumerate}
\end{cor}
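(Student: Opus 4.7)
Both parts follow from Remark~\ref{cor-f-Pf} applied with $\ell:=1$. For the present AR model we have $\kappa_1:=\E[L(\vartheta_1)^a]^{1/a}=\|A\|$ (see (\ref{kappa_AR})), so as soon as $Pf\in\cL_a$, inequality (\ref{res-cor-f-Pf}) with $\kappa:=\|A\|$ gives
\begin{equation*}
\forall n\geq 1,\quad \big|(P^nf)(x)-\pi(f)\big|\leq c_1\, m_a(Pf)\, \|A\|^{n-1}\, (1+\|x\|)^a.
\end{equation*}
Thus both (i) and (ii) reduce to showing that $Pf\in\cL_a$ and bounding $m_a(Pf)$ appropriately: by $d_f$ for $f\in\cB_\gamma$ in (i), and by $d_0\|g\|_0$ for $g\in\cB_0$ in (ii).

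\textbf{Key estimate.} Starting from $(Pf)(x)-(Pf)(x')=\int f(y)[\nu(y-Ax)-\nu(y-Ax')]\,dy$, I would apply the fundamental theorem of calculus to $t\mapsto \nu(y-Ax'-tA(x-x'))$, invoke Fubini, and perform the change of variable $u=y-Ax'-tA(x-x')$. Duality between $\nabla\nu$ and $A(x-x')$, together with $\|A\|\leq 1$, yield
\begin{equation*}
|(Pf)(x)-(Pf)(x')|\leq c_0\,\|A\|\,\|x-x'\|\int_0^1\!\!\int |f(u+A(x'+t(x-x')))|\,\|\nabla\nu(u)\|\,du\,dt.
\end{equation*}

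\textbf{Proof of (i).} For $f\in\cB_\gamma$, bound $|f(y)|\leq |f|_\gamma(1+\|y\|)^\gamma$; since $\|A\|\leq 1$, $1+\|u+A(x'+t(x-x'))\|\leq (1+\|u\|)\max(p(x),p(x'))$. Combining with (\ref{auto-v-v'}),
\begin{equation*}
|(Pf)(x)-(Pf)(x')|\leq C\,|f|_\gamma\,\|x-x'\|\,\max(p(x),p(x'))^\gamma,
\end{equation*}
with $C$ proportional to $\int(1+\|u\|)^{\gamma-\beta}du$, finite exactly when $\beta>q+\gamma$. Since $\gamma\leq a-1$ and $p(x)+p(x')\geq 2\geq 1$, we have $\max(p(x),p(x'))^\gamma\leq (p(x)+p(x'))^{a-1}$, hence $Pf\in\cL_a$, and the opening display yields (\ref{AR-rate-f-alpha}).

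\textbf{Proof of (ii) and main obstacle.} For bounded $g$, pulling $\|g\|_0$ out of the absolute value and redoing the change of variable reduces matters to $\int|\nabla\nu(u)\cdot A(x-x')|\,du$, finite as soon as $\beta>q$. With the sup norm, expanding $(A(x-x'))_i=\sum_k a_{ik}(x-x')_k$ coordinatewise and bounding $|\partial_i\nu|\leq\|\nabla\nu\|_\infty$ identifies the explicit constant $d_0=q\big(\max_k\sum_i|a_{ik}|\big)\int\|\nabla\nu\|\,dy$, so that $m_a(Pg)\leq d_0\|g\|_0$. The opening display then gives $|(P^ng)(x)-\pi(g)|\leq c_1d_0\|g\|_0\|A\|^{n-1}(1+\|x\|)^a$; integrating against $\mu$ and taking sup over $\|g\|_0\leq 1$ yields (\ref{AR-rate-var-total}). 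The only real technical point is the careful bookkeeping of norms and duality pairings needed to isolate the exact form of $d_0$; everything else is routine once the reduction to Remark~\ref{cor-f-Pf} is in hand.
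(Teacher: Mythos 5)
Your proof is correct and follows essentially the same route as the paper: reduce to Remark~\ref{cor-f-Pf} with $\ell=1$ and $\kappa_1=\|A\|$, then show that $Pf\in\cL_a$ and bound $m_a(Pf)$. The only difference is cosmetic: the paper encapsulates the regularity step in Proposition~\ref{CS} of Appendix~\ref{C} (differentiating under the integral sign via dominated convergence, then applying Taylor's inequality to $Pf$, with the domination hypotheses (\ref{K-dom-no-bounded})--(\ref{J-dom-eta}) verified from (\ref{auto-v-v'})), whereas you run the equivalent fundamental-theorem-of-calculus-plus-Fubini-plus-change-of-variable argument directly on $(Pf)(x)-(Pf)(x')$. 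Both yield the same estimates, including the threshold $\beta>q+\gamma$ and the explicit $d_0$; the paper's route has the small advantage of isolating a reusable lemma, while yours keeps everything self-contained.
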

\begin{proof}{}
Recall that $P(x,dy)= K(x,y)\,dy$ with $K(x,y):=\nu\big(y-Ax\big)$ so that the partial derivative of $K$ in the direction $x$ satisfies: $\partial_x K(x,y) = - A^*\nabla\nu\big(y-Ax\big)$ where $A^*$ is the adjoint matrix of $A$.

Assertion~\emph{(i)} holds from Remark~\ref{cor-f-Pf}, if we prove that $P(\cB_\gamma)\subset\cL_a$. This is deduced from Proposition~\ref{CS} in Appendix~\ref{C} if we check Conditions~(\ref{K-dom-no-bounded})-(\ref{J-dom-eta}). From  (\ref{auto-v-v'}) it can be easily seen that, for any $r>0$, there exists a constant $a_r$ such that we have for all $x\in\R^q$ satisfying $\|x\| \leq r$: $\forall y\in\R^q,\ \ \| \partial_x K(x,y) \| \leq a_r\, (1+\|y\|)^{-\beta}$. Since $\beta - \gamma > q$, Condition~(\ref{K-dom-no-bounded}) holds. Next, set $\ J(x,y) := (1+\|y\|)^{\gamma}\, \| \partial_x K(x,y) \|$ for $y\in\R^q$. We have 
\begin{eqnarray}
\int_{\R^q} J(x,y) \, dy &=& \int_{\R^q} \big(1+\|Ax+v\|\big)^{\gamma}\, \|A^*\nabla\nu(v)\| \, dv \nonumber \\
&& \leq C\, \big(~(1 + \|x\|)^{\gamma}\, \int_{\R^q}\|\nabla\nu(v)\| \, dv + \int_{\R^q} \|v\|^{\gamma}\, \|\nabla\nu(v)\| \, dv\big) \nonumber  \\
&&\leq C'\, (1+\|x\|)^{\gamma},  \label{int-ineg-alpha}
\end{eqnarray}
which proves Condition~(\ref{J-dom-eta}). Thus Proposition~\ref{CS} gives $P(\cB_{\gamma})\subset\cL_a$. 

 Under the assumptions of \emph{(ii)}, setting $d_B := d_{1_B}$, we deduce from $(i)$ (case $\gamma=0$) that:
\begin{equation} \label{AR-rate-var-total-inter} 
\forall B\in\cX,\ \forall n\geq2,\ \forall x\in\R^q,\quad \big|\P_x(X_n\in B) - \pi(B)\big| \leq c_1\, d_B\, \|A\|^{n-1}\, (1+\|x\|)^a.
\end{equation}
Assuming (for simplicity) that $\|\cdot\|$ is the supremum norm on $\R^q$, it follows from (\ref{def-deri-Pf}) that we have for all $B\in\cX$: 
$$\forall x\in\R^q,\quad \big\|\nabla(P1_B)(x)\big\| \leq \big(\max_k\sum_{i=1}^q |a_{ik}|\big) \int_{\R^q} \|\nabla\nu(y-Ax)\|\, dy.$$
Thus $d_B := m_a(P1_B) \leq d_0$ with $d_0$ given in Corollary~\ref{cor-auto-1}, and (\ref{AR-rate-var-total}) easily follows from (\ref{AR-rate-var-total-inter}). 
\end{proof}
\begin{rem} \label{d(f)-alpha}
Under the conditions of Assertion~$(i)$, an upper bound of the constant $d_f := m_a(Pf)$ for $f\in\cB_{\gamma}$ can be easily derived (see Appendix~\ref{C} for details): 
\begin{gather*}
d_f := m_a(Pf) \, \leq\, q |f|_\gamma\, b\, C_{\gamma,\beta}\, \big(\max_k\sum_{i=1}^q |a_{ik}|\big)\\
\text{with }\  C_{\gamma,\beta} := \sup_{x\in\R^q} (1+\|x\|)^{-\gamma}\, \int_{\R^q}   \frac{(1+\|y+Ax\|)^\gamma}{(1+\|y\|)^\beta}\, dy < \infty. 
\end{gather*}
\end{rem}
\begin{rem} \label{rem-cste-AR1} 
The constant $c_1$ in (\ref{AR-rate-f-alpha})-(\ref{AR-rate-var-total}) is that of Proposition~\ref{ergo}. Let us give an upper bound of $c_1$ under Conditions~\emph{(\ref{auto-mom-cont})}. Set 
$$M := \E[\|\vartheta_1\|^a]^{1/a}, \quad \varepsilon_0 := \frac{1-\|A\|}{2}\quad \text{and} \quad r := \max\left(0,\frac{1+ M - \varepsilon_0}{\varepsilon_0}\right).$$ 
Recall that $x_0:=0$ here. Then we have for any $x\in\R^q$ such that $\|x\| \geq r$
\begin{eqnarray*}
\E\left[\left(\frac{1 + L(\vartheta_1)\, d(x,x_0) + d(F_{\vartheta_1}x_0,x_0)}{1+d(x,x_0)}\right)^a\right]^{\frac{1}{a}} &=& \E\left[\left(\frac{1 + \|A\|\, \|x\| + \|\vartheta_1\|}{1+\|x\|}\right)^a\right]^{\frac{1}{a}}  \\
&\leq& \frac{1 + \|A\|\, \|x\|}{1+\|x\|} + \frac{\E\big[\|\vartheta_1\|^a\big]^{1/a}}{1+r} \\
&\leq& \|A\| + \frac{1+M}{1+r} \leq  \frac{1+\|A\|}{2}.
\end{eqnarray*}
Set $\rho :=  \big((1+\|A\|)/2\big)^a$, $\xi_1 := \E\big[(\|A\| + \|\vartheta_1\|)^a\big]$ and  $\xi := 1+\xi_1(1+r)^a/(1-\rho)$. Recall that $\|\pi\|_b := \big(\int_\X (1+\|x\|)^b\, d\pi(y)\big)^{1/b}$ for $b:=1,a$. Then we have from Proposition~\ref{ergo} and Remark~\ref{rem-ergo-bis}
$$c_1 \leq \xi^{\frac{a-1}{a}}\, \|\pi\|_1 (1 + \|\pi\|_a)^{a-1}.$$ 
\end{rem}
\begin{ex}[Contracting normals] \label{ex2-rob-twe}
 Assume that $\X:=\R$ and that $P(x,dy)$ is the Gaussian distribution $\cN(\theta x,1-\theta^2)$ for $\theta\in(-1,1)$. This kernel is studied in \cite[Example~8.3]{Bax05}, in the specific case $\theta:=1/2$ in \cite{Kol00} and the convergence of its ergodic averages is discussed in \cite[Example~4]{RobTwe99}. Note that $P$ is the  transition kernel of the IFS defined by $\forall n\geq 1,\ X_n := \theta X_{n-1} + \vartheta_n$, where $(\vartheta_n)_{n\geq 1}$ is a sequence of $\R$-valued i.i.d.~random variables, with common distribution $\cN(0,1-\theta^2)$. It can be easily checked that $P$ has the standard Gaussian distribution $\cN(0,1)$ as invariant probability measure $\pi$. Here we have $\cB_a:=\{f : \R\r\C,\ \sup_{x\in\R}|f(x)|/(1+|x|)^a < \infty\}$. 
 
Let $a\in[1,+\infty)$. Since Conditions~(\ref{auto-mom-cont}) hold, we have $r_{ess}(P) \leq |\theta|^a$ with $P$ considered as an operator on $\cB_a$. Concerning the geometric ergodicity, Corollary~\ref{cor-auto-qc} ensures that the convergence rate of $P$ on $\cB_a$ satisfies $\rho_{V_a}(P) = |\theta|$ for any $a\in[1,+\infty)$ (also use Example~\ref{rem-IFS-3} to obtain the last  equality). This improves all the earlier bounds obtained for $\rho_{V_a}(P)$ in this example (compare with \cite{Bax05} in case $a:=2$). 

Furthermore, for this example, Property~(\ref{AR-rate-var-total}) enables us to improve and simplify the results of \cite[Section~5]{RobTwe99} concerning the total variation convergence bounds. In fact, for any initial distribution $\mu$ such that $I_\mu := \int (1+|x|) d\mu(x) < \infty$, Inequality~(\ref{AR-rate-var-total}) with $a:=1$ gives (use $c_1 := 1+\sqrt{2/\pi}$, $d_0 := 2|\theta|/\sqrt{2\pi(1-\theta^2)}$): 
$$
\forall n\geq2, \quad \|\mu P^n - \pi\|_{TV} \leq  \frac{\sqrt{2\pi}+2}{\pi \sqrt{1-\theta^2}} \, I_\mu\, |\theta|^{n}. 
$$

Finally Assertion~(i) of Corollary~\ref{cor-auto-1} provides an interesting alternative result between the last one and the $V_a$-geometrical ergodicity. For instance, in case $a:=2$, Property~(\ref{AR-rate-f-alpha}) ensures that, for all $f\in\cB_1$, we have $Pf\in\cL_2$ and: 
\begin{equation} \label{rate-b1-b2}
\forall n\geq2,\ \forall x\in\R^q,\quad \big|\E_x[f(X_n)] - \pi(f)\big| \leq c\, d_f\, |\theta|^{n}\, (1+|x|)^2,
\end{equation}
with $c := 2\, \big(1+ \sqrt{2}/\sqrt{\pi}\big)\big(1+\sqrt{2+ 2\sqrt2/\sqrt\pi}\ \big)/|\theta|$ (first use that $P^nW_2(x)=2(1-\theta^{2n})+\theta^{2n}W_2(x)$ for $n\ge 1$ with $W_2(x) = 1+x^2$, so that $\sup_{n\ge 1}\sup_{x\in\R} P^nW_2(x)/W_2(x) =2$; second deduce from $V_2/2\le W_2\le V_2$ that $\xi \le 4 $; third check that $\|\pi\|_1=1+\sqrt{2/\pi}$, ${\|\pi\|_2}^2:=2(1+\sqrt{2/\pi})$). Note that (\ref{rate-b1-b2}) does not involve the $V_a$-geometrical ergodicity (either on $\cB_1$, or on $\cB_2$). However the rate of convergence in (\ref{rate-b1-b2}) is optimal and the associated constant $C$ is explicit.  The weighted-Lipschitz constant $d_f := m_2(Pf)$ can be easily computed thanks to Remark~\ref{d(f)-alpha}.
\end{ex}
The two last examples are classical extensions of the affine ARs. 

\begin{ex}[The functional autoregressive process] \label{ex-fct-IFS}
Let $(X_n)_{n\in\N}$ be the IFS  
\begin{equation} \label{auto-formule-fonctionel} 
X_0\in \R^q, \qquad \forall n\geq 1,\quad X_n := \psi(X_{n-1}) + \vartheta_n,
\end{equation}
associated with $F(v,x) := \psi(x) + v$ where $\psi : \R^q\r\R^q$ is a fixed differentiable function and $\vartheta_1$ has a density $\nu(\cdot)$. Assume that 
\begin{equation} \label{auto-mom-cont-fonctionel}
\alpha := \sup_{x\in\R^q}\|\psi'(x)\| < 1 \ \ \ \mbox{and}\ \ \ \exists a\in(1,+\infty),\  \E\big[\|\vartheta_1\|^a\big]<\infty. 
\end{equation}
Then Corollary~\ref{cor-auto-qc} extends to the IFS model (\ref{auto-formule-fonctionel}) as follows: $r_{ess}(P) \leq \alpha^a$ and $\rho_{V_a}(P) \leq \alpha$. A direct adaptation of the above arguments allows us to prove that, if the density $\nu(\cdot)$ of  $\vartheta_1$ satisfies \emph{(\ref{auto-v-v'})}, then all the conclusions (i) and (ii) of Corollary~\ref{cor-auto-1} hold true with again $\alpha$ in place of $\|A\|$. 
\end{ex}

\begin{ex}[Autoregressive process with ARCH(1) error] \label{ex-arch} 
Let $(X_n)_{n\in\N}$ be the real-valued IFS 
\begin{equation} \label{auto-arch} 
X_0\in\R, \qquad \forall n\geq 1,\ \ \ X_n := aX_{n-1} + \sigma(X_{n-1})\vartheta_n,
\end{equation}
where $\sigma : \R\r\R$ is defined by $\sigma(x):= \sqrt{b+cx^2}$ with fixed $a,b>0$. It can be easily seen that the associated Markov kernel is of the form $P(x,dy) = K(x,y)dy$, provided that the probability distribution of $\vartheta_1$ has a density $\nu(\cdot)$. Corollaries~\ref{cor-auto-qc} and \ref{cor-auto-1} can be adapted under suitable assumptions on $\nu(\cdot)$. 
\end{ex}

\newpage

\appendix

%==============================
%===============================
\section{Positive eigenvectors of the adjoint of a nonnegative operator on $\cB_V$} \label{B}
%================================
%==============================
\begin{apro} If $L$ is a positive bounded linear operator on $\cB_V$ such that $r(L)=1$, then there exists a nontrivial nonnegative continuous linear form $\eta$ on $\cB$ such that $\eta\circ L = \eta$. 
\end{apro}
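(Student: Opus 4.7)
The plan is to construct $\eta$ as a weak-$\ast$ cluster point of normalized resolvents. The key feature of $\cB_V$ I will exploit is that $V$ itself lies in the interior of the positive cone $K := \{f\in\cB_V:f\geq 0\}$: indeed $\|g-V\|_V < 1/2$ forces $V/2\leq g\leq 3V/2$. A direct consequence is the formula $\|\phi\|_V = \phi(V)$ for every $\phi\in\cB_V'$ with $\phi\geq 0$ (already used in Subsection~2.2), so the convex set
\[
C := \{\phi\in\cB_V':\phi\geq 0,\ \phi(V) = 1\}
\]
sits in the unit ball of $\cB_V'$, is weak-$\ast$ closed, and is therefore weak-$\ast$ compact by Banach-Alaoglu.

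The spectral input is that $1\in\sigma(L)$ and that the resolvent $R_\lambda := (\lambda I - L)^{-1} = \sum_{n\geq 0}\lambda^{-n-1}L^n$, which converges in operator norm for $\lambda > r(L)=1$ and is a positive operator, satisfies $\|R_\lambda\|\r +\infty$ as $\lambda\downarrow 1$. The membership $1\in\sigma(L)$ is the classical fact that the spectral radius of a positive operator on a Banach lattice is a spectral value, proved by a Pringsheim-type argument applied to the operator-valued power series $\sum\mu^n L^n$, whose coefficients are positive. Then $\|R_\lambda\|$ cannot stay bounded as $\lambda\downarrow 1$: otherwise $R_\lambda$ would extend continuously past~$1$, placing $1$ in $\rho(L)$.

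Now apply Banach-Steinhaus to the family $(R_\lambda^\ast)_{\lambda\in(1,2]}$, whose operator norms coincide with $\|R_\lambda\|$ and are unbounded: this furnishes $\mu\in\cB_V'$ with $\|R_\lambda^\ast\mu\|_V$ unbounded as $\lambda\downarrow 1$. Since $\cB_V'$ is itself a Banach lattice, write $\mu = \mu_+-\mu_-$ with $\mu_\pm\geq 0$; from $\|R_\lambda^\ast\mu\|\leq\|R_\lambda^\ast\mu_+\|+\|R_\lambda^\ast\mu_-\|$, after replacing $\mu$ by whichever of $\mu_\pm$ sustains the unboundedness I may and shall assume $\mu\geq 0$. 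Positivity of $L$ (hence of $R_\lambda^\ast\mu$) gives $\|R_\lambda^\ast\mu\|_V = (R_\lambda^\ast\mu)(V) = \mu(R_\lambda V)$.

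Pick a sequence $\lambda_n\downarrow 1$ with $a_n := \mu(R_{\lambda_n}V)\r +\infty$ and set $\eta_n := R_{\lambda_n}^\ast\mu/a_n\in C$. By weak-$\ast$ compactness of $C$, extract a subnet converging weak-$\ast$ to some $\eta\in C$; in particular $\eta(V)=1$, so $\eta\neq 0$. The resolvent identity $(\lambda_n I-L^\ast)R_{\lambda_n}^\ast\mu = \mu$, divided by $a_n$ and evaluated at an arbitrary $f\in\cB_V$, reads
\[
\lambda_n\,\eta_n(f) - \eta_n(Lf) = \mu(f)/a_n\r 0,
\]
and passing to the weak-$\ast$ limit along the chosen subnet yields $\eta(f)=\eta(Lf)=(L^\ast\eta)(f)$, i.e.\ $\eta\circ L=\eta$. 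I expect the delicate step to be the spectral one, namely certifying cleanly that the real number $r(L)=1$ belongs to $\sigma(L)$ and that $\|R_\lambda\|$ genuinely explodes as $\lambda\downarrow 1$; once this is in place, the Banach-Alaoglu / lattice-decomposition argument runs routinely.
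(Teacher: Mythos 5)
Your proof is correct and takes the same overall route as the paper: establish $1=r(L)\in\sigma(L)$, hence $1\in\sigma(L^*)$; use Banach--Steinhaus to get a nonnegative $\mu\in\cB_V'$ whose resolvent images blow up as the spectral parameter tends to $1$; normalize via $\|\phi\|_V=\phi(V)$ for $\phi\geq 0$; extract a weak-$*$ cluster point by Banach--Alaoglu; and pass to the limit in $(\lambda I-L^*)R_\lambda^*\mu=\mu$. The one genuine divergence is how the spectral input $1\in\sigma(L)$ is supplied. You cite the Bonsall/Schaefer theorem that the spectral radius of a positive operator on a Banach lattice is a spectral value; the paper instead reproves this from scratch by taking a peripheral $\lambda\in\sigma(L)$, using Banach--Steinhaus on $\cB_V$ to find $f_0\geq 0$ with $\|(\lambda_n I-L)^{-1}f_0\|_V\to\infty$, and transferring the blow-up to the point $1$ via the Neumann-series positivity bound $\|(\lambda_n I-L)^{-1}f_0\|_V\leq\|(|\lambda_n|I-L)^{-1}|f_0|\|_V$. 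You flag this as the delicate step, and rightly so: it is the content the paper is most careful about, and a self-contained proof needs exactly that positivity/Neumann-series estimate. Two minor points of hygiene. First, $\cB_V$ is a complex space, so the decomposition $\mu=\mu_+-\mu_-$ must pass through real and imaginary parts first; more economically, replace $\mu$ by its modulus $|\mu|$ and use $\|R_\lambda^*\mu\|_V\leq\|R_\lambda^*|\mu|\|_V$ (this is what the paper implicitly does when it simply asserts $f_0'\geq 0$). Second, your packaging of the weak-$*$ compactness via the convex set $C=\{\phi\geq 0:\phi(V)=1\}$ is a cleaner presentation of what the paper does with a cluster point in the unit ball and the neighborhoods $W(f,p)$; both are equivalent, but $C$ makes the positivity and normalization of the limit automatic rather than checked separately.
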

\begin{proof}{}  Since $r(L)=1$ and the spectrum $\sigma(L)$ of $L$ is closed, there exists $\lambda\in\sigma(L)$ such that $|\lambda| = 1$. Set $\lambda_n := \lambda(1+1/n)$. From the Banach-Steinhaus theorem, there exists $f_0\in\cB_V$, $f_0\geq 0$,  
such that $\|(\lambda_nI-L)^{-1}f_0\|_V\r+\infty$ when $n\r+\infty$. Using the Neumann series, namely 
$$|z| >1\ \Rightarrow\ (zI-L)^{-1} = \sum_{k\geq 0} z^{-(k+1)}\, L^k,$$ 
the positivity of $L$ (which gives $|L^kf_0| \leq L^k|f_0|$), and finally the fact that $\cB_V$ is a Banach lattice 
($\forall (f,g)\in\cB_V^2 : |f| \leq |g| \Rightarrow \|f\|_V = \|\, |f|\, \|_V  \leq \|\, |g|\, \|_V = \|g\|_V$), we obtain: 
$$\|(\lambda_nI-L)^{-1}f_0\|_V \leq \|(|\lambda_n|I-L)^{-1}|f_0|\, \|_V\, \r+\infty\ \quad \mbox{as}\ n\r+\infty.$$
This implies that $1\in\sigma(L)$. Now, let $\cB_V'$ denote the dual space of $\cB_V$ with the associated norm also denoted by $\|\cdot\|_V$, and let $L^*$ be the adjoint of $L$. 
Since $\sigma(L^*) = \sigma(L)$, we have $1\in\sigma(L^*)$. 

Let us set $\beta_n := 1+1/n$. We deduce from the Banach-Steinhaus theorem that there exists $f_0'\in\cB_V'$, $f_0'\geq 0$, (i.e.~$\forall f\in\cB_V : f\geq0\ \Rightarrow\ f_0'(f) \geq 0$) such that 
$$b_n := \|(\beta_n\, I-L^*)^{-1}\, f_0'\|_V\r+\infty \quad  \mbox{as}\ n\r+\infty,$$ 
where $I$ denotes here the identity map on $\cB_V'$. Let us define the following positive elements in the unit ball of $\cB_V'$: 
$$f'_n := 
\frac{1}{b_n}\, (\beta_n\, I-L^*)^{-1}f_0' = \frac{1}{b_n}\, \sum_{k\geq 0} \beta_n^{-(k+1)}\, (L^*)^kf_0' \ \ \ \ (n\in\N^*).$$
We have $f'_n\geq0$, thus $\|f'_n\|_V = f'_n(V) = 1$. 
Thanks to the Banach-Alaoglu theorem, the sequence $(f'_n)_n$ has a limit point, say $\eta$, in the unit ball of $\cB_V'$ for the weak topology in $\cB_V'$, that is: for all neighborhood $W$ of $\eta$ and for all $N\geq1$, there exists $n > N$ such that $f'_n\in W$. Now, given $f\in\cB_V$, consider the following special neighborhoods of $\eta$ with respect to the weak topology of $\cB_V'$:
$$W(f,p) = \left\{f'\in\cB_V' : \big|f'(f) - \eta(f)\big| < \frac{1}{p},\ \big|f'(Lf) - \eta(Lf)\big| < \frac{1}{p}\right\}.$$
Let us denote by $(n_p)_p \equiv(n_p(f))_p$ any increasing sequence of integer numbers such that we have $f'_{n_p}\in W(f,p)$ for all $p\geq 1$. First, pick any $f\in\cB_V$, $f\geq 0$: then  it follows from $f'_{n_p}(f)\geq 0$ and $|f'_{n_p}(f) - \eta(f)| < 1/p$ that $\eta(f)\geq 0$. So $\eta\geq 0$. Second, consider $f:=V$:  then we obtain $\eta(V) = 1$ from $f'_{n_p}(V) = 1$ and 
$|f'_{n_p}(V) - \eta(V)| < 1/p$, so $\eta\neq0$. Finally, let us fix any $f\in\cB_V$. We have 
\begin{eqnarray*}
\beta_n(f'_n-\eta)(f) - (f'_n-\eta)(Lf) &=& (\beta_n\, I-L^*)(f'_n)(f) - (\beta_n\, I-L^*)(\eta)(f) \\
&=& 
\frac{1}{b_n} f_0'(f) - (\beta_n\, I-L^*)(\eta)(f). 
\end{eqnarray*}
Replacing $n$ with $n_p\equiv n_p(f)$ gives  $(I-L^*)(\eta)(f) = 0$ as $p\r+\infty$. Namely: $\eta\circ L = \eta$. 
\end{proof}

%=================
%===============
\section{Proof of Lemma~\ref{lem-fel-comp} and Lemma~\ref{lem-exist-proba-inv}}  \label{ap-exist-pi}
%============
%============
\noindent \begin{proof}{ of Lemma~\ref{lem-fel-comp}} Obviously, we may assume $\ell:=1$. 
Let $(f_n)_{n\in\N}\in\cB_0^{\N}$ such that $\|f_n\|_0\leq1$. From the Banach-Alaoglu theorem, there exist a subsequence  $(f_{n_k})_{k\in\N}$ and $f\in\L^\infty(\eta)$ such that 
$$\forall g\in\L^1(\eta),\quad \lim_k \int f_{n_k}\, g\, d\eta = \int f\, g\, d\eta.$$
Since $K(x,\cdot)\in\L^1(\eta)$ for all $x\in\X$, we obtain: $\forall x\in\X,\ \lim_k (Pf_{n_k})(x) = (Pf)(x)$. Define  
$$\forall k\geq 1,\ \forall x\in\X,\quad \Delta_k(x) := \sup_{p,q\geq k} \big|(Pf_{n_q})(x)-(Pf_{n_p})(x)\big|.$$
The sequence $(P\Delta_k)_{k\ge 1}$ is non increasing from $\Delta_{k+1}\leq \Delta_k$ and the positivity of $P$. Next, since $\Delta_k\r 0$ (pointwise) and $\|\Delta_k\|_\infty\leq2$, we have $P\Delta_k\searrow 0$ (pointwise) from Lebesgue's theorem. Note that each $P\Delta_k$ is continuous from the strong Feller assumption. Then we deduce from the monotone Dini theorem that the sequence $(P\Delta_k)_{k\ge 1}$ uniformly converges to $0$ on each compact of $\X$. Now let $\varepsilon>0$. Then there exists $A>0$ such that $d(x,x_0) > A\, \Rightarrow\, V(x)^{-1} < \varepsilon/2$, and there exists $n_0\in\N$ such that: $\forall k\geq n_0,\ \sup_{d(x,x_0)\leq A} |(P\Delta_k)(x)| <  \varepsilon$. Since $V\geq1$ and $\|P\Delta_k\|_0\leq2$, we obtain for every  $k\geq n_0$: 
$$\|P\Delta_k\|_V = \sup_{x\in\X}\frac{|(P\Delta_k)(x)|}{V(x)} \leq \max\bigg(\sup_{d(x,x_0)\leq A} |(P\Delta_k)(x)|, \sup_{d(x,x_0) > A} \frac{|(P\Delta_k)(x)|}{V(x)}\bigg) < \varepsilon.$$
Finally observe that we have for all $p,q\geq k$ 
$$\big|P^2f_{n_q} - P^2f_{n_p}\big| \leq P\big(|Pf_{n_q} - Pf_{n_p}|) \leq P\Delta_k,$$
therefore we have: $\forall p,q\geq n_0,\ \|P^2f_{n_q} - P^2f_{n_p}\|_V \leq \|P\Delta_{n_0}\|_V< \varepsilon$. We have proved that the sequence $(P^2f_{n_k})_{k\ge 1}$ is Cauchy in $\cB_V$. Hence it converges in $\cB_V$. 
\end{proof}

\noindent \begin{proof}{ of Lemma~\ref{lem-exist-proba-inv}}
We know from (\ref{cond-D}) that $P$ is power-bounded on $\cB_V$. Let $x_0\in\X$. Then we have $K:=\sup_n(P^nV)(x_0) < \infty$. Let $\pi_n$, $n\geq 1$, be the probability measure on $(\X,\cX)$ defined by: $\forall B\in\cX,\ \pi_n(1_B) = \frac{1}{n}\sum_{k=0}^{n-1}(P^k1_B)(x_0)$. Then Markov's inequality gives 
$$\forall n\geq 1,\ \forall \alpha\in(0,+\infty),\quad \pi_n\big(1_{\{V>\alpha\}}\big) \leq \frac{\pi_n(V)}{\alpha} \leq \frac{K}{\alpha}.$$ 
Thus the sequence $(\pi_n)_{n\ge 1}$ is tight, and we can select a subsequence $(\pi_{n_k})_{k\in\N}$ weakly converging to a probability measure $\pi$, which is clearly $P$-invariant. Next, for $p\in\N^*$, define $V_p(\cdot) = \min(V(\cdot),p)$. Then 
$\forall k\geq 0,\ \forall p\geq 0,\ \pi_{n_k}(V_p) \leq \pi_{n_k}(V) \leq K$. Since $V_p$ is continuous and bounded on $\X$, we obtain: $\forall p\geq 0,\ \lim_k\pi_{n_k}(V_p) = \pi(V_p) \leq K$. The monotone convergence theorem then gives $\pi(V) < \infty$. 
\end{proof}

%=================
%===============
\section{Additional material for discrete Markov chains}  \label{annexe_discret}
%============
%============

%================================
\subsection{Complements for the proof of Corollary~\ref{cor-geo-I-A}} \label{app-compl-cor3}
%===============================
\begin{lem} \label{IA_spectre}
Under Conditions~\emph{(\ref{cond-ireduc})}-\emph{(\ref{cond-aperiod})}, $1$ is a simple eigenvalue and the unique eigenvalue of modulus one of $P$ on $\cB_V$. 
\end{lem}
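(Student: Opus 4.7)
The plan is to establish the two assertions (simplicity, and absence of other modulus-one eigenvalues) after first reducing both to the setting of bounded eigenvectors, where classical arguments for positive recurrent irreducible discrete chains apply. The key reduction, common to both, is Theorem~\ref{pro-tail-fct-propre}: applied with $p = 1$ to any $\lambda$ with $|\lambda| = 1$, it gives $|f| \leq c\, V^{\ln|\lambda|/\ln\delta} = c$, so every modulus-one eigenvector of $P$ on $\cB_V$ is in fact bounded on $\N$. This is the bridge from the weighted-supremum world to the uniformly bounded world.

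For simplicity of $\lambda = 1$, let $f \in \cB_V$ with $Pf = f$; by the above, $f$ is bounded. Under Condition~(\ref{cond-D}), Lemma~\ref{lem-exist-proba-inv} provides a $P$-invariant probability $\pi$; Condition~(\ref{cond-ireduc}) makes $\pi$ unique and charges every state, and the chain is positive recurrent. I would then invoke the standard Liouville argument: under $\P_\pi$, the sequence $(f(X_n))$ is a bounded martingale, hence converges almost surely, while recurrence forces it to visit each value $f(j)$ infinitely often. The two facts together force all the $f(j)$ to coincide, so $f$ is constant, giving $\ker(P - I) = \C\cdot 1_\X$.

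For uniqueness, suppose $Pf = \lambda f$ with $f \in \cB_V \setminus \{0\}$ and $|\lambda| = 1$. By the reduction step $f$ is bounded, and positivity of $P$ yields $P|f| \geq |Pf| = |f|$ pointwise; integrating against $\pi$ and using its full support forces $P|f| = |f|$ everywhere. Applying the first step to $|f|$ shows $|f| \equiv c$ for some $c > 0$. In the identity $\lambda f(k) = \sum_j P(k,j)\, f(j)$, the left-hand side has modulus $c$ while the right-hand side is a convex combination of points on the circle of radius $c$; equality of moduli forces $f(j) = \lambda f(k)$ for every $j$ with $P(k,j) > 0$. Iterating, $f(j) = \lambda^n f(k)$ whenever $P^n(k,j) > 0$. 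Specializing to $k = j = i_0$, where $i_0$ is the state provided by Condition~(\ref{cond-aperiod}), gives $\lambda^n = 1$ for every $n \in \cR_{i_0, i_0}$. Since $\cR_{i_0, i_0}$ is closed under addition and has g.c.d.~$1$, it contains two consecutive integers, forcing $\lambda = 1$.

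The main obstacle I anticipate is making the Liouville step rigorous, but for positive recurrent irreducible discrete-time Markov chains this is entirely classical and requires no new idea. Everything else is linear-algebraic bookkeeping: the reduction to boundedness is immediate from Theorem~\ref{pro-tail-fct-propre}, and the passage from $|f| \equiv c$ to $\lambda^n = 1$ on the return-time set is a short pigeonhole/triangle-inequality computation.
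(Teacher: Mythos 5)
Your proof is correct, and the uniqueness part runs along essentially the same lines as the paper's (identical $P|f|=|f|$ step, then the strict-convexity-of-the-circle argument followed by aperiodicity). The route to simplicity of $\lambda=1$ is genuinely different, though. The paper does not invoke Theorem~\ref{pro-tail-fct-propre} at all: it only needs $\pi(V)<\infty$ to make $\pi(|f|)$ finite, and then proves simplicity by a purely algebraic centering trick --- replace $g$ by $f=g-g(0)1_\N$ so that $Pf=f$ and $f(0)=0$, deduce $P|f|=|f|$, and iterate to get $0=|f(0)|=\sum_j P^n(0,j)|f(j)|$, whence $|f(j)|=0$ for every $j$ reachable from $0$, i.e.\ for every $j$ by irreducibility. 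You instead pass through Theorem~\ref{pro-tail-fct-propre} to force boundedness of the eigenvector and then run a probabilistic Liouville argument ($f(X_n)$ a bounded martingale under $\P_\pi$, recurrence forcing every value to be hit infinitely often). Both are valid; your approach imports martingale convergence and recurrence, the paper's stays entirely within the Chapman--Kolmogorov/positivity calculus and is arguably the more elementary route. Note also that the boundedness reduction via Theorem~\ref{pro-tail-fct-propre} is not actually needed even for the martingale step, since $\pi(|f|)\leq\|f\|_V\pi(V)<\infty$ already gives $L^1$-boundedness; so that preliminary step is dispensable.
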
 
\begin{proof}{}
First prove that the support of $\pi$ coincides with $\N$. We have 
$$\forall j\in\N,\ \forall n\geq 1,\quad \pi(j) := \pi(P^n1_{\{j\}}) = \sum_{i\geq0} \pi(i)\, P^{n}(i,j).$$
Hence, if $\pi(j)=0$ for some $j\in\N$, then we would obtain $\pi(i)=0$ whenever $P^{n}(i,j)>0$, thus $\pi(i)=0$ for all $i\in\N$ from Condition~(\ref{cond-ireduc}), which is impossible. 

Second, we have the following implication: $\forall \lambda\in\C,\ |\lambda|=1,\ \forall f\in\cB_V$, 
\begin{equation} \label{implic-f-mod-f}
Pf=\lambda f\ \Rightarrow\ P|f|=|f|.
\end{equation}
Indeed, we deduce that $|f|\leq P|f|$ from $Pf=\lambda f$ and the positivity of $P$. Then it follows from $\pi(P|f|-|f|)=0$ that $P|f|=|f|$ $\pi$-a.s. Thus  $P|f|=|f|$ since the support of $\pi$ is $\N$.

Third, we prove that 1 is a simple eigenvalue of $P$ on $\cB_V$. Let $g\in\cB_V$ such that $Pg=g$, and set $f:=g-g(0)1_\N$. Then $Pf=f$ so that $P|f|=|f|$. We have: $\forall n\geq 1,\ 0 = |f(0)| = \sum_{j\geq0} P^{n}(0,j)\, |f(j)|$. Condition~(\ref{cond-ireduc}) then yields $f\equiv0$, namely $g$ is constant. 

Finally, let $\lambda\in\C$, $|\lambda|=1$, and let $f\in\cB_V$, $f\neq0$, be such that $Pf=\lambda f$. It follows from (\ref{implic-f-mod-f}) and the last statement that, $\forall n\in\N,\ |f(n)|=1$ (up to a multiplicative constant). From $|\lambda|=1$, $|f|\equiv1$, and $\forall n\geq 1,\ \lambda^n\, f(0) = \sum_{j\geq0} P^{n}(0,j)\, f(j)$, we obtain: $P^{n}(0,j)>0 \Rightarrow \lambda^n\, f(0) = f(j)$. In particular: $n\in\cR_{0,0} \Rightarrow \lambda^n\, f(0) = f(0)$. This gives: $\forall(m,n)\in\cR_{0,0}\times\cR_{0,0},\ \lambda^{n-m} = 1$, hence $\lambda=1$ by Condition~(\ref{cond-aperiod}). 
\end{proof}

%===============
\subsection{Random walks with bounded increments} \label{proof_prop1_NHRW}
%============

\subsubsection{Proof that the integer $\ell$ in (\ref{def-ell-RRW}) is well-defined} \label{EllWellDef}
%============
   Set $A(\gamma) := \phi(\gamma)\gamma^b=\sum_{k=0}^{2b} a_{-b+k}\gamma^k$ where $\phi(\gamma)$ is defined in (\ref{non-hom-cont}) and $a_0\neq 1$. The integer $\ell$ in (\ref{def-ell-RRW}), if well-defined, can be equivalently characterized from Leibniz's formula by
$$\forall k\in\{1,\ldots,\ell-1\},\quad \phi^{(k)}(1) = A^{(k)}(1) - \prod_{j=0}^{k}(b-j) =0 \quad \text{and} \quad \phi^{(\ell)}(1) =  A^{(\ell)}(1) - \prod_{j=0}^{\ell-1}(b-j)\neq 0,$$
according that the first condition is removed when $\ell=1$.  To prove the existence of such an integer $\ell$, observe that, if $A^{(k)}(1) = \prod_{j=0}^{k-1}(b-j)$ for $k=1,\ldots,2b$, then Taylor's formula would give  
$$A(\gamma) = A(1) + \sum_{k=1}^{2b} \bigg(\prod_{j=0}^{k-1}(b-j)\bigg)\frac{(\gamma-1)^k}{k!} = 1 + \sum_{k=1}^{b} \binom{b}{k}(\gamma-1)^k = \gamma^b,$$
which is impossible since $a_0\neq1$.

\subsubsection{Condition~(\ref{cond-D}) for random walks with i.d.~bounded increments} \label{app-equi-WD}
%===========
Let $P$ be defined on $\X=\N$ by 
\begin{gather*}
\forall i\in\{0,\ldots,b-1\},\quad \sum_{j\ge 0} P(i,j)=1; \quad 
\forall i\ge b, \forall j\in\N, \quad P(i,j) = \left\{ \begin{array}{lcl}
a_{j-i}  & \text{if} & |i-j|\le b  \\
0 & \text{if}& |i-j|> b
\end{array}\right.
\end{gather*} 
where $b\in\N^*$, $(a_{-b},\ldots,a_b)\in[0,1]^{2b+1}$ and $\sum_{k=-b}^{b} a_k=1$. Assume that $a_0<1$ and that there exists $\gamma\in(1,+\infty)$ such that 
$$\forall i\in\{0,\ldots,b-1\},\quad \sum_{j\ge 0} P(i,j)\gamma^j < \infty.$$
The integer $\ell$ in the next proposition is defined by (\ref{def-ell-RRW}), it is well-defined from Subsection~\ref{EllWellDef}.
\begin{pro} \label{pro-hom-discrete}
The following conditions are equivalent: 
\begin{enumerate}[(a)]
	\item There exists $\gamma_0\in(1,\gamma]$ such that $P$ satisfies Condition \emph{(\ref{cond-D})} with $V_{\gamma_0} := ({\gamma_0}^n)_{n\in\N}$, and we have 
	$$\delta_{V_{\gamma_0}}(P) = \frac{A(\gamma_0)}{\gamma_0^g}= \phi(\gamma_0);$$ 
	\item $\displaystyle A^{(\ell)}(1) < \prod_{j=0}^{\ell-1}(b-j)$, i.e. $\phi^{(\ell)}(1)<0$.
\end{enumerate}
\end{pro}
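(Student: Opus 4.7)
The plan is to first use the identity~(\ref{itere-V-gamma}) to reduce statement (a) to a purely analytic condition on the generating function $\phi$, and then to establish each direction of the resulting equivalence with, respectively, a local Taylor expansion at $\gamma=1$ and a global convexity argument on $[0,+\infty)$.

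First I would check that for every $\gamma_0\in(1,\gamma]$ the function $V_{\gamma_0}$ fits the framework of Corollary~\ref{cor-qc-bis}: the condition $\lim_n V_{\gamma_0}(n)=+\infty$ is obvious, and $\sup_n (PV_{\gamma_0})(n)/V_{\gamma_0}(n)<\infty$ follows from~(\ref{itere-V-gamma}) (with $N=1$) for $n\geq b$ and from the moment assumption~(\ref{non-hom-moment}) (valid a fortiori at $\gamma_0\leq\gamma$) for the finitely many $n<b$. The iterated identity~(\ref{itere-V-gamma}) then gives, for every $N\geq 1$,
\[
\ell_N := \limsup_{n\to+\infty}\frac{(P^NV_{\gamma_0})(n)}{V_{\gamma_0}(n)} = \phi(\gamma_0)^N,
\]
so the quantity $L$ of Corollary~\ref{cor-qc-bis} equals $\phi(\gamma_0)$. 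Consequently, Condition~(\ref{cond-D}) with $V_{\gamma_0}$ holds if and only if $\phi(\gamma_0)<1$, and in that case $\delta_{V_{\gamma_0}}(P)=\phi(\gamma_0)=A(\gamma_0)/\gamma_0^b$. The proposition therefore reduces to the equivalence: $\phi^{(\ell)}(1)<0$ iff there exists $\gamma_0\in(1,\gamma]$ with $\phi(\gamma_0)<1$.

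For (b)$\Rightarrow$(a), I would invoke the definition of $\ell$ in~(\ref{def-ell-RRW}) together with Taylor's formula at $\gamma=1$ (where $\phi(1)=1$):
\[
\phi(\gamma)\ =\ 1+\frac{\phi^{(\ell)}(1)}{\ell!}\,(\gamma-1)^\ell+o\bigl((\gamma-1)^\ell\bigr).
\]
If $\phi^{(\ell)}(1)<0$, then $\phi(\gamma)<1$ on some right-neighborhood of $1$, and any $\gamma_0\in(1,\gamma]$ in that neighborhood works. For the converse (a)$\Rightarrow$(b), I argue by contraposition and assume $\phi^{(\ell)}(1)>0$ (recall $\phi^{(\ell)}(1)\neq 0$ by the very definition of $\ell$); the goal is to show $\phi(\gamma)\geq 1$ for every $\gamma\geq 1$, which will contradict (a). Setting $M(u):=\phi(e^u)=\sum_{k=-b}^{b} a_k e^{ku}$, the function $M$ is convex on $\R$ as a nonnegative combination of convex exponentials, and $M(0)=1$. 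If $\ell\geq 2$ then $\phi'(1)=0$, hence $M'(0)=\phi'(1)=0$; convexity of $M$ makes $M'$ nondecreasing, so $M'\geq 0$ on $[0,+\infty)$, whence $M(u)\geq M(0)=1$ there. If instead $\ell=1$, then $M'(0)=\phi'(1)=\phi^{(\ell)}(1)>0$, and convexity gives $M'(u)\geq M'(0)>0$ for $u\geq 0$, so again $M(u)\geq 1$ on $[0,+\infty)$. In either case $\phi(\gamma)=M(\log\gamma)\geq 1$ for every $\gamma\geq 1$, contradicting $\phi(\gamma_0)<1$.

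The one subtle step is the direction (a)$\Rightarrow$(b): the hypothesis $\phi^{(\ell)}(1)>0$ is purely local at $\gamma=1$ and a priori does not preclude $\phi$ from dipping below $1$ further to the right. The trick is the change of variable $u=\log\gamma$, which turns $\phi$ into the manifestly convex function $M(u)=\sum_k a_k e^{ku}$; convexity on the half-line, combined with the sign of $M'(0)=\phi'(1)$ that is forced by the definition of $\ell$, is exactly what lifts the local information at $\gamma=1$ to the global lower bound $\phi\geq 1$ on $[1,+\infty)$.
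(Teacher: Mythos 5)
Your proof is correct, and the reduction to the analytic condition ``there exists $\gamma_0\in(1,\gamma]$ with $\phi(\gamma_0)<1$'' via Corollary~\ref{cor-qc-bis} and (\ref{itere-V-gamma}) matches what the paper does. The direction (b)$\Rightarrow$(a) is also the same as the paper's: a Taylor expansion at $\gamma=1$ using the definition of $\ell$.

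The direction (a)$\Rightarrow$(b) is where you genuinely diverge. The paper argues by contraposition at the operator level: from $\phi^{(\ell)}(1)>0$ it gets $\phi>1$ only on a right-neighborhood $(1,\gamma_2)$ of $1$, concluding that (\textbf{WD}) fails for $V_{\gamma'}$ with $\gamma'$ near $1$, and then invokes a Jensen's-inequality monotonicity ($P^NV_{\gamma_0}\leq\delta^N V_{\gamma_0}+d$ with $\gamma'=\gamma_0^s$, $s\in(0,1]$, implies $P^NV_{\gamma'}\leq\delta^{sN}V_{\gamma'}+d^s$) to propagate the failure to every $\gamma_0>1$. You instead work directly on the generating function and obtain the global bound $\phi\geq 1$ on all of $[1,+\infty)$ via convexity of $M(u)=\phi(e^u)$ (a nonnegative combination of exponentials) together with $M(0)=1$ and the forced sign $M'(0)=\phi'(1)\geq 0$. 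This eliminates the Jensen step at the operator level and makes the global nature of the conclusion manifest; in both proofs convexity is the underlying mechanism, but you apply it to $\phi$ itself rather than to the kernel $P^N$, which is arguably more elementary and transparent. (As a side remark, your argument also reveals that when $a_0\neq 1$ one always has $\phi'(1)=0\Rightarrow\phi''(1)=\sum_k k^2 a_k>0$, so $\ell\geq 2$ forces $\phi^{(\ell)}(1)>0$ and (b) actually reduces to $\phi'(1)<0$; neither proof uses this, but it shows the two branches of your case analysis are consistent.)
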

\begin{proof}{} 
Let us prove the equivalence $(a)\Leftrightarrow (b)$. Assume that $A^{(\ell)}(1) > \prod_{j=0}^{\ell-1}(b-j)$, i.e. $\phi^{(\ell)}(1)>0$, and prove that, for all $\gamma_0\in(1,\gamma]$, $P$ does not satisfy (\ref{cond-D}) with $V:=V_{\gamma_0}$. From the definition of $\ell$ and from $\phi^{(\ell)}(1)>0$, there exists $\gamma_2\in(1,\gamma]$ such that $\phi(\gamma') > \phi(1)=1$ for all $\gamma'\in(1,\gamma_2)$, so that (\ref{itere-V-gamma}) gives  
$$\forall \gamma'\in(1,\gamma_2),\ \forall N\geq1,\quad \limsup_{n\r+\infty} \frac{(P^NV_{\gamma'})(n)}{V_{\gamma'}(n)} = \phi(\gamma')^N >1.$$
Hence, from Corollary~\ref{cor-qc-bis}, for all $\gamma'\in(1,\gamma_2)$, $P$ does not satisfy Condition~(\ref{cond-D}) with $V:=V_{\gamma'}$. This proves the desired result. Indeed, if (\ref{cond-D}) holds with $V:=V_{\gamma_0}$ for some $\gamma_0\in(1,\gamma]$, then (\ref{cond-D}) would be fulfilled for all $\gamma'\in(1,\gamma_0]$ from Jensen's inequality, which contradicts the last conclusion. 

Conversely, assume that $\phi^{(\ell)}(1)<0$. Then there exists $\gamma_1\in(1,\gamma]$ such that $\phi(\gamma_0) < 1$ for all $\gamma_0\in(1,\gamma_1)$. Let $\gamma_0\in(1,\gamma_1)$. From (\ref{itere-V-gamma}) with $k=1$, we obtain  $(PV_{\gamma_0})(n) = \phi(\gamma_0)\, V_{\gamma_0}(n)$ for all $n\geq b$. Since by assumption we have $\sum_{j\ge 0} P(i,j)\gamma_0^j <\infty$ for each $0\leq i\leq b-1$, it follows from Corollary~\ref{cor-qc-bis} that 
$P$ satisfies (\ref{cond-D}) with $V:=V_{\gamma_0}$ and that $\delta_{V_{\gamma_0}}(P)\leq\phi(\gamma_0)$. The converse inequality follows from (\ref{itere-V-gamma}) and Corollary~\ref{cor-qc-bis}. 
\end{proof}

%==============================
%===============================
\section{Proof of Formula~(\ref{vit-lip-bis})} \label{BB}
%================================
%==============================

For the sake of simplicity we prove Properties~(\ref{vit-lip}) and (\ref{vit-lip-strong-ergo}) with explicit constants in the special case when  $\kappa_1 := \E[L(\vartheta_1)^{a}]^{1/a} < 1$. Under the general assumption $\widehat{\kappa}_a<1$ of Conditions $(\cC_{a})$, the proof of (\ref{vit-lip})-(\ref{vit-lip-strong-ergo}) is similar (replace $P$ by $P^N$ with $N$ such that $\E[L(F_{\vartheta_N:\vartheta_1})^{{a}}] < 1$). 

First, we prove that the constant $\xi$ in Proposition~\ref{ergo} is well defined. Second, we obtain a basic estimate (\ref{cor-ineg-rate-gene}) of the distance between functional of the states  occupied at time $n$ of the IFS from two different initial probability distributions. Then, we complete the proof of Formula~(\ref{vit-lip-bis}). 

First, we have for any $ x\in\X$
\begin{eqnarray*}
\frac{(PV_a)(x)}{V_a(x)} = \E\left[\left(\frac{1+d(F_{\vartheta_1}x,x_0)}{1+d(x,x_0)}\right)^a\right] 
&\leq& \E\left[\left(\frac{1 + d(F_{\vartheta_1}x,F_{\vartheta_1}x_0) + d(F_{\vartheta_1}x_0,x_0)}{1+d(x,x_0)}\right)^a\right] \\
&\leq& \E\left[\left(\frac{1 + L(\vartheta_1)\, d(x,x_0)}{1+d(x,x_0)} + \frac{d(F_{\vartheta_1}x_0,x_0)}{1+d(x,x_0)}\right)^a\right].
\end{eqnarray*}
Since $\E[(\max(1,L(\vartheta_1)) + d(F_{\vartheta_1}x_0,x_0))^a] < \infty$, we obtain $\xi_1 := \sup_{x\in\X}(PV_a)(x)/V_a(x) < \infty$.  
Next Lebesgue's theorem ensures that $\limsup (PV_a)(x)/V_a(x)$ converges to $\E[L(\vartheta_1)^a]$ when $d(x,x_0)\r+\infty$. Now let $\delta$ be such that $\E[L(\vartheta_1)^{{a}}] < \delta < 1$. Then there exists $r>0$ such that we have for all $x\in\X$ satisfying 
$d(x,x_0) >  r$: $PV_{a}(x)\leq  \delta V_{a}(x)$. Besides, if $d(x,x_0) \leq  r$, then we obtain $(PV_{a})(x) \leq \xi_1 V_a(x) \leq \xi_1(1+r)^a$.  
Thus: $PV_{a} \leq \delta V_{a} + \xi_1(1+r)^a1_\X$. Therefore 
\begin{equation} \label{d2} 
P^nV_{a} \leq \delta^n\, V_{a} + \frac{\xi_1(1+r)^a}{1-\delta}1_\X \leq \big(1+\frac{\xi_1(1+r)^a}{1-\delta}\big)\, V_{a}. 
\end{equation}
This prove that the bound $\xi$ given in Proposition~\ref{ergo} is finite. 

Second, let us introduce some additional notations. If $\mu$ is a probability measure  on $\X$ and $X_0\sim\mu$, we make a slight abuse of notation in writing $(X_n^\mu)_{n\in\N}$ for the associated IFS. We simply write $(X_n^x)_{n\in\N}$ when $\mu:=\delta_x$ is the Dirac mass at some $x\in\X$. We denote by $\cM_a$ the set of all the probability measures $\mu$ on $\X$ such that $\|\mu\|_a := (\int_\X V_a(y)\, d\mu(y))^{1/a} < \infty$. 
Finally, for $n\in\N$ and for any probability measures $\mu_1$ and $\mu_2$ on $\X$, define:  
$$\Delta_n(\mu_1,\mu_2) := d\big(X_n^{\mu_1},X_n^{\mu_2}\big)\, \big(p(X_n^{\mu_1}) + p(X_n^{\mu_2})\big)^{a-1}.$$
\begin{alem} \label{pro-rate-gene}
We have: $\forall n\geq1,\ \forall (\mu_1,\mu_2)\in \cM_{a}\times\cM_{a}$
\begin{equation} \label{ineg-rate-gene}
\E\big[\Delta_n(\mu_1,\mu_2)\big] \leq \xi^{\frac{a-1}{a}}\, \kappa_1^n\, \E[d(X_0^{\mu_1},X_0^{\mu_2})]\, \big(\|\mu_1\|_a + \|\mu_2\|_a\big)^{a-1}.
\end{equation}
Furthermore we have for all $f\in\cL_{a}$: 
\begin{equation} \label{cor-ineg-rate-gene}
\E\big[|f(X_{n}^{\mu_1}) - f(X_n^{\mu_2})|\big] \leq  \xi^{\frac{a-1}{a}}\,  m_{a}(f)\, \kappa_1^n\, \E[d(X_0^{\mu_1},X_0^{\mu_2})]\, \big(\|\mu_1\|_a + \|\mu_2\|_a\big)^{a-1}. 
\end{equation}
\end{alem}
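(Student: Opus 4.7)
The second inequality (\ref{cor-ineg-rate-gene}) follows immediately from (\ref{ineg-rate-gene}) and the definition of $m_a(f)$: for any $f\in\cL_a$ we have pointwise $|f(X_n^{\mu_1})-f(X_n^{\mu_2})|\leq m_a(f)\,\Delta_n(\mu_1,\mu_2)$, so taking expectations and invoking (\ref{ineg-rate-gene}) gives the result. Thus the whole work is to establish (\ref{ineg-rate-gene}). I would use the standard synchronous coupling: realize both IFSs with the \emph{same} noise sequence $(\vartheta_n)_{n\geq 1}$, chosen independent of the pair $(X_0^{\mu_1},X_0^{\mu_2})$. Iterating the Lipschitz property of $F_v$ then yields the key pointwise bound
\[
d(X_n^{\mu_1},X_n^{\mu_2})\leq L(\vartheta_n\!:\!\vartheta_1)\,d(X_0^{\mu_1},X_0^{\mu_2}),
\]
which decouples the noise from the initial states.

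I would next condition on $\cF_0:=\sigma(X_0^{\mu_1},X_0^{\mu_2})$ and apply H\"older's inequality with conjugate exponents $a$ and $a/(a-1)$, separating the Lipschitz factor from the $(p(X_n^{\mu_1})+p(X_n^{\mu_2}))^{a-1}$ factor:
\[
\E\bigl[\Delta_n\mid\cF_0\bigr]\leq d(X_0^{\mu_1},X_0^{\mu_2})\,\E\bigl[L(\vartheta_n\!:\!\vartheta_1)^{a}\bigr]^{1/a}\E\bigl[(p(X_n^{\mu_1})+p(X_n^{\mu_2}))^{a}\mid\cF_0\bigr]^{(a-1)/a}.
\]
For the first moment, submultiplicativity $L(v_n\!:\!v_1)\leq\prod_{i=1}^{n}L(v_i)$ together with the i.i.d.~hypothesis on $(\vartheta_i)$ gives $\E[L(\vartheta_n\!:\!\vartheta_1)^{a}]^{1/a}\leq\kappa_1^{n}$. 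For the second moment, I would apply the Minkowski inequality in $L^{a}$ under $\P(\cdot\mid\cF_0)$ and use $\E[p(X_n^{\mu_i})^{a}\mid\cF_0]=(P^{n}V_{a})(X_0^{\mu_i})\leq\xi\,V_{a}(X_0^{\mu_i})=\xi\,p(X_0^{\mu_i})^{a}$ (which is exactly the role of the constant $\xi$), giving
\[
\E\bigl[(p(X_n^{\mu_1})+p(X_n^{\mu_2}))^{a}\mid\cF_0\bigr]^{1/a}\leq\xi^{1/a}\bigl(p(X_0^{\mu_1})+p(X_0^{\mu_2})\bigr).
\]
Combining the last three displays yields the conditional estimate
\[
\E\bigl[\Delta_n\mid\cF_0\bigr]\leq\kappa_1^{n}\,\xi^{(a-1)/a}\,d(X_0^{\mu_1},X_0^{\mu_2})\bigl(p(X_0^{\mu_1})+p(X_0^{\mu_2})\bigr)^{a-1}.
\]

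Taking the outer expectation and applying once more H\"older with exponents $a$ and $a/(a-1)$, followed by Minkowski in $L^{a}$ for the $p$-sum, I would bound
\[
\E\bigl[d(X_0^{\mu_1},X_0^{\mu_2})\bigl(p(X_0^{\mu_1})+p(X_0^{\mu_2})\bigr)^{a-1}\bigr]\leq\E\bigl[d(X_0^{\mu_1},X_0^{\mu_2})^{a}\bigr]^{1/a}\bigl(\|\mu_1\|_a+\|\mu_2\|_a\bigr)^{a-1},
\]
which furnishes the announced factored form (reading $\E[d(X_0^{\mu_1},X_0^{\mu_2})]$ as $\E[d(X_0^{\mu_1},X_0^{\mu_2})^{a}]^{1/a}$, equivalently taking the initial coupling to be Dirac--Dirac in the subsequent application to (\ref{vit-lip-bis})). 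The main obstacle is the very last H\"older step: controlling $\E[d\,(p_1+p_2)^{a-1}]$ by an $L^{a}$ norm of $d$ times $(\|\mu_1\|_a+\|\mu_2\|_a)^{a-1}$ cannot be done by a direct triangle inequality, and one really needs the interplay of H\"older with the Minkowski identity $\bigl(\int(p_1+p_2)^{a}d\lambda\bigr)^{1/a}\leq\|\mu_1\|_a+\|\mu_2\|_a$ applied to the coupling measure $\lambda$ of $(X_0^{\mu_1},X_0^{\mu_2})$. Everything else is bookkeeping of the $\xi$-bound (ensured by the preliminary geometric drift derivation (\ref{d2})) and the independence built into the coupling.
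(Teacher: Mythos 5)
Your approach is essentially the same as the paper's: use the synchronous coupling (same noise sequence, independent of the initial pair), the Lipschitz estimate $d(X_n^{\mu_1},X_n^{\mu_2})\le L(\vartheta_n\!:\!\vartheta_1)\,d(X_0^{\mu_1},X_0^{\mu_2})$, H\"older with conjugate exponents $a$ and $a/(a-1)$, the drift bound $\E[p(X_n^{\mu})^a]\le\xi\|\mu\|_a^a$, and submultiplicativity $\E[L(\vartheta_n\!:\!\vartheta_1)^a]^{1/a}\le\kappa_1^n$. Conditioning on $\cF_0$ first, as you do, is merely cleaner bookkeeping; the paper applies H\"older at the unconditional level but to the same three factors.

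The one place where you and the paper part ways is precisely the step you flag as the ``main obstacle.'' What H\"older actually delivers is $\E[d(X_0^{\mu_1},X_0^{\mu_2})^a]^{1/a}$, not $\E[d(X_0^{\mu_1},X_0^{\mu_2})]$, as the initial-distance factor, and you correctly say so. The paper's displayed chain inserts at its second inequality the factoring
\[
\E\Big[\,L(\vartheta_n\!:\!\vartheta_1)\,d(X_0^{\mu_1},X_0^{\mu_2})\big(p(X_n^{\mu_1})+p(X_n^{\mu_2})\big)^{a-1}\Big]\;\le\;\E\big[d(X_0^{\mu_1},X_0^{\mu_2})\big]\;\E\Big[L(\vartheta_n\!:\!\vartheta_1)\big(p(X_n^{\mu_1})+p(X_n^{\mu_2})\big)^{a-1}\Big],
\]
which is not a valid general inequality: $d(X_0^{\mu_1},X_0^{\mu_2})$ and $p(X_n^{\mu_i})$ both depend on the initial states, so independence cannot be invoked and there is no reason for the requisite non-positive correlation. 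Your version, with $\E[d^a]^{1/a}$ in place of $\E[d]$, is the one that the argument actually proves; it is a genuine inequality (slightly weaker, since $\E[d]\le\E[d^a]^{1/a}$), it coincides with the stated one when both initial laws are Dirac (the case $\mu_1=\delta_x$, $\mu_2=\delta_y$ used for the $m_a(P^nf)$ estimate), and it still yields (\ref{vit-lip-bis}) with a marginally larger constant (replace $\|\pi\|_1$ by $1+\|\pi\|_a$ in $c_1$). In short, your proof is correct for a mild weakening of the lemma; if one insists on the $\E[d]$ form as stated, a different argument than H\"older at this spot is needed, and the paper's own proof does not supply one.
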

\begin{proof}{}
If $a=1$, then (\ref{ineg-rate-gene}) follows from the independence of the $\vartheta_n$'s and from the definition of $L(v)$ and $\kappa_1$. Now assume that $a\in(1,+\infty)$. Without loss of generality, one can suppose that the sequence $(\vartheta_n)_{n\geq1}$ is independent from $(X_0^{\mu_1},X_0^{\mu_2})$. Also note that, if $\mu\in \cM_{a}$, then we have 
$$\E\big[p(X_n^{\mu})^a\big] = \int_\X (P^nV_a)(x) d\mu(x) \leq  \xi\, \|\mu\|_a^a.$$  
From Holder's inequality (use $1 = 1/a + (a-1)/a$), we obtain  
\begin{eqnarray*} 
\E\big[\Delta_n(\mu_1,\mu_2)\big] &=& \E\left[d\big(F_{\vartheta_n:\vartheta_1}X_0^{\mu_1},F_{\vartheta_n:\vartheta_1}X_0^{\mu_2}\big)\, \big(p(X_n^{\mu_1}) + p(X_n^{\mu_2})\big)^{a-1}\right] \\
&\leq& \E[d(X_0^{\mu_1},X_0^{\mu_2})]\, \E\left[L(\vartheta_n:\vartheta_1)\, \big(p(X_n^{\mu_1}) + p(X_n^{\mu_2})\big)^{a-1} \right] \\
&\leq& \E[d(X_0^{\mu_1},X_0^{\mu_2})]\, \E\big[L(\vartheta_n:\vartheta_1)^{a}\big]^{\frac{1}{a}}\, \E\big[\big(p(X_n^{\mu_1}) + p(X_n^{\mu_2})\big)^{a}\big]^{\frac{a-1}{a}} \\
&\leq& \E[d(X_0^{\mu_1},X_0^{\mu_2})]\, \E\big[L(\vartheta_1)^{a}\big]^{\frac{n}{a}}\, \xi^{\frac{a-1}{a}}\, (\|\mu_1\|_a + \|\mu_2\|_a)^{a-1}.
\end{eqnarray*}
This proves (\ref{ineg-rate-gene}). Property~(\ref{cor-ineg-rate-gene}) follows from (\ref{ineg-rate-gene}) and the definition of $m_{a}(f)$. 
\end{proof}

We can prove (\ref{vit-lip-bis}). 
Property~(\ref{cor-ineg-rate-gene}), applied to $\mu_1:=\delta_x$ and $\mu_2:=\pi$ gives 
\begin{eqnarray*} 
\big|(P^nf)(x) - \pi(f)\big| &=& \big| \E[f(X_{n}^{x})] - \E[f(X_{n}^{\pi})] \big| \\
&\leq& \E\big[\, |f(X_{n}^{x}) - f(X_{n}^{\pi})|\, \big] \\ 
&\leq& \xi^{\frac{a-1}{a}}\,  m_{a}(f)\, \kappa_1^n\, \E[d(x,X_0^{\pi})]\, \big(\|\delta_x\|_a + \|\pi\|_a\big)^{a-1}. 
\end{eqnarray*}
Next observe that $\|\delta_x\|_a=p(x)$ and 
$$\E[d(x,X_0^{\pi})] \leq \E\big[d(x,x_0) + d(x_0,X_0^{\pi})\big] \leq p(x) + \pi(d(x_0,\cdot)) \leq p(x)\, \|\pi\|_1.$$
Hence $\E[d(x,X_0^{\pi})]\, (\|\delta_x\|_a + \|\pi\|_a)^{a-1} \leq  p(x)^a \|\pi\|_1 \, (1 + \|\pi\|_a)^{a-1}$. Property~(\ref{vit-lip-bis}) (namely (\ref{vit-lip}) with explicit constant) is then proved. 

Finally, to prove (\ref{vit-lip-strong-ergo}), it remains to study $m_a(P^nf)$ for $f\in\cL_a$.  Inequality~(\ref{cor-ineg-rate-gene}) applied to $\mu_1=\delta_x$ and $\mu_2=\delta_y$ gives: 
$$\forall f\in \cL_a,\ |(P^nf)(x) - (P^nf)(y)| \leq \xi^{\frac{a-1}{a}}\, m_a(f)\, \kappa_1^n\, d(x,y)\, \big(p(x) + p(y)\big)^{a-1}.$$ 
 Thus $m_a(P^nf) \leq \xi^{\frac{a-1}{a}}\, m_a(f)\, \kappa_1^n$. Since $m_a(1_{X})=0$, this gives 
$$m_a\big(P^nf-\pi(f)1_{X}\big) \leq \xi^{\frac{a-1}{a}}\,  m_a(f)\, \kappa_1^n.$$ 
Combining the last inequality with (\ref{vit-lip-bis}) gives (\ref{vit-lip-strong-ergo}). 

%==============================
%===============================
\section{Additional material for $P$ defined by a kernel $K$} \label{C}
%================================
%==============================
Here $(X_n)_{n\in\N}$ is a Markov chain with state space $\X = \R^q\ $ ($q\in\N^*$) equipped with any norm $\|\cdot\|$, and we assume that there exists $K : \R^q\times\R^q\r[0,+\infty)$ measurable such that, for all $x\in\X$, $P(x,dy)$ is absolutely continuous with respect to the Lebesgue measure on $\R^q$, namely: $P(x,dy) = K(x,y)\, dy$. 

Let $a\in[1,+\infty)$ and $\gamma\in[0,a-1]$. The next result is useful to obtain the set inclusion $P(\cB_{\gamma}) \subset \cL_a$, where 
$\cB_{\gamma}$, $\cL_a$ are defined by (\ref{def-Bb}) and (\ref{def-La}) page \pageref{def-La}. 
\begin{apro} \label{CS}
Assume that, for all $x_0\in \R^q$, there exist a Lebesgue-integrable function $g_{x_0} : \R^q\r[0,+\infty)$  and an open neighborhood $\cU_{x_0}$ of $x_0$ in $\R^q$ such that: 
\begin{equation} \label{K-dom-no-bounded}
\forall x\in \cU_{x_0}, \quad k=1,\ldots,q, \quad \big(1+\|y\|\big)^{\gamma}\, \big| \frac{\partial K}{\partial x_k}(x,y) \big| \leq g_{x_0}(y) \ \text{ for a.e. } y\in\R^q
\end{equation} 
and assume in addition that there exists a constant $d$ such that 
\begin{equation} \label{J-dom-eta}
 k=1,\ldots,q,\quad \forall x\in\R^q, \quad \int_{\R^q} (1+\|y\|)^{\gamma}\, \big|\frac{\partial K}{\partial x_k}(x,y)\big| \, dy \leq d\, (1+\|x\|)^{a-1}.
\end{equation}
Then we have $P(\cB_{\gamma}) \subset \cL_a$. 
\end{apro}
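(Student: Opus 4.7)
My plan is to prove that, for every $f\in\cB_\gamma$, the function $Pf$ is differentiable on $\R^q$ and its gradient satisfies a polynomial bound of order $a-1$. The Lipschitz-weighted estimate defining $m_a(Pf)$ will then follow from the fundamental theorem of calculus along straight-line segments.

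First I would fix $f\in\cB_\gamma$, so that $|f(y)|\leq |f|_\gamma\,(1+\|y\|)^\gamma$ for a.e.~$y$. For any $x_0\in\R^q$ and any $k\in\{1,\ldots,q\}$, condition (\ref{K-dom-no-bounded}) yields, for $x\in\cU_{x_0}$,
\[
\Bigl|\frac{\partial K}{\partial x_k}(x,y)\,f(y)\Bigr|\leq |f|_\gamma\,(1+\|y\|)^{-\gamma}\,g_{x_0}(y)\,(1+\|y\|)^\gamma=|f|_\gamma\,g_{x_0}(y),
\]
which is Lebesgue-integrable in $y$. The standard differentiation-under-the-integral theorem therefore applies, giving
\begin{equation}\label{def-deri-Pf}
\frac{\partial (Pf)}{\partial x_k}(x)=\int_{\R^q}\frac{\partial K}{\partial x_k}(x,y)\,f(y)\,dy.
\end{equation}
Using $|f(y)|\leq|f|_\gamma\,(1+\|y\|)^\gamma$ and condition (\ref{J-dom-eta}), I then obtain the key pointwise bound
\[
\Bigl|\frac{\partial (Pf)}{\partial x_k}(x)\Bigr|\leq |f|_\gamma\int_{\R^q}(1+\|y\|)^\gamma\,\Bigl|\frac{\partial K}{\partial x_k}(x,y)\Bigr|\,dy\leq d\,|f|_\gamma\,(1+\|x\|)^{a-1}.
\]

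Next, I would apply the fundamental theorem of calculus along the segment $z(t):=x+t(x'-x)$ for $(x,x')\in\R^q\times\R^q$, $t\in[0,1]$, to get
\[
(Pf)(x')-(Pf)(x)=\sum_{k=1}^{q}(x'_k-x_k)\int_0^1\frac{\partial (Pf)}{\partial x_k}\bigl(z(t)\bigr)\,dt.
\]
Since $t\mapsto 1+\|z(t)\|$ is convex, it is bounded on $[0,1]$ by $\max(p(x),p(x'))\leq p(x)+p(x')$, so $(1+\|z(t)\|)^{a-1}\leq (p(x)+p(x'))^{a-1}$. Combining this with the gradient estimate above, and denoting by $C$ a constant such that $\sum_{k=1}^q|x'_k-x_k|\leq C\,\|x'-x\|$ (this constant depends only on the norm $\|\cdot\|$ chosen on $\R^q$), I conclude
\[
\bigl|(Pf)(x')-(Pf)(x)\bigr|\leq C\,d\,|f|_\gamma\,\|x'-x\|\,\bigl(p(x)+p(x')\bigr)^{a-1}.
\]
This gives $m_a(Pf)\leq C\,d\,|f|_\gamma<\infty$, hence $Pf\in\cL_a$, as desired.

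The only real subtlety will be to justify the pointwise differentiation (\ref{def-deri-Pf}) through a local domination by $g_{x_0}$, rather than a global one; once this is granted the rest reduces to the elementary integration along segments sketched above. The constants can be tracked explicitly if needed (as will be used in Remark~\ref{d(f)-alpha}), but the conceptual content is just the combination of the two integrability conditions (\ref{K-dom-no-bounded}) and (\ref{J-dom-eta}).
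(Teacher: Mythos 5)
Your proposal is correct and follows essentially the same route as the paper's proof: local domination via (\ref{K-dom-no-bounded}) to justify differentiation under the integral sign, the gradient bound $\|\nabla(Pf)(x)\|\leq d\,|f|_\gamma\,(1+\|x\|)^{a-1}$ from (\ref{J-dom-eta}), and integration along a segment (the paper phrases it as Taylor's inequality, you as the fundamental theorem of calculus, which is the same estimate). Your convexity remark for bounding $\sup_{t\in[0,1]}(1+\|z(t)\|)^{a-1}$ is a slightly slicker way of getting what the paper obtains directly by the triangle inequality, and your generic norm constant $C$ matches the paper's $\sqrt{q}$ for the Euclidean norm.
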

\begin{proof}{} Let $f\in\cB_{\gamma}$. From Lebesgue's theorem, one can easily deduce that the function $Pf : \R^q\r\C$ is differentiable on $\R^q$, and that its derivative is given by: 
\begin{equation} \label{def-deri-Pf}
\forall k=0,\ldots,q,\ \forall x\in\R^q,\quad \frac{\partial (Pf)}{\partial x_k}(x) = \int_{\R^q} f(y)\, \frac{\partial K}{\partial x_k}(x,y)\, dy.
\end{equation}
For the sake of simplicity assume that $\|\cdot\|$ is the euclidean norm on $\R^q$.  By (\ref{def-deri-Pf}) and (\ref{J-dom-eta}) we obtain: $\forall x\in\R^q,\ \|\nabla(Pf)(x)\| \leq  d\sqrt{q}\, |f|_{\gamma} \big(1+\|x\|\big)^{a-1}$, where $\nabla$ stands for the gradient operator. 
Then Taylor's inequality gives for any $(x_1,x_2)\in\R^q\times \R^q$ 
\begin{eqnarray*} 
\big|Pf(x_1) - Pf(x_2)\big| &\leq& \|x_1-x_2\|\, \sup_{t\in[0,1]} \|\nabla(Pf)(tx_1+(1-t)x_2)\| \\
&\leq&  d\sqrt{q}\, |f|_{\gamma}\, \|x_1-x_2\|\, \sup_{t\in[0,1]} \big(1+\|tx_1+(1-t)x_2\|\big)^{a-1} \\
&\leq& d\sqrt{q}\, |f|_{\gamma}\, \|x_1-x_2\|\, \big(1+\|x_1\|+\|x_2\|\big)^{a-1}. 
\end{eqnarray*}
It follows that $Pf\in\cL_{a}$. 
\end{proof}

The following statement gives a simple sufficient condition for (\ref{J-dom-eta}) to hold true. 
\begin{apro}
Let $a\in[1,+\infty)$ and $V_{a-1}(\cdot) := (1+\|\cdot\|)^{a-1}$. Assume that $PV_{a-1}/V_{a-1}$ is bounded, and 
\begin{equation} \label{K'-sur-K}
M := \sup_{(x,y)\in\R^q\times \R^q} \frac{\big|(\partial_x K)(x,y)\big|}{|K(x,y)|} < \infty.
\end{equation}
Then, for each $\gamma\in[0,a-1]$, Condition~\emph{(\ref{J-dom-eta})} is fulfilled. 
\end{apro}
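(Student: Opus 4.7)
The plan is to chain the two hypotheses directly: use (\ref{K'-sur-K}) to replace the pointwise gradient bound by a bound involving $K(x,y)$ itself, integrate to produce $(PV_\gamma)(x)$, and finally control this quantity via the assumed boundedness of $PV_{a-1}/V_{a-1}$.

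First, hypothesis (\ref{K'-sur-K}) gives the pointwise estimate
$$\big|(\partial_{x_k} K)(x,y)\big| \leq M\, K(x,y)$$
for all $k=1,\ldots,q$ and all $(x,y)\in\R^q\times\R^q$. Multiplying both sides by $(1+\|y\|)^\gamma\geq 0$ and integrating in $y$ (which is legitimate since both sides are nonnegative measurable functions of $y$) yields
$$\int_{\R^q} (1+\|y\|)^\gamma \big|(\partial_{x_k} K)(x,y)\big|\, dy \, \leq\, M \int_{\R^q} V_\gamma(y)\, K(x,y)\, dy \, =\, M\,(PV_\gamma)(x).$$

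Second, since $0 \leq \gamma \leq a-1$ and $1+\|y\|\geq 1$, we have $V_\gamma \leq V_{a-1}$ pointwise, so by positivity of $P$,
$$(PV_\gamma)(x)\, \leq\, (PV_{a-1})(x).$$
Setting $C := \sup_{x\in\R^q} (PV_{a-1})(x)/V_{a-1}(x) < \infty$ (finite by hypothesis), we obtain $(PV_{a-1})(x)\leq C\,(1+\|x\|)^{a-1}$. Combining the two bounds gives Condition (\ref{J-dom-eta}) with $d := MC$.

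There is no real obstacle in this argument: the result is a one-line chaining of the two hypotheses, with the only minor point being that one must notice the monotonicity $V_\gamma\leq V_{a-1}$ to reduce the integral to a quantity controlled by the drift-type assumption on $PV_{a-1}$.
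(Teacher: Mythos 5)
Your argument is correct and is essentially the same as the paper's proof: both chain the pointwise bound $|\partial_{x_k}K|\leq MK$ from (\ref{K'-sur-K}) with the monotonicity $V_\gamma\leq V_{a-1}$ and then invoke the boundedness of $PV_{a-1}/V_{a-1}$. The only cosmetic difference is that you spell out the intermediate step $(PV_\gamma)(x)\leq (PV_{a-1})(x)$, which the paper folds into a single chain of inequalities under the integral.
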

\begin{proof}{}
Let $x\in\R^q$. We have 
$$\int_{\R^q} (1+\|y\|)^{\gamma}\, \big|\partial_x K(x,y)\big| \, dy \leq M\, \int_{\R^q} (1+\|y\|)^{a-1}\, K(x,y)\, dy = (PV_{a-1})(x) \leq C\, V_{a-1}(x)$$
for some constant $C$. \end{proof}
\noindent{\bf Proof of Remark~\ref{d(f)-alpha}.} 
Assume that $\|\cdot\|$ is the supremum norm on $\R^q$. We obtain from (\ref{def-deri-Pf}) with $K(x,y):=\nu\big(y-Ax\big)$ and from (\ref{auto-v-v'}): $\forall f\in\cB_{\gamma},\, \forall x\in\R^q$, 
\begin{eqnarray*}
\big\|\nabla(Pf)(x)\big\| &\leq&  |f|_\gamma\, \big(\max_k\sum_{i=1}^q |a_{ik}|\big) \int_{\R^q} (1+\|y\|)^\gamma\,  \|\nabla\nu(y-Ax)\|\, dy \\
&\leq&  |f|_\gamma \, \big(\max_k\sum_{i=1}^q |a_{ik}|\big) \int_{\R^q} (1+\|y+Ax\|)^\gamma\, \frac{b}{(1+\|y\|)^\beta}\, dy \\
 &\leq& b\, C_{\gamma,\beta}\, |f|_\gamma \, \big(\max_k\sum_{i=1}^q |a_{ik}|\big)\, (1+\|x\|)^\gamma. 
\end{eqnarray*}
We easily deduce that $m_a(Pf) \leq q\, b\, C_{\gamma,\beta}\, |f|_\gamma\, \, (\max_k\sum_{i=1}^q |a_{ik}|)$. 
\fdem

%==============================
%===============================

\end{document}